\newtheorem{theorem}{Theorem}[section]
\newtheorem{corollary}[theorem]{Corollary}
\newtheorem{lemma}[theorem]{Lemma}
\newtheorem{proposition}[theorem]{Proposition}
\theoremstyle{definition}
\newtheorem{remark}[theorem]{Remark}
\newtheorem{condition}[theorem]{Condition} 
\numberwithin{equation}{section}
\author[V. Los, V. Mikhailets, A. Murach]
{Valerii Los, Vladimir  Mikhailets, Aleksandr  Murach}
\title[Parabolic problems in generalized Sobolev spaces]
{Parabolic problems in generalized Sobolev spaces}
\address{National Technical University of Ukraine "Igor Sikorsky Kyiv Polytechnic Institute", Prospect Peremohy 37, 03056, Kyiv-56, Ukraine}
\email{v\_los@yahoo.com}
\address{Institute of Mathematics, National Academy of Sciences of Ukraine, 3 Tereshchenkivs'ka, Kyiv, 01004, Ukraine}
\email{mikhailets@imath.kiev.ua}
\address{Institute of Mathematics, National Academy of Sciences of Ukraine, 3 Tereshchenkivs'ka, Kyiv, 01004, Ukraine}
\email{murach@imath.kiev.ua}
\subjclass[2000]{Primary: 35K35; Secondary: 46E35.}
\keywords{Parabolic problem, generalized Sobolev space,
H\"ormander space, slowly varying function, isomorphism property, local regularity of solution, interpolation with function parameter.}
\thanks{The publication contains the results of studies conducted by the joint grant F81 of the National Research Fund of Ukraine and the German Research Society (DFG); competitive project F81/41686.}
\begin{document}

\maketitle

\begin{abstract}
We consider a general inhomogeneous parabolic initial-boundary value problem for a $2b$-parabolic differential equation given in a finite multidimensional cylinder. We investigate the solvability of this problem in some generalized anisotropic Sobolev spaces. They are parametrized with a pair of positive numbers $s$ and $s/(2b)$ and with a function $\varphi:[1,\infty)\to(0,\infty)$ that varies slowly at infinity. The function parameter $\varphi$ characterizes subordinate regularity of distributions with respect to the power regularity given by the number parameters. We prove that the operator corresponding to this problem is an isomorphism on appropriate pairs of these spaces. As an application, we give a theorem on the local regularity of the generalized solution to the problem. We also obtain sharp sufficient conditions under which chosen generalized derivatives of the solution are continuous on a given set.
\end{abstract}

\bigskip

\section{Introduction}\label{16sec1}

Isomorphism theorems form a core of the modern theory of parabolic initial-boundary value problems. These theorems assert that the parabolic problems are well-posed (in the sense of Hadamard) on appropriate pairs of Sobolev or H\"older anisotropic normed spaces \cite{AgranovichVishik64, DenkHieberPruss07, Friedman64, Eidelman64, Eidelman94, ZhitarashuEidelman98, LadyzhenskajaSolonnikovUraltzeva67, LionsMagenes72ii}. Otherwise speaking, the bounded operators corresponding to the mentioned problems set isomorphisms on these pairs. The isomorphism theorems play a key role in the investigations of regularity of solutions to parabolic problems, their Green functions, control problems for systems governed by parabolic equations, and others (see., e.g., \cite{Eidelman64, Eidelman94, LionsMagenes72ii, Ivasyshen90, Lunardi95}). Note that the case of Hilbert spaces is of a special interest for applications of these theorems \cite{Eidelman94, LionsMagenes72ii}). This case deals with anisotropic Sobolev spaces \cite{Slobodetskii58} based on the Lebesgue spaces of square integrable functions.

Certainly, the more finely a scale of function spaces is calibrated, the more precise results may be obtained with the help of this scale. Number parameters, which is used for Sobolev or H\"older spaces, provide a calibration that proved to be rough for various problems of mathematical analysis \cite{FarkasLeopold06, KalyabinLizorkin87, Stepanets05, Triebel01}, theory of partial differential equations \cite{Hermander63, Hermander83, MikhailetsMurach14, NicolaRodino10, Paneah00}, theory of stochastic processes \cite{Jacob010205}, and others. In 1963, motivated by applications to partial differential equations, H\"ormander \cite{Hermander63} introduced and investigated the normed distribution spaces
$$
\mathcal{B}_{p,\mu}:=\bigl\{w\in\mathcal{S}'(\mathbb{R}^{k})
:\mu\widehat{w}\in L_p(\mathbb{R}^{k})\bigr\}
$$
parametrized with a sufficiently general function parameter $\mu:\mathbb{R}^{k}\to(0,\infty)$. Here, the number parameter $p$ satisfies $1\leq p\leq\infty$, and $\widehat{w}$ denotes the Fourier transform of the tempered distribution~$w$. H\"ormander gave important applications of his spaces to the investigation of the existence and regularity of solutions to partial differential equations (see also his monograph \cite{Hermander83}). The most complete results were obtained for the class of hypoelliptic equations, to which parabolic equations pertain. If $p=2$, the H\"ormander spaces become Hilbert ones and turn out to be a broad generalization of the inner product Sobolev spaces.

H\"ormander's monograph \cite{Hermander63} attracted a great attention to generalized Sobolev spaces and stimulated various investigations concerning these spaces and their applications, mostly to mathematical analysis (see, e.g., \cite{FarkasLeopold06, KalyabinLizorkin87, Triebel01, VolevichPaneah65} and references therein). However, these spaces were applied rarely to multidimensional boundary-value problems up to the recent time (we may refer only to \cite{Slenzak74}). This was caused by the absence of reasonable notion of generalized Sobolev spaces over manifolds (such spaces should be independent of local charts on the manifold) and by the lack of analytical tools to work with these spaces.

Recently the situation has essentially changed. Mikhailets and Murach \cite{MikhailetsMurach12BJMA2, MikhailetsMurach14} have built a theory of solvability of general elliptic boundary-value problems in generalized Sobolev spaces of the form  $H^{s;\varphi}:=\mathcal{B}_{2,\mu}$, where
$$
\mu(\xi):=(1+|\xi|^{2})^{s/2}\varphi((1+|\xi|^{2})^{1/2}),
$$
$s\in\mathbb{R}$, and the function $\varphi:[1,\infty)\to(0,\infty)$ varies slowly at infinity in the sense of Karamata. Note that these spaces are isotropic because the function $\mu$ depends only on $|\xi|$. The function parameter $\varphi$ defines a subordinate regularity of the distributions $w\in H^{s;\varphi}$ with respect to the basic power regularity given by the number $s$. If $\varphi(\cdot)\equiv1$, the space $H^{s;\varphi}$ will become the inner product Sobolev space of order $s$. The main research method of this theory is the interpolation with a function parameter of Hilbert spaces and linear operators acting on these spaces. The H\"ormander spaces used in the theory are obtained by this interpolation applied to pairs of inner product Sobolev spaces. This allows the authors to define the corresponding spaces over smooth manifolds and facilitates the application of these spaces to elliptic problems. Of late years this theory was extended to a wider class of generalized Sobolev spaces \cite{AnopKasirenko16MFAT, AnopMurach14UMJ}, namely to all Hilbert spaces that are interpolation ones between inner product Sobolev spaces \cite{MikhailetsMurach13UMJ3, MikhailetsMurach15ResMath1}.

The above-mentioned interpolation method proved to be useful in the theory of parabolic initial-boundary value problems as well. This was shown in papers \cite{Los15UMG5, Los16UMG6, LosMikhailetsMurach17CPAA, LosMurach13MFAT2, LosMurach17OpenMath} for some classes of parabolic problems. These papers deal with the generalized anisotropic Sobolev spaces $H^{s,s/(2b);\varphi}:=\mathcal{B}_{2,\mu}$, where
$$
\mu(\xi',\xi_{k})=\bigl(1+|\xi'|^2+|\xi_{k}|^{1/b}\bigr)^{s/2}
\varphi\bigl((1+|\xi'|^2+|\xi_{k}|^{1/b})^{1/2}\bigr)
$$
for all $\xi'\in\mathbb{R}^{k-1}$ and $\xi_{k}\in\mathbb{R}$. Here, $s\in\mathbb{R}$, $\varphi$ is the above-mentioned function, and $\nobreak{1\leq b\in\mathbb{Z}}$, with the even integer $2b$ characterizing the parabolicity of the partial differential equation investigated. These papers present isomorphism theorems for the parabolic problems considered in the indicated spaces and give some applications of these theorems to the study of the regularity of generalized solutions to the parabolic problems.

Note that various methods of the interpolation with a number parameter between normed spaces are used in the theory of multidimensional boundary-value problems \cite{Berezansky68, LionsMagenes72i, LionsMagenes72ii, Roitberg96, Triebel83, Triebel95}. However, the application of these methods to Sobolev spaces (or other classical function spaces depending on number parameters only) does not give spaces parametrized with function parameters.

The purpose of this paper is to prove an isomorphism theorem for a \emph{general} parabolic initial-boundary value problem given in a finite multidimensional cylinder and considered in the generalized Sobolev spaces $H^{s,s/(2b);\varphi}$. We use this theorem to prove some results on the local regularity of generalized solutions to the problem under investigation. Our main method is the interpolation with a function parameter between Hilbert spaces. This method allows us to deduce the isomorphism theorem from the known theorem \cite{AgranovichVishik64, Eidelman94, ZhitarashuEidelman98, LionsMagenes72ii, Zhitarashu85} on the well-posedness of the general parabolic problem in anisotropic Sobolev spaces. Note that the transition from the case \cite{LosMikhailetsMurach17CPAA} of homogeneous initial conditions to the general case is not easy even for the Sobolev spaces of integer orders (see \cite[Sections 10 and 11]{AgranovichVishik64}). Specifically, this transition uses the description of the spaces in terms of the spacial variables and time variable. To avoid this difficulty and other obstacles, we prefer to resort to the interpolation of spaces and operators that correspond to the parabolic problem with inhomogeneous initial conditions.

The paper consists of six sections and Appendix. Section~\ref{16sec1} is Introduction. Section~\ref{16sec2} contains the statement of the general parabolic initial-boundary value problem. Section~\ref{16sec3} discusses generalized anisotropic Sobolev spaces in which we investigate this problem. Our main results are formulated in Section~\ref{16sec4}. The basic result is Isomorphism Theorem~\ref{16th4.1} for the parabolic problem considered in the above-mentioned generalized Sobolev spaces. As applications of this theorem, we give Theorems \ref{16th4.3} and \ref{16th4.4}. Theorem~\ref{16th4.3} deals with the local regularity of the generalized solution to the problem. Theorem~\ref{16th4.4} yields  sufficient conditions under which chosen generalized derivatives of the solution are continuous on a given set. These conditions are essentially finer than their versions obtained in the framework of the Sobolev spaces \cite{Ilin60, IlinKalashnikovOleinik62} and are sharp. Section~\ref{16sec6} is devoted to our basic research method, the interpolation with a function parameter between Hilbert spaces. The main results are proved in Section~\ref{16sec7}. Appendix discusses the equivalence of the compatibility conditions imposed on the right-hand sides of the parabolic problem to those considered in the cited  papers \cite{AgranovichVishik64, ZhitarashuEidelman98, Zhitarashu85}.

\section{Statement of the problem}\label{16sec2}

We arbitrarily choose an integer $n\geq2$ and a real number $\tau>0$. Suppose that $G$ is a bounded domain in $\mathbb{R}^{n}$ and that its boundary $\Gamma:=\partial G$ is an infinitely smooth closed manifold of dimension $n-1$. (Of course, the $C^{\infty}$-structure on $\Gamma$ is induced by~$\mathbb{R}^{n}$.) Put $\Omega:=G\times(0,\tau)$ and $S:=\Gamma\times(0,\tau)$; thus, $\Omega$ is an open cylinder in $\mathbb{R}^{n+1}$, and $S$ is its lateral area, with their closures $\overline{\Omega}=\overline{G}\times[0,\tau]$ and
$\overline{S}=\Gamma\times[0,\tau]$. We naturally identify $\overline{G}$ with the lower base $\{(x,0):x\in\overline{G}\}$ of the closed cylinder $\overline{\Omega}$.

We consider the following parabolic initial-boundary value problem in $\Omega$:
\begin{equation}\label{16f1}
\begin{gathered}
A(x,t,D_x,\partial_t)u(x,t) \equiv\sum_{|\alpha|+2b\beta\leq
2m}a^{\alpha,\beta}(x,t)\,D^\alpha_x\partial^\beta_t
u(x,t)=f(x,t)\\
\mbox{for all}\quad x\in G\quad\mbox{and}\quad t\in(0,\tau);
\end{gathered}
\end{equation}
\begin{equation}\label{16f2}
\begin{gathered}
B_{j}(x,t,D_x,\partial_t)u(x,t)\equiv\sum_{|\alpha|+2b\beta\leq m_j}
b_{j}^{\alpha,\beta}(x,t)\,D^\alpha_x\partial^\beta_t u(x,t)\!\upharpoonright\!S=g_{j}(x,t)\\
\mbox{for all}\quad x\in\Gamma,\quad t\in(0,\tau)\quad\mbox{and}\quad
j\in\{1,\dots,m\};
\end{gathered}
\end{equation}
\begin{equation}\label{16f3}
(\partial^k_t u)(x,0)=h_k(x)\quad\mbox{for all}\quad x\in G\quad\mbox{and}\quad
k\in\{0,\ldots,\varkappa-1\}.
\end{equation}
Here, $b$, $m$, and all $m_j$ are arbitrarily choosen integers that satisfy the conditions $m\geq b\geq1$, $\varkappa:=m/b\in\mathbb{Z}$, and $m_j\geq0$. All the coefficients of the linear partial differential expressions $A:=A(x,t,D_x,\partial_t)$ and $B_{j}:=B_{j}(x,t,D_x,\partial_t)$, with $j\in\{1,\dots,m\}$, are supposed to be infinitely smooth complex-valued functions given on $\overline{\Omega}$ and $\overline{S}$ respectively; i.e., each
\begin{equation*}
a^{\alpha,\beta}\in C^{\infty}(\overline{\Omega}):=
\bigl\{w\!\upharpoonright\overline{\Omega}\!:\,w\in C^{\infty}(\mathbb{R}^{n+1})\bigr\}
\end{equation*}
and each
\begin{equation*}
b_{j}^{\alpha,\beta}\in C^{\infty}(\overline{S}):=
\bigl\{v\!\upharpoonright\overline{S}\!:\,v\in C^{\infty}(\Gamma\times\mathbb{R})\bigr\}.
\end{equation*}

We use the notation
$D^\alpha_x:=D^{\alpha_1}_{1}\dots D^{\alpha_n}_{n}$, with $D_{k}:=i\,\partial/\partial{x_k}$, and $\partial_t:=\partial/\partial t$
for partial derivatives of functions depending on $x=(x_1,\ldots,x_n)\in\mathbb{R}^{n}$ and $t\in\mathbb{R}$. Here, $i$ is imaginary unit, and $\alpha=(\alpha_1,...,\alpha_n)$ is a multi-index, with $|\alpha|:=\alpha_1+\cdots+\alpha_n$. In formulas \eqref{16f1} and \eqref{16f2} and their analogs, we take summation over the integer-valued nonnegative indices $\alpha_1,...,\alpha_n$ and $\beta$ that satisfy the condition written under the integral sign. As usual, $\xi^{\alpha}:=\xi_{1}^{\alpha_{1}}\ldots\xi_{n}^{\alpha_{n}}$ for $\xi:=(\xi_{1},\ldots,\xi_{n})\in\mathbb{C}^{n}$.

We recall \cite[Section~9, Subsection~1]{AgranovichVishik64} that the initial-boundary value problem \eqref{16f1}--\eqref{16f3} is called parabolic in $\Omega$ if the following Conditions \ref{16cond2.1} and \ref{16cond2.2} are satisfied.

\begin{condition}\label{16cond2.1}
If $x\in\overline{G}$, $t\in[0,\tau]$,
$\xi\in\mathbb{R}^{n}$, and $p\in\mathbb{C}$ with $\mathrm{Re}\,p\geq0$, then
\begin{equation*}
A^{\circ}(x,t,\xi,p)\equiv\sum_{|\alpha|+2b\beta=2m} a^{\alpha,\beta}(x,t)\,\xi^\alpha
p^{\beta}\neq0\quad\mbox{whenever}\quad|\xi|+|p|\neq0.
\end{equation*}
\end{condition}

To formulate the next condition, we arbitrarily choose a point $x\in\Gamma$, real number $t\in[0,\tau]$, vector $\xi\in\mathbb{R}^{n}$ tangent to the boundary $\Gamma$ at $x$, and number $p\in\mathbb{C}$ with $\mathrm{Re}\,p\geq0$ such that $|\xi|+|p|\neq0$. Let $\nu(x)$ be the unit vector of the inward normal to $\Gamma$ at $x$. It follows from Condition~\ref{16cond2.1} and the inequality $n\geq2$ that the polynomial
$A^{\circ}(x,t,\xi+\zeta\nu(x),p)$ in $\zeta\in\mathbb{C}$ has $m$ roots $\zeta^{+}_{j}(x,t,\xi,p)$, $j=\nobreak1,\ldots,m$, with positive imaginary part and $m$ roots with negative imaginary part provided that each root is taken the number of times equal to its multiplicity.

\begin{condition}\label{16cond2.2}
The polynomials
$$
B_{j}^{\circ}(x,t,\xi+\zeta\nu(x),p)\equiv\sum_{|\alpha|+2b\beta=m_{j}}
b_{j}^{\alpha,\beta}(x,t)\,(\xi+\zeta\nu(x))^{\alpha}\,p^{\beta},\quad j=1,\dots,m,
$$
in $\zeta\in\mathbb{C}$ are linearly independent modulo
$$
\prod_{j=1}^{m}(\zeta-\zeta^{+}_{j}(x,t,\xi,p)).
$$
\end{condition}
Note Condition~\ref{16cond2.1} is that under which the partial differential equation $Au=\nobreak f$ is $2b$-parabolic in $\overline{\Omega}$ in the sense of I.~G.~Petrovskii \cite{Petrovskii38}. Besides, Condition~\ref{16cond2.2} means  that the system of boundary partial differential expressions $\{B_{1},\ldots,B_{m}\}$ covers $A$ on $\overline{S}$. This condition is introduced by Zagorskii \cite{Zagorskii61}, as noticed in \cite[\S~9, Subsection~1]{AgranovichVishik64}.

We investigate parabolic problem \eqref{16f1}--\eqref{16f3}
in appropriate generalized Sobolev spaces considered in the next section.

\section{Generalized Sobolev spaces related to the problem}\label{16sec3}

Throughout the paper, we use complex distribution spaces and interpret distributions as antilinear functionals. Among the normed distribution spaces $\mathcal{B}_{p,\mu}$ introduced and investigated by H\"ormander in \cite[Section~2.2]{Hermander63}, we need the inner product spaces $H^{\mu}(\mathbb{R}^{k}):=\mathcal{B}_{2,\mu}$, which give a broad generalization of the concept of Sobolev spaces (in the framework of Hilbert spaces). Here, $1\leq k\in\mathbb{Z}$, and $\nobreak{\mu:\mathbb{R}^{k}\rightarrow(0,\infty)}$ is an arbitrary Borel measurable function for which there exist positive numbers $c$ and $l$ such that
\begin{equation}\label{16f78}
\frac{\mu(\xi)}{\mu(\eta)}\leq
c\,(1+|\xi-\eta|)^{l}\quad\mbox{whenever}\quad \xi,\eta\in\mathbb{R}^{k}.
\end{equation}

By definition, the linear space $H^{\mu}(\mathbb{R}^{k})$ consists of all  distributions $w\in\mathcal{S}'(\mathbb{R}^{k})$ whose Fourier transform $\widehat{w}$ is a locally Lebesgue integrable function such that
\begin{equation*}
\int\limits_{\mathbb{R}^{k}}\mu^{2}(\xi)\,|\widehat{w}(\xi)|^{2}\,d\xi
<\infty.
\end{equation*}
The inner product in $H^{\mu}(\mathbb{R}^{k})$ is defined by the formula
\begin{equation*}
(w_1,w_2)_{H^{\mu}(\mathbb{R}^{k})}=
\int\limits_{\mathbb{R}^{k}}\mu^{2}(\xi)\,\widehat{w_1}(\xi)\,
\overline{\widehat{w_2}(\xi)}\,d\xi,
\end{equation*}
where $w_1,w_2\in H^{\mu}(\mathbb{R}^{k})$; this inner product induces
the norm
$$
\|w\|_{H^{\mu}(\mathbb{R}^{k})}:=(w,w)^{1/2}_
{H^{\mu}(\mathbb{R}^{k})}.
$$
As usual, $\mathcal{S}'(\mathbb{R}^{k})$ stands for the linear topological space of all tempered distributions on $\mathbb{R}^{k}$; this space is the antidual of the Schwartz space $\mathcal{S}(\mathbb{R}^{k})$ of rapidly decreasing functions on $\mathbb{R}^{k}$.

According to \cite[Section~2.2]{Hermander63}, the space $H^{\mu}(\mathbb{R}^{k})$ is Hilbert and separable with respect to this inner product. Besides, this space is continuously embedded in  $\mathcal{S}'(\mathbb{R}^{k})$, and the set
$\mathcal{S}(\mathbb{R}^{k})$ is dense in $H^{\mu}(\mathbb{R}^{k})$, as well as
the set $C^{\infty}_{0}(\mathbb{R}^{k})$ of all compactly supported  $C^\infty$-functions on $\mathbb{R}^{k}$ (see also \cite[Section~10.1]{Hermander83}). We will say
that the function parameter $\mu$ is the regularity index for the space $H^{\mu}(\mathbb{R}^{k})$ and its versions $H^{\mu}(\cdot)$.

A version of $H^{\mu}(\mathbb{R}^{k})$ for an
arbitrary nonempty open set $V\subset\mathbb{R}^{k}$ is introduced in the standard way. Namely,
\begin{gather}\notag
H^{\mu}(V):=\bigl\{w\!\upharpoonright\!V:\,
w\in H^{\mu}(\mathbb{R}^{k})\bigr\},\\
\|u\|_{H^{\mu}(V)}:= \inf\bigl\{\|w\|_{H^{\mu}(\mathbb{R}^{k})}:\,w\in
H^{\mu}(\mathbb{R}^{k}),\;u=w\!\upharpoonright\!V\bigr\}, \label{8f40}
\end{gather}
where $u\in H^{\mu}(V)$. Here, as usual, $w\!\upharpoonright\!V$ stands for the restriction of the distribution $w$ to the open set~$V$. In other words, $H^{\mu}(V)$ is the factor space of the separable Hilbert space $H^{\mu}(\mathbb{R}^{k})$ by its subspace
\begin{equation}\label{8f41}
H^{\mu}_{Q}(\mathbb{R}^{k}):=\bigl\{w\in
H^{\mu}(\mathbb{R}^{k}):\, \mathrm{supp}\,w\subseteq Q\bigr\}, \end{equation}
with $Q:=\mathbb{R}^{k}\backslash V$. Thus, $H^{\mu}(V)$ is also Hilbert and separable.
The norm \eqref{8f40} is induced by the inner product
$$
(u_{1},u_{2})_{H^{\mu}(V)}:= (w_{1}-\Upsilon
w_{1},w_{2}-\Upsilon w_{2})_{H^{\mu}(\mathbb{R}^{k})},
$$
where $w_{j}\in H^{\mu}(\mathbb{R}^{k})$, $w_{j}=u_{j}$ in $V$
for each $j\in\{1,\,2\}$, and $\Upsilon$ is the orthogonal projector of the space $H^{\mu}(\mathbb{R}^{k})$ onto its subspace \eqref{8f41}. The spaces $H^{\mu}(V)$ and $H^{\mu}_{Q}(\mathbb{R}^{k})$ were introduced and investigated by Volevich and Paneah
\cite[Section~3]{VolevichPaneah65}.

It follows directly from the definition of $H^{\mu}(V)$ and properties of $H^{\mu}(\mathbb{R}^{k})$ that the space $H^{\mu}(V)$ is continuously embedded in the linear topological space $\mathcal{D}'(V)$ of all distributions on $V$ and that the set $\{w\!\upharpoonright\!\overline{V}: w\in C^{\infty}_{0}(\mathbb{R}^{k})\}$ is dense in $H^{\mu}(V)$.

Suppose that the integer $k\geq2$, and arbitrarily choose a real number $\gamma>0$. We
need the H\"ormander spaces $H^{\mu}(\mathbb{R}^{k})$ and their versions in the case where the regularity index $\mu$ takes the form
\begin{equation}\label{16f70}
\begin{gathered}
\mu(\xi',\xi_{k})=\bigl(1+|\xi'|^2+|\xi_{k}|^{2\gamma}\bigr)^{s/2}
\varphi\bigl((1+|\xi'|^2+|\xi_{k}|^{2\gamma})^{1/2}\bigr)\\
\mbox{for all}\;\;\xi'\in\mathbb{R}^{k-1}\;\;\mbox{and}\;\;
\xi_{k}\in\mathbb{R}.
\end{gathered}
\end{equation}
Here, the number parameter $s$ is real, whereas the function parameter $\varphi$ runs over a certain class~$\mathcal{M}$.

By definition, the class $\mathcal{M}$ consists of all Borel measurable functions $\varphi:[1,\infty)\rightarrow(0,\infty)$ such that
\begin{itemize}
  \item [a)] both the functions $\varphi$ and $1/\varphi$ are bounded on each compact interval $[1,d]$, with $1<d<\infty$;
  \item [b)] the function $\varphi$ varies slowly at infinity in the sense of Karamata \cite{Karamata30a}, i.e.
      $\varphi(\lambda r)/\varphi(r)\rightarrow\nobreak1$ as $r\rightarrow\infty$ for every $\lambda>0$.
\end{itemize}

The theory of slowly varying functions (at infinity) is set forth in \cite{BinghamGoldieTeugels89, BuldyginIndlekoferKlesovSteinebach18, Seneta76}. Their standard  examples are the functions
\begin{equation*}
\varphi(r):=(\log r)^{\theta_{1}}\,(\log\log r)^{\theta_{2}} \ldots
(\,\underbrace{\log\ldots\log}_{k\;\mbox{\tiny{times}}}r\,)^{\theta_{k}}
\quad\mbox{of}\;\;r\gg1,
\end{equation*}
where the parameters $1\leq k\in\mathbb{Z}$ and
$\theta_{1},\theta_{2},\ldots,\theta_{k}\in\mathbb{R}$ are arbitrarily chosen.

Note that the regularity index \eqref{16f70} satisfies condition \eqref{16f78} (see \cite[Appendix]{LosMikhailetsMurach17CPAA}).
Dealing with the above-stated parabolic problem,
we need the H\"ormander spaces $H^{\mu}(\mathbb{R}^{k})$ with the regularity index \eqref{16f70}
only in the case where $\gamma=1/(2b)$. However, it is naturally to introduce
these spaces for arbitrary $\gamma>0$.

Let $s\in\mathbb{R}$ and $\varphi\in\mathcal{M}$.
We put $H^{s,s\gamma;\varphi}(\mathbb{R}^{k}):=H^{\mu}(\mathbb{R}^{k})$ in the case
where $\mu$ is of the form~\eqref{16f70}. Specifically, if $\varphi(r)\equiv1$,
then $H^{s,s\gamma;\varphi}(\mathbb{R}^{k})$ becomes the anisotropic Sobolev
inner product space $H^{s,s\gamma}(\mathbb{R}^{k})$ of order $(s,s\gamma)$
\cite{BesovIlinNikolskii75, Slobodetskii58}. Generally, if $\varphi\in\mathcal{M}$, we have the dense continuous embeddings:
\begin{equation}\label{8f5}
H^{s_{1},s_{1}\gamma}(\mathbb{R}^{k})\hookrightarrow
H^{s,s\gamma;\varphi}(\mathbb{R}^{k})\hookrightarrow
H^{s_{0},s_{0}\gamma}(\mathbb{R}^{k})\quad\mbox{whenever}\quad s_{0}<s<s_{1}.
\end{equation}
Indeed, let $s_{0}<s<s_{1}$; since $\varphi\in\mathcal{M}$, there exist positive numbers $c_0$ and $c_1$ such that $c_0\,r^{s_0-s}\leq\varphi(r)\leq c_1\,r^{s_1-s}$ for every $r\geq1$ (see e.g., \cite[Section 1.5, Property $1^\circ$]{Seneta76}).
Then
\begin{align*}
c_{0}\bigl(1+|\xi'|^2+|\xi_{k}|^{2\gamma}\bigr)^{s_{0}/2}&\leq
\bigl(1+|\xi'|^2+|\xi_{k}|^{2\gamma}\bigr)^{s/2}
\varphi\bigl((1+|\xi'|^2+|\xi_{k}|^{2\gamma})^{1/2}\bigr)\\
&\leq c_{1}\bigl(1+|\xi'|^2+|\xi_{k}|^{2\gamma}\bigr)^{s_{1}/2}
\end{align*}
for arbitrary $\xi'\in\mathbb{R}^{k-1}$ and $\xi_{k}\in\mathbb{R}$.
This directly entails the continuous embeddings~\eqref{8f5}.
They are dense because the set $C^{\infty}_{0}(\mathbb{R}^{k})$
is dense in all the spaces in \eqref{8f5}.

Consider the class of H\"ormander inner product spaces
\begin{equation}\label{8f6}
\bigl\{H^{s,s\gamma;\varphi}(\mathbb{R}^{k}):\,
s\in\mathbb{R},\,\varphi\in\mathcal{M}\,\bigr\}.
\end{equation}
The embeddings \eqref{8f5} show, that in \eqref{8f6}
the function parameter $\varphi$ defines subordinate regularity
with respect to the  basic anisotropic $(s,s\gamma)$-regularity.
Specifically, if $\varphi(r)\rightarrow\infty$
[or $\varphi(r)\rightarrow\nobreak0$] as $r\rightarrow\infty$,
then $\varphi$ defines supplementary positive [or negative] regularity.
In other words, $\varphi$ refines the basic regularity $(s,s\gamma)$.

We need versions of the function spaces \eqref{8f6} for the cylinder
$\Omega=G\times(0,\tau)$ and its lateral boundary  $S=\Gamma\times(0,\tau)$.
We put $H^{s,s\gamma;\varphi}(\Omega):=H^{\mu}(\Omega)$ in the case
where $\mu$ is of the form~\eqref{16f70} with $k:=n+1$.
For the function space $H^{s,s\gamma;\varphi}(\Omega)$,
the numbers $s$ and $s\gamma$ serve as the regularity indices of
distributions $u(x,t)$ with respect to the spatial variable $x\in G$
and to the time variable $t\in(0,\tau)$ respectively.

Following \cite[Section~1]{Los16JMathSci}, we will define the function space $H^{s,s\gamma;\varphi}(S)$ with the help of special local charts on~$S$. Let $s>0$ and $\varphi\in\mathcal{M}$. We put $H^{s,s\gamma;\varphi}(\Pi):=H^{\mu}(\Pi)$ for the strip $\Pi:=\mathbb{R}^{n-1}\times(0,\tau)$ in the case where $\mu$ is defined by formula \eqref{16f70} with $k:=n$. Recall that, according to our assumption, $\Gamma=\partial\Omega$ is an infinitely smooth closed manifold of dimension $n-1$, the $C^{\infty}$-structure on $\Gamma$ being induced by $\mathbb{R}^{n}$. From this structure we arbitrarily choose a finite atlas formed by local charts $\nobreak{\theta_{j}:\mathbb{R}^{n-1}\leftrightarrow \Gamma_{j}}$ with $j=1,\ldots,\lambda$. Here, the open sets $\Gamma_{1},\ldots,\Gamma_{\lambda}$
make up a covering of~$\Gamma$. We also arbitrarily choose  functions $\chi_{j}\in C^{\infty}(\Gamma)$, with $j=1,\ldots,\lambda$, such that $\mathrm{supp}\,\chi_{j}\subset\Gamma_{j}$ and $\chi_{1}+\cdots\chi_{\lambda}=1$ on $\Gamma$.

By definition, the linear space $H^{s,s\gamma;\varphi}(S)$ consists of all
square integrable functions $\nobreak{v:S\to\mathbb{C}}$ that the function
$$
v_{j}(y,t):=\chi_{j}(\theta_{j}(y))\,v(\theta_{j}(y),t)
\quad\mbox{of}\;\;y\in\mathbb{R}^{n-1}\;\;\mbox{and}\;\;t\in(0,\tau)
$$
belongs to $H^{s,s\gamma;\varphi}(\Pi)$ for each
$j\in\{1,\ldots,\lambda\}$. The inner product in $H^{s,s\gamma;\varphi}(S)$ is defined by the formula
\begin{equation*}
(v,v^\circ)_{H^{s,s\gamma;\varphi}(S)}:=\sum_{j=1}^{\lambda}\,
(v_{j},v^\circ_{j})_{H^{s,s\gamma;\varphi}(\Pi)},
\end{equation*}
where $v,v^\circ\in H^{s,s\gamma;\varphi}(S)$. This inner product induces the norm
$$
\|v\|_{H^{s,s\gamma;\varphi}(S)}:=(v,v)^{1/2}_{H^{s,s\gamma;\varphi}(S)}.
$$
The space $H^{s,s\gamma;\varphi}(S)$ is separable Hilbert one and does not depend up to equivalence of norms on the choice of local charts and partition of unity on $\Gamma$ \cite[Theorem~1]{Los16JMathSci}. (The proof in \cite{Los16JMathSci} is done
in the $\gamma\in\mathbb{Q}$ case we really need; the general case is treated similarly
to \cite[Lemma~3.1]{LosMikhailetsMurach17CPAA}).
Note that this space is actually defined with the help of the following special local charts on~$S$:
\begin{equation}\label{8f-local}
\theta_{j}^*:\Pi=\mathbb{R}^{n-1}\times(0,\tau)\leftrightarrow
\Gamma_{j}\times(0,\tau),\quad j=1,\ldots,\lambda,
\end{equation}
where $\theta_{j}^*(y,t):=(\theta_{j}(y),t)$ for all
$y\in\mathbb{R}^{n-1}$ and $t\in(0,\tau)$.

Let $s\in\mathbb{R}$ and $\varphi\in\mathcal{M}$. We also need the isotropic space $H^{s;\varphi}(V)$ over an arbitrary open nonempty set $V\subseteq\mathbb{R}^{k}$, with $k\geq1$. We put $H^{s;\varphi}(V):=H^{\mu}(V)$ in the case where
\begin{equation}\label{8f50}
\mu(\xi)=\bigl(1+|\xi|^2\bigr)^{s/2}\varphi\bigl((1+|\xi|^2)^{1/2}\bigr)
\quad\mbox{of}\;\;\xi\in\mathbb{R}^{k}.
\end{equation}
Since the function \eqref{8f50} is radial (i.e., depends only on
$|\xi|$), the space $H^{s;\varphi}(V)$ is isotropic.
We will use the spaces $H^{s;\varphi}(V)$ given over the whole Euclidean space $V:=\mathbb{R}^{k}$ or over the domain $V:=G$ in $\mathbb{R}^{n}$.

Besides, we will use the space $H^{s;\varphi}(\Gamma)$ over $\Gamma=\partial\Omega$. It is defined with the help of the above-mentioned collection of local charts $\{\theta_{j}\}$ and partition of unity $\{\chi_{j}\}$ on $\Gamma$ similarly to the spaces over $S$. By definition, the linear space  $H^{s;\varphi}(\Gamma)$ consists of all distributions $\omega$ on $\Gamma$ that for each number $j\in\{1,\ldots,\lambda\}$ the distribution
$\omega_{j}(y):=\chi_{j}(\theta_{j}(y))\,\omega(\theta_{j}(y))$ of
$y\in\mathbb{R}^{n-1}$ belongs to $H^{s;\varphi}(\mathbb{R}^{n-1})$.
The inner product in $H^{s;\varphi}(\Gamma)$ is defined by the formula
\begin{equation*}
(\omega,\omega^\circ)_{H^{s;\varphi}(\Gamma)}:=
\sum_{j=1}^{\lambda}\,
(\omega_{j},\omega^\circ_{j})_{H^{s;\varphi}(\mathbb{R}^{n-1})},
\end{equation*}
where $\omega,\omega^\circ\in H^{s;\varphi}(\Gamma)$. It induces the norm
$$
\|\omega\|_{H^{s;\varphi}(\Gamma)}:=
(\omega,\omega)^{1/2}_{H^{s;\varphi}(\Gamma)}.
$$
The space $H^{s;\varphi}(\Gamma)$ is separable Hilbert one and does not depend up to equivalence of norms on our choice of local charts and partition of unity on $\Gamma$ \cite[Theorem 2.1]{MikhailetsMurach14}.

Note that the classes of isotropic inner product spaces
\begin{equation*}
\bigl\{H^{s;\varphi}(V):s\in\mathbb{R},\;\varphi\in\mathcal{M}\bigr\}
\quad\mbox{and}\quad
\bigl\{H^{s;\varphi}(\Gamma):s\in\mathbb{R},\;\varphi\in\mathcal{M}\bigr\}
\end{equation*}
were selected, investigated, and systematically applied to elliptic differential operators and elliptic boundary-value problems by Mikhailets and Murach \cite{MikhailetsMurach14, MikhailetsMurach12BJMA2}.

If $\varphi\equiv1$, then the considered spaces $H^{s,s\gamma;\varphi}(\cdot)$ and $H^{s;\varphi}(\cdot)$ become the inner product Sobolev spaces $H^{s,s\gamma}(\cdot)$, anisotropic, and $H^{s}(\cdot)$, isotropic, respectively. It follows directly from \eqref{8f5} that
\begin{equation}\label{8f5a}
H^{s_{1},s_{1}\gamma}(\cdot)\hookrightarrow
H^{s,s\gamma;\varphi}(\cdot)\hookrightarrow
H^{s_{0},s_{0}\gamma}(\cdot)\quad\mbox{whenever}\quad s_{0}<s<s_{1}.
\end{equation}
Analogously,
\begin{equation}\label{8f5b}
H^{s_{1}}(\cdot)\hookrightarrow
H^{s;\varphi}(\cdot)\hookrightarrow
H^{s_{0}}(\cdot)\quad\mbox{whenever}\quad s_{0}<s<s_{1};
\end{equation}
see \cite[Theorems 2.3(iii) and 3.3(iii)]{MikhailetsMurach14}. These embeddings are continuous and dense. Certainly, if $s=0$, then $H^{s}(\cdot)=H^{s,s\gamma}(\cdot)$ is the Hilbert space $L_2(\cdot)$ of all square integrable functions given on the corresponding measurable set.

In the Sobolev case of $\varphi\equiv1$, we will omit the index $\varphi$ in designations of distribution spaces that will be introduced on the base of the spaces $H^{s,s\gamma;\varphi}(\cdot)$ and $H^{s;\varphi}(\cdot)$.


\section{Main results}\label{16sec4}

We  will formulate an isomorphism theorem for the parabolic problem \eqref{16f1}--\eqref{16f3} in the generalized Sobolev spaces introduced and then apply it to the investigation of regularity of solutions to the problem.

In order that a regular solution $u(x,t)$ to this problem exist, the
right-hand sides of the problem should satisfy certain compatibility conditions (see, e.g., \cite[Chapter~4, Section~5]{LadyzhenskajaSolonnikovUraltzeva67}).
These conditions consist in that the partial derivatives $(\partial^k_tu)(x,0)$, which could be found from the parabolic equation \eqref{16f1} and initial conditions \eqref{16f3}, should satisfy the boundary conditions \eqref{16f2} and some relations that are obtained by the differentiation of the boundary conditions with respect to~$t$. To write these compatibility conditions, we previously consider the problem in appropriate anisotropic Sobolev spaces.

We associate the linear mapping
\begin{equation}\label{16f4}
u\mapsto\Lambda
u:=\bigl(Au,B_1u,\ldots,B_mu,u\!\upharpoonright\!G,\ldots,
(\partial^{\varkappa-1}_tu)\!\upharpoonright\!G\bigr),\quad u\in
C^{\infty}(\overline{\Omega}),
\end{equation}
with the problem \eqref{16f1}--\eqref{16f3}.
Put
\begin{equation*}
\sigma_0:=\max\{2m,m_1+1,\dots,m_m+1\}.
\end{equation*}
(Specifficaly, if $m_j\leq2m-1$ for each $j\in\{1,\ldots,m\}$, then $\sigma_0=2m$.)
Let real $s\geq\sigma_0$; the mapping \eqref{16f4} extends uniquely (by continuity)
to a bounded linear operator
\begin{equation}\label{16f4a}
\Lambda:H^{s,s/(2b)}(\Omega)\rightarrow
\mathcal{H}^{s-2m,(s-2m)/(2b)},
\end{equation}
with
\begin{equation}\label{16rangeH}
\begin{aligned}
\mathcal{H}^{s-2m,(s-2m)/(2b)}
:= H^{s-2m,(s-2m)/(2b)}(\Omega)
&\oplus\bigoplus_{j=1}^{m}H^{s-m_j-1/2,(s-m_j-1/2)/(2b)}(S)\\
&\oplus\bigoplus_{k=0}^{\varkappa-1}H^{s-2bk-b}(G).
\end{aligned}
\end{equation}
This follows directly from the known properties of partial differential operators and trace operators on anisotropic Sobolev spaces (see, e.g., \cite[Chapter~I, Lemma~4, and Chapter~II, Theorems 3 and~7]{Slobodetskii58}). Choosing any function $u(x,t)$ from the space $H^{s,s/(2b)}(\Omega)$, we define the right-hand sides
\begin{equation}\label{16f4b}
\begin{gathered}
f\in H^{s-2m,(s-2m)/(2b)}(\Omega),\quad g_j\in H^{s-m_j-1/2,(s-m_j-1/2)/(2b)}(S),\quad
\mbox{and}\quad h_k\in H^{s-2bk-b}(G)\\
\mbox{for all}\quad j\in\{1,\dots,m\}\quad\mbox{and}\quad
k\in\{0,\dots,\varkappa-1\}
\end{gathered}
\end{equation}
of the problem by the formula
$$(f,g_1,...,g_m,h_0,...,h_{\varkappa-1}):=\Lambda u,$$
where $\Lambda$ is the operator \eqref{16f4a}.

The compatibility conditions for the functions $f$, $g_j$, and $h_k$ arise naturally in such a way. According to \cite[Chapter~II, Theorem 7]{Slobodetskii58}, the traces
$(\partial^{k}_t u)(\cdot,0)\in H^{s-2bk-b}(G)$ are well defined by closure for all $k\in\mathbb{Z}$ such that $0\leq k<s/(2b)-1/2$ (and only for these $k$).
These traces are expressed from \eqref{16f1} and \eqref{16f3} in terms of
$f$ and $h_k$ as follows.

The parabolicity Condition \ref{16cond2.1} in the case of $\xi=0$ and $p=1$ means that the coefficient $a^{(0,\ldots,0),\varkappa}(x,t)\neq0$ for all $x\in\overline{G}$ and $t\in[0,\tau]$.  We can therefore solve the parabolic equation \eqref{16f1} with respect to $\partial^\varkappa_t
u(x,t)$; namely,
\begin{equation}\label{16f5}
\partial^\varkappa_t u(x,t)=
\sum_{\substack{|\alpha|+2b\beta\leq 2m,\\ \beta\leq\varkappa-1}}
a_{0}^{\alpha,\beta}(x,t)\,D^\alpha_x\partial^\beta_t
u(x,t)+(a^{(0,\ldots,0),\varkappa}(x,t))^{-1}f(x,t),
\end{equation}
with $a_{0}^{\alpha,\beta}:=-a^{\alpha,\beta}/a^{(0,\ldots,0),\varkappa}\in C^{\infty}(\overline{\Omega})$. Let $k\in\mathbb{Z}$ satisfy $0\leq k<s/(2b)-1/2$. It follows from the initial conditions \eqref{16f3}, equality \eqref{16f5}, and the equalities obtained by the differentiation of \eqref{16f5} $k-\varkappa$ times with respect to $t$ that
\begin{equation}\label{16f6}
\begin{aligned}
(\partial^{k}_t u)(x,0)&=h_k(x)\quad\mbox{if}
\quad 0\leq k\leq\varkappa-1,\\
(\partial^{k}_t u)(x,0)&=
\sum_{\substack{|\alpha|+2b\beta\leq 2m,\\ \beta\leq\varkappa-1}}\,
\sum\limits_{q=0}^{k-\varkappa}
\binom{k-\varkappa}{q}(\partial^{k-\varkappa-q}_t
a_{0}^{\alpha,\beta})(x,0)\,D^\alpha_x(\partial^{\beta+q}_tu)(x,0)+\\
&+\partial^{k-\varkappa}_t((a^{(0,\ldots,0),\varkappa})^{-1}
f)(x,0)\quad\mbox{if}\quad k\geq\varkappa.
\end{aligned}
\end{equation}
These equalities hold for almost all $x\in G$, and partial derivatives are interpreted in the sense of the theory of distributions.

Besides, according to \cite[Chapter~II, Theorem 7]{Slobodetskii58}, for each $j\in\{1,\dots,m\}$
the traces $\partial^{\,k}_t g_j(\cdot,0)\in H^{s-m_j-1/2-2bk-b}(\Gamma)$
are well defined by closure for all $k\in\mathbb{Z}$
such that $0\leq k<(s-m_j-1/2-b)/(2b)$ (and only for these $k$).
We can express these traces in terms of the function $u(x,t)$ and its time derivatives; namely,
\begin{equation}\label{16f4bb}
\begin{aligned}
(\partial^{k}_t g_j)(x,0)&=(\partial^{k}_{t}B_ju)(x,0)\\
&=\sum_{|\alpha|+2b\beta\leq m_j}\,
\sum_{q=0}^{k}\binom{k}{q}
(\partial^{k-q}_t b^{\alpha,\beta}_j)(x,0)\,
D^\alpha_x(\partial^{\beta+q}_t u)(x,0)
\end{aligned}
\end{equation}
for almost all $x\in\Gamma$. Here, all the functions
$$
u(x,0),(\partial_{t}u)(x,0),\ldots,(\partial^{\,[m_j/(2b)]+k}_{t}u)(x,0)
$$
of $x\in G$ are expressed in terms of the functions $f(x,t)$ and $h_{k}(x)$ by the recurrent formula \eqref{16f6}. (As usual, $[m_j/(2b)]$ denotes the integral part of $m_j/(2b)$.)

Substituting \eqref{16f6} in the right-hand side of \eqref{16f4bb}, we obtain the compatibility conditions
\begin{equation}\label{16f8}
\begin{gathered}
\partial^{k}_t g_j\!\upharpoonright\!\Gamma=
B_{j,k}(v_0,\dots,v_{[m_j/(2b)]+k})\!\upharpoonright\!\Gamma \\
\mbox{for each}\;\;j\in\{1,\dots,m\}\;\;\mbox{and}\;\;k\in\mathbb{Z}
\;\;\mbox{such that}\;\;0\leq k<\frac{s-m_j-1/2-b}{2b}.
\end{gathered}
\end{equation}
Here,
the functions $v_0, v_1,\ldots$ are defined almost everywhere on $G$  by the formulas
\begin{equation}\label{16f9}
\begin{aligned}
v_k(x)&=h_k(x)\quad\mbox{if}
\quad 0\leq k\leq\varkappa-1,\\
v_k(x)&=
\sum_{\substack{|\alpha|+2b\beta\leq 2m,\\ \beta\leq\varkappa-1}}\,
\sum\limits_{q=0}^{k-\varkappa}
\binom{k-\varkappa}{q}(\partial^{\,k-\varkappa-q}_t
a_{0}^{\alpha,\beta})(x,0)\,D^\alpha_x v_{\beta+q}(x)+\\
&\qquad\quad+\partial^{k-\varkappa}_t ((a^{(0,\ldots,0),\varkappa})^{-1}f)(x,0)
\quad\mbox{if}\quad k\geq\varkappa,
\end{aligned}
\end{equation}
and
\begin{equation}\label{16f9B}
B_{j,k}(v_0,\dots,v_{[m_j/(2b)]+k})(x)=\sum_{|\alpha|+2b\beta\leq m_j}
\sum_{q=0}^{k}\binom{k}{q}(\partial^{k-q}_t b^{\alpha,\beta}_j)(x,0)\,
D^\alpha_x v_{\beta+q}(x)
\end{equation}
for almost all $x\in G$. Note, that
\begin{equation*}
v_k\in H^{s-2bk-b}(G)\quad\mbox{for each}\quad
k\in\mathbb{Z}\cap[0,s/(2b)-1/2)
\end{equation*}
due to \eqref{16f4b}.
The right-hand side of the equality in \eqref{16f8} is well defined because the function
$B_{j,k}(v_0,\dots,v_{[m_j/(2b)]+k})$ belongs to $H^{s-m_j-2bk-b}(G)$ and the trace
\begin{equation}\label{8f69aa}
B_{j,k}(v_0,\dots,v_{[m_j/(2b)]+k})\!\upharpoonright\!\Gamma\in H^{s-m_j-2bk-b-1/2}(\Gamma)
\end{equation}
is therefore defined by closure whenever $s-m_j-2bk-b-1/2>0$.

The number of the compatibility conditions \eqref{16f8} is a function of
$s\geq\sigma_0$. This function is discontinuous at $s$ if and only if
$(s-m_j-1/2-b)/(2b)\in\mathbb{Z}$. Thus, the set of all its discontinuities
coincides with
\begin{equation}\label{setE}
E:=\{(2l+1)b+m_j+1/2:j,l\in\mathbb{Z},\;1\leq j\leq m,\;l\geq0\}
\cap(\sigma_0,\infty).
\end{equation}
Note, if $s\leq\min\{m_1,\ldots,m_m\}+b+1/2$, there are no compatibility conditions.

Our main result on the parabolic problem \eqref{16f1}--\eqref{16f3} consists in that the linear mapping \eqref{16f4} extends uniquely  to an isomorphism between appropriate pairs of generalized Sobolev spaces introduced in the previous section. Let us indicate these pairs. We arbitrarily choose a real number $s>\sigma_0$ and function parameter $\varphi\in\mathcal{M}$. We also consider the Sobolev case where $s=\sigma_0$ and $\varphi\equiv1$ (we need it to formulate our results).
We take $H^{s,s/(2b);\varphi}(\Omega)$ as the domain of this isomorphism. Its range is imbedded in the Hilbert space
\begin{align*}
\mathcal{H}^{s-2m,(s-2m)/(2b);\varphi}:=
H^{s-2m,(s-2m)/(2b);\varphi}(\Omega)
&\oplus\bigoplus_{j=1}^{m}H^{s-m_j-1/2,(s-m_j-1/2)/(2b);\varphi}(S)\\
&\oplus\bigoplus_{k=0}^{\varkappa-1}H^{s-2bk-b;\varphi}(G)
\end{align*}
and is denoted by  $\mathcal{Q}^{s-2m,(s-2m)/(2b);\varphi}$.
[If $\varphi\equiv1$, then $\mathcal{H}^{s-2m,(s-2m)/(2b);\varphi}$ is the target space of \eqref{16f4a}.]
We separately define $\mathcal{Q}^{s-2m,(s-2m)/(2b);\varphi}$
in the cases where $s\notin E$ and where $s\in E$.

Suppose first that $s\notin E$. By definition, the linear space $\mathcal{Q}^{s-2m,(s-2m)/(2b);\varphi}$ consists of all vectors
$$
F:=\bigl(f,g_1,\dots,g_m,h_0,\dots,h_{\varkappa-1}\bigr)\in
\mathcal{H}^{s-2m,(s-2m)/(2b);\varphi}
$$
that satisfy the compatibility conditions \eqref{16f8}.
These conditions are well defined for every indicated $F$ because
they are well defined whenever
$F\in\mathcal{H}^{s-\varepsilon-2m,(s-\varepsilon-2m)/(2b)}$ and
$0<\varepsilon\ll 1$ and because
\begin{equation}\label{8f69a}
\mathcal{H}^{s-2m,(s-2m)/(2b);\varphi}\hookrightarrow
\mathcal{H}^{s-\varepsilon-2m,(s-\varepsilon-2m)/(2b)}.
\end{equation}
[This continuous embedding  follows directly from \eqref{8f5a} and \eqref{8f5b}.]
We endow the linear space $\mathcal{Q}^{s-2m,(s-2m)/(2b);\varphi}$ with the inner product and norm in the Hilbert space
$\mathcal{H}^{s-2m,(s-2m)/(2b);\varphi}$. The space $\mathcal{Q}^{s-2m,(s-2m)/(2b);\varphi}$
is complete, i.e. Hilbert. Indeed,
$$
\mathcal{Q}^{s-2m,(s-2m)/(2b);\varphi}=
\mathcal{H}^{s-2m,(s-2m)/(2b);\varphi}\cap
\mathcal{Q}^{s-\varepsilon-2m,(s-\varepsilon-2m)/(2b)}
$$
whenever $0<\varepsilon\ll 1$.
Here, the space $\mathcal{Q}^{s-\varepsilon-2m,(s-\varepsilon-2m)/(2b)}$ is complete because the differential operators and trace operators used in the compatibility conditions are bounded on the corresponding pairs of Sobolev spaces. Therefore, the right-hand side of this equality is complete with respect to the sum of the norms in the components of the intersection, this sum being equivalent to the norm in $\mathcal{H}^{s-2m,(s-2m)/(2b);\varphi}$ due to \eqref{8f69a}. Thus, the space $\mathcal{Q}^{s-2m,(s-2m)/(2b);\varphi}$ is complete (with respect to the latter norm).

If $s\in E$, then we define the Hilbert space $\mathcal{Q}^{s-2m,(s-2m)/(2b);\varphi}$
by means of the interpolation between its analogs just introduced. Namely, we put
\begin{equation}\label{16f10}
\mathcal{Q}^{s-2m,(s-2m)/(2b);\varphi}:=
\bigl[\mathcal{Q}^{s-\varepsilon-2m,(s-\varepsilon-2m)/(2b);\varphi},
\mathcal{Q}^{s+\varepsilon-2m,(s+\varepsilon-2m)/(2b);\varphi}\bigr]_{1/2}.
\end{equation}
Here, the number $\varepsilon\in(0,1/2)$ is arbitrarily chosen, and the right-hand side of the equality is the result of the interpolation with the parameter~$1/2$
of the written pair of Hilbert spaces. We will recall the definition of the interpolation between Hilbert spaces in  Section~\ref{16sec6}. The Hilbert space $\mathcal{Q}^{s-2m,(s-2m)/(2b);\varphi}$ defined by formula \eqref{16f10} does not depend on our choice of $\varepsilon$ up to equivalence of norms and is continuously embedded in $\mathcal{H}^{s-2m,(s-2m)/(2b);\varphi}$. This will be shown in Remark~\ref{16rem8.1}.

\begin{theorem}\label{16th4.1}
For arbitrary $s>\sigma_0$ and $\varphi\in\nobreak\mathcal{M}$, the mapping \eqref{16f4} extends
uniquely (by continuity) to an isomorphism
\begin{equation}\label{16f11}
\Lambda :\,H^{s,s/(2b);\varphi}(\Omega)\leftrightarrow
\mathcal{Q}^{s-2m,(s-2m)/(2b);\varphi}.
\end{equation}
\end{theorem}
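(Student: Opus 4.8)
The plan is to deduce Theorem~\ref{16th4.1} from the known isomorphism theorem for the parabolic problem \eqref{16f1}--\eqref{16f3} in the anisotropic Sobolev spaces by means of the interpolation with a function parameter between Hilbert spaces. The starting point is that for every real $s\geq\sigma_0$ with $s\notin E$ the mapping \eqref{16f4} extends to an isomorphism
\begin{equation*}
\Lambda:\,H^{s,s/(2b)}(\Omega)\leftrightarrow\mathcal{Q}^{s-2m,(s-2m)/(2b)};
\end{equation*}
this is the classical result of Agranovich--Vishik, \v Zitara\v su--Eidelman, and Lions--Magenes cited in the Introduction (in the form stated in \cite{ZhitarashuEidelman98, Zhitarashu85}, with the Appendix taking care of matching our compatibility conditions to theirs).

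First I would fix $s>\sigma_0$ and $\varphi\in\mathcal{M}$ and choose $\varepsilon\in(0,1/2)$ so small that both $s-\varepsilon>\sigma_0$ and the half-open interval $[s-\varepsilon,s+\varepsilon]$ contains no point of $E$ other than possibly $s$ itself, and also $s\pm\varepsilon\notin E$. Then I would invoke the above isomorphism at the two endpoints $s\mp\varepsilon$ to get
\begin{equation*}
\Lambda:\,H^{s\mp\varepsilon,(s\mp\varepsilon)/(2b)}(\Omega)\leftrightarrow
\mathcal{Q}^{s\mp\varepsilon-2m,(s\mp\varepsilon-2m)/(2b)},
\end{equation*}
and apply the interpolation functor $[\,\cdot\,,\,\cdot\,]_{\psi}$ with the function parameter
$\psi(r)=r^{1/2}\varphi(r^{1/(2\varepsilon)})$ (the standard choice, cf.\ \cite{MikhailetsMurach14}) to this pair of isomorphisms. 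By the interpolation property of bounded operators, this yields an isomorphism between the corresponding interpolation spaces. The two facts I then need are: (i) the interpolation space of the pair $\bigl(H^{s-\varepsilon,(s-\varepsilon)/(2b)}(\Omega),\,H^{s+\varepsilon,(s+\varepsilon)/(2b)}(\Omega)\bigr)$ with this parameter is, up to equivalence of norms, $H^{s,s/(2b);\varphi}(\Omega)$, and likewise for each summand $H^{s\mp\varepsilon-m_j-1/2,\cdot}(S)$ and $H^{s\mp\varepsilon-2bk-b}(G)$ in $\mathcal{H}$; and (ii) the interpolation space of the pair $\bigl(\mathcal{Q}^{s-\varepsilon-2m,\cdot},\mathcal{Q}^{s+\varepsilon-2m,\cdot}\bigr)$ is $\mathcal{Q}^{s-2m,(s-2m)/(2b);\varphi}$. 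Item (i) is the computation of interpolation spaces between H\"ormander (and Sobolev) spaces on $\mathbb{R}^{k}$, on $\Omega$, and on the manifold-with-boundary pieces $S$ and $G$; the Euclidean and domain cases follow from the general interpolation formula for $H^{\mu}$-spaces together with \eqref{16f70}/\eqref{8f50}, and the $S$-case is reduced to the strip by the local charts \eqref{8f-local} exactly as in \cite[Theorem~1]{Los16JMathSci}, while finite direct sums interpolate componentwise. Item (ii) is precisely the definition \eqref{16f10} when $s\in E$, and when $s\notin E$ it follows from the description $\mathcal{Q}^{s-2m,\cdot;\varphi}=\mathcal{H}^{s-2m,\cdot;\varphi}\cap\mathcal{Q}^{s-\varepsilon-2m,\cdot}$ together with the interpolation-respects-intersection lemma; either way this is where Remark~\ref{16rem8.1} enters to certify independence of $\varepsilon$ and the embedding into $\mathcal{H}$.

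Having produced the interpolated isomorphism $\Lambda:\,H^{s,s/(2b);\varphi}(\Omega)\leftrightarrow\mathcal{Q}^{s-2m,(s-2m)/(2b);\varphi}$, it remains to check that it is the continuous extension of the original mapping \eqref{16f4} on $C^{\infty}(\overline{\Omega})$ and that this extension is unique. Uniqueness is immediate from the density of $C^{\infty}(\overline{\Omega})$ in $H^{s,s/(2b);\varphi}(\Omega)$ (which comes from \eqref{8f5a} and the density of smooth functions in the larger Sobolev space). That the interpolated operator agrees with \eqref{16f4} on smooth $u$ follows because the interpolation construction is built from the \emph{same} bounded operator acting consistently on the endpoint spaces: on the dense common subset $C^{\infty}(\overline{\Omega})$ all extensions coincide with $\Lambda u=(Au,B_1u,\dots,B_mu,u\!\upharpoonright\!G,\dots,(\partial_t^{\varkappa-1}u)\!\upharpoonright\!G)$.

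The main obstacle I anticipate is item (i) for the lateral-boundary spaces $H^{s,s/(2b);\varphi}(S)$: one must show that interpolation commutes with the passage from the strip $\Pi$ to $S$ via the special charts \eqref{8f-local} and the partition of unity, i.e.\ that the flattening operators and the multiplication-by-$\chi_j$ operators are bounded uniformly across the whole one-parameter family of endpoint spaces, so that the interpolated norm on $S$ matches the intrinsically defined norm up to equivalence. The non-integer shifts $s-m_j-1/2$ and the anisotropy parameter $\gamma=1/(2b)$ make the bookkeeping delicate, but the argument parallels \cite[Lemma~3.1]{LosMikhailetsMurach17CPAA} and \cite[Theorem~1]{Los16JMathSci} and introduces no genuinely new difficulty. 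A secondary point requiring care is ensuring, in the choice of $\varepsilon$, that the number of compatibility conditions is the same at $s-\varepsilon$ and $s+\varepsilon$ when $s\notin E$, so that $\mathcal{Q}^{s-\varepsilon-2m,\cdot}$ and $\mathcal{Q}^{s+\varepsilon-2m,\cdot}$ are cut out by the same system \eqref{16f8}; this is exactly what $s\notin E$ guarantees.
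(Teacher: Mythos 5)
Your outline follows the paper's strategy (deduce the theorem from the Sobolev-case isomorphism of Agranovich--Vishik/Zhitarashu by interpolation with a function parameter), but two steps that you treat as routine are in fact where the real work lies, and as written they are gaps. First, for $s\notin E$ you invoke "the interpolation-respects-intersection lemma" to identify $\bigl[\mathcal{Q}^{s-\varepsilon-2m,\cdot},\mathcal{Q}^{s+\varepsilon-2m,\cdot}\bigr]_{\psi}$ with $\mathcal{H}^{s-2m,\cdot;\varphi}\cap\mathcal{Q}^{s-\varepsilon-2m,\cdot}$. The relevant tool (Proposition~\ref{8prop1}) applies only if there is a \emph{common projector} of both endpoint spaces $\mathcal{H}^{s\mp\varepsilon-2m,\cdot}$ onto their subspaces $\mathcal{Q}^{s\mp\varepsilon-2m,\cdot}$ cut out by the compatibility conditions \eqref{16f8}; interpolation of subspaces does not in general commute with taking intersections without it. Constructing this projector is the core technical content of the paper's Lemma~\ref{16lem7.4}: one corrects the boundary data $g_j$ by means of a right inverse of the Cauchy-data (trace) operator on the lateral surface $S$, and building that right inverse uniformly over the H\"ormander scale is the purpose of the long Lemma~\ref{8lem1} (the operator \eqref{8f43}). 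You identify the interpolation of the $S$-spaces as the main obstacle, but the projector construction is the genuinely new ingredient your proposal omits entirely.

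Second, in the case $s\in E$ your single-step interpolation of the two Sobolev isomorphisms with the function parameter $\psi(r)=r^{1/2}\varphi(r^{1/(2\varepsilon)})$ lands in $\bigl[\mathcal{Q}^{s-\varepsilon-2m,\cdot},\mathcal{Q}^{s+\varepsilon-2m,\cdot}\bigr]_{\psi}$, and you assert this "is precisely the definition \eqref{16f10}". It is not: \eqref{16f10} interpolates the \emph{generalized} spaces $\mathcal{Q}^{s\pm\varepsilon-2m,\cdot;\varphi}$ with the \emph{number} parameter $1/2$. Identifying the two would require a reiteration argument on the $\mathcal{Q}$-scale, i.e.\ realizing both $\mathcal{Q}^{s\pm\varepsilon-2m,\cdot;\varphi}$ as interpolation spaces of the single pair $\bigl[\mathcal{Q}^{s-\varepsilon-2m,\cdot},\mathcal{Q}^{s+\varepsilon-2m,\cdot}\bigr]$ and then applying Proposition~\ref{8prop3}; this is not available, precisely because the two endpoints lie in different components $J_l$ of $(\sigma_0,\infty)\setminus E$ and satisfy different numbers of compatibility conditions (so no common projector exists for that pair either). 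The paper avoids the issue by a two-stage argument: it first proves the theorem at the non-exceptional points $s\pm\varepsilon$ in the generalized scale, then interpolates those two isomorphisms with the parameter $1/2$, so that the target is \eqref{16f10} by definition and only the source needs to be identified with $H^{s,s/(2b);\varphi}(\Omega)$ (Lemma~\ref{16lem7.5}, itself a reiteration computation). You should restructure the exceptional case along these lines, or else supply the missing identification of the two interpolation constructions.
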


This theorem is known in the Sobolev case where $\varphi\equiv1$.
It is proved in this case by Agranovich and Vishik
\cite[Theorem~12.1]{AgranovichVishik64} under the restriction  $s,s/(2b)\in\mathbb{Z}$. This restriction can be removed; see, e.g.,
Lions and Magenes' monograph \cite[Theorem~6.2]{LionsMagenes72ii}
in the case of $b=1$ and the normal boundary conditions, and Zhitarashu's paper \cite[Theorem~9.1]{Zhitarashu85} in the general case.
Their results include the limiting case of $s=\sigma_0$.
Note that these papers deal with another equivalent form of the compatibility conditions \eqref{16f8}, which will be discussed in Appendix.

We will deduce Theorem~\ref{16th4.1} from the Sobolev case with the help of the interpolation with a function parameter between Hilbert spaces.
This will be done in Section~\ref{16sec7} after we investigate the necessary interpolation properties of the spaces used in \eqref{16f11}.

Note that we have to define the range of the isomorphism \eqref{16f11} by the interpolation formula \eqref{16f10} in the  $s\in E$ case because
this isomorphism can cease holding if we define
$\mathcal{Q}^{s-2m,(s-2m)/(2b);\varphi}$ in the way used in the $s\notin E$ case.
This is suggested by Solonnikov's result \cite[Section~6]{Solonnikov64} concerning the heat equation in Sobolev spaces; see also \cite[Remark~6.4]{LionsMagenes72ii}.

Let us discuss the regularity properties of the generalized solution to the parabolic problem \eqref{16f1}--\eqref{16f3}. We assume further in this section that the right-hand sides $f$, $g_{j}$, and $h_k$ of the problem are arbitrary distributions given respectively on $\Omega$, $S$ and $G$. A function
$u\in H^{\sigma_0,\sigma_0/(2b)}(\Omega)$ is said to be a (strong) generalized solution to this problem if
\begin{equation*}
\Lambda u=(f,g_{1},\dots,g_{m},h_0,\dots,h_{\varkappa-1});
\end{equation*}
here, $\Lambda$ is the bounded operator \eqref{16f4a} for $s:=\sigma_0$. It follows from this condition that
\begin{equation}\label{16f12}
(f,g_{1},\dots,g_{m},h_0,\dots,h_{\varkappa-1})\in
\mathcal{Q}^{\sigma_0-2m,(\sigma_0-2m)/(2b)}.
\end{equation}
Moreover \cite[Theorem~9.1]{Zhitarashu85}, the problem has a unique solution $u\in H^{\sigma_0,\sigma_0/(2b)}(\Omega)$ for every vector \eqref{16f12}. We see now that the following result is a direct consequence of Theorem~\ref{16th4.1}:

\begin{corollary}\label{16cor4.2}
Assume that a function $u\in H^{\sigma_0,\sigma_0/(2b)}(\Omega)$
is a generalized solution to the parabolic problem
\eqref{16f1}--\eqref{16f3} whose right-hand sides satisfy the condition
$$
(f,g_{1},\dots,g_{m},h_0,\dots,h_{\varkappa-1})\in
\mathcal{Q}^{s-2m,(s-2m)/(2b);\varphi}
$$
for some $s>\sigma_0$ and $\varphi\in\mathcal{M}$.
Then $u\in H^{s,s/(2b);\varphi}(\Omega)$.
\end{corollary}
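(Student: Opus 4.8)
The plan is to read off the corollary from Isomorphism Theorem~\ref{16th4.1} by a uniqueness-of-solution argument. Write $F:=(f,g_{1},\dots,g_{m},h_0,\dots,h_{\varkappa-1})$. By the definition of a generalized solution, $\Lambda u=F$, where $\Lambda$ is the bounded operator \eqref{16f4a} with $s:=\sigma_0$; in particular $F\in\mathcal{Q}^{\sigma_0-2m,(\sigma_0-2m)/(2b)}$, as noted in \eqref{16f12}. By hypothesis we are moreover given that $F\in\mathcal{Q}^{s-2m,(s-2m)/(2b);\varphi}$ for the prescribed $s>\sigma_0$ and $\varphi\in\mathcal{M}$.

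First I would apply Theorem~\ref{16th4.1}: since $s>\sigma_0$ and $\varphi\in\mathcal{M}$, the mapping \eqref{16f4} extends to an isomorphism $\Lambda:H^{s,s/(2b);\varphi}(\Omega)\leftrightarrow\mathcal{Q}^{s-2m,(s-2m)/(2b);\varphi}$, so there is a (unique) $v\in H^{s,s/(2b);\varphi}(\Omega)$ with $\Lambda v=F$, the operator here being \eqref{16f11}. By the embedding \eqref{8f5a} with $s_0:=\sigma_0<s$ we have $H^{s,s/(2b);\varphi}(\Omega)\hookrightarrow H^{\sigma_0,\sigma_0/(2b)}(\Omega)$ continuously, hence $v\in H^{\sigma_0,\sigma_0/(2b)}(\Omega)$ as well. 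Next I would check that the two versions of $\Lambda$, namely \eqref{16f11} and \eqref{16f4a} with $s=\sigma_0$, agree on $v$: both are continuous extensions of the single mapping \eqref{16f4} defined on $C^{\infty}(\overline{\Omega})$, and the restriction to $H^{s,s/(2b);\varphi}(\Omega)$ of the level-$\sigma_0$ operator is continuous (it is the composition of the above embedding with \eqref{16f4a}) and still extends \eqref{16f4}; since the set $\{w\!\upharpoonright\!\overline{\Omega}:w\in C^{\infty}_{0}(\mathbb{R}^{n+1})\}$ is dense in $H^{s,s/(2b);\varphi}(\Omega)$, uniqueness of the continuous extension forces the two operators to coincide there, in particular at $v$. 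Thus $\Lambda v=F=\Lambda u$ with $\Lambda$ the operator \eqref{16f4a} at level $\sigma_0$.

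Finally I would use the injectivity of \eqref{16f4a} with $s=\sigma_0$ on $H^{\sigma_0,\sigma_0/(2b)}(\Omega)$: by \cite[Theorem~9.1]{Zhitarashu85} (equivalently, the Sobolev limiting case $\varphi\equiv1$, $s=\sigma_0$ that is included in the discussion following Theorem~\ref{16th4.1}) the problem has a unique solution in $H^{\sigma_0,\sigma_0/(2b)}(\Omega)$ for every admissible right-hand side. Hence $u=v$, and therefore $u=v\in H^{s,s/(2b);\varphi}(\Omega)$, which is the claim. I expect no genuine obstacle: the only point needing a word of care is the consistency of $\Lambda$ at the two regularity levels, and this is immediate from the density of smooth functions together with the continuous embedding \eqref{8f5a}; everything else is a direct invocation of Theorem~\ref{16th4.1}.
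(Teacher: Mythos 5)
Your proof is correct and follows essentially the same route as the paper: the authors present Corollary~\ref{16cor4.2} as a direct consequence of Theorem~\ref{16th4.1} combined with the unique solvability in $H^{\sigma_0,\sigma_0/(2b)}(\Omega)$ from \cite[Theorem~9.1]{Zhitarashu85}, which is precisely your argument (solve at level $s$ via the isomorphism, embed, and identify the two solutions by uniqueness). Your extra care about the consistency of the two continuous extensions of $\Lambda$ is a detail the paper leaves implicit, and you handle it correctly via density of smooth functions.
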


Let us formulate a local version of this result.
Let $U$ be an open subset of $\mathbb{R}^{n+1}$ such that $\Omega_0:=U\cap\Omega\neq\emptyset$ and $U\cap\Gamma=\emptyset$.
Put $\Omega':=U\cap\partial\overline{\Omega}$, $S_0:=U\cap S$, $S':=U\cap \{(x,\tau);x\in\Gamma\}$, and $G_0:=U\cap G$.
We need to introduce local versions of the spaces
$H^{s,s/(2b);\varphi}(\Omega)$,  $H^{s,s/(2b);\varphi}(S)$ and $H^{s;\varphi}(G)$
with $s>0$ and $\varphi\in\mathcal{M}$.

We let $H^{s,s/(2b);\varphi}_{\mathrm{loc}}(\Omega_0,\Omega')$ denote the linear space of all distributions $u$ in $\Omega$
such that $\chi u\in H^{s,s/(2b);\varphi}(\Omega)$ for every function $\chi\in C^\infty (\overline\Omega)$ subject to $\mathrm{supp}\,\chi\subset\Omega_0\cup\Omega'$.
Analogously, $H^{s,s/(2b);\varphi}_{\mathrm{loc}}(S_0,S')$ denotes the linear space of all distributions $v$ on $S$ such that $\chi v\in H^{s,s/(2b);\varphi}(S)$ for every function $\chi\in C^\infty (\overline S)$ subject to $\mathrm{supp}\,\chi\subset S_0\cup S'$. Finally, $H^{s;\varphi}_{\mathrm{loc}}( G_0)$ stands for the linear space of all distributions $w$ in $G$
such that $\chi w\in H^{s;\varphi}(G)$ for every function $\chi\in C^\infty (\overline G)$ satisfying $\mathrm{supp}\,\chi\subset G_0$.

\begin{theorem}\label{16th4.3}
Let $s>\sigma_0$ and $\varphi\in\mathcal{M}$. Assume that a function
$u\in H^{\sigma_0,\sigma_0/(2b)}(\Omega)$ is a generalized solution to the parabolic problem \eqref{16f1}--\eqref{16f3} whose right-hand sides satisfy the following conditions:
\begin{align}\label{16f13}
f&\in H^{s-2m,(s-2m)/(2b);\varphi}_{\mathrm{loc}}
(\Omega_0,\Omega'),
\\ \label{16f14}
g_{j}&\in H^{s-m_j-1/2,(s-m_j-1/2)/(2b);\varphi}_{\mathrm{loc}}
(S_0,S')\quad\mbox{for each}\quad j\in\{1,\dots,m\},
\\ \label{16f15}
h_{k}&\in H^{s-2bk-b;\varphi}_{\mathrm{loc}}
( G_0)\quad\mbox{for each}\quad k\in\{0,\dots,\varkappa-1\}.
\end{align}
Then $u\in H^{s,s/(2b);\varphi}_{\mathrm{loc}}(\Omega_0,\Omega')$.
\end{theorem}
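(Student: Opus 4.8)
The plan is to derive Theorem~\ref{16th4.3} from the global Isomorphism Theorem~\ref{16th4.1} (or, more precisely, from Corollary~\ref{16cor4.2}) by a standard localization-and-bootstrap argument. The main point is that the parabolic operator $\Lambda$ commutes with multiplication by smooth cut-off functions up to lower-order perturbations, so a local smoothness hypothesis on the data can be "pushed through" $\Lambda^{-1}$ onto the solution. First I would fix an arbitrary $\chi\in C^\infty(\overline\Omega)$ with $\operatorname{supp}\chi\subset\Omega_0\cup\Omega'$ and reduce the claim to showing $\chi u\in H^{s,s/(2b);\varphi}(\Omega)$. The key commutator observation is that $A(\chi u)=\chi Au+A'u$, where $A'=[A,\chi]$ is a differential expression whose order in $x$ is at most $2m-1$ and whose order in $t$ is at most $\varkappa-1$ (since the coefficient of the top $t$-derivative in $A$ is a function of $(x,t)$ only, the pure-$t$ top term is killed by the bracket); likewise $B_j(\chi u)\!\upharpoonright\!S=\chi B_ju\!\upharpoonright\!S+B_j'u\!\upharpoonright\!S$ with $\operatorname{ord}_xB_j'\le m_j-1$, and $(\partial_t^k(\chi u))\!\upharpoonright\!G=\chi\!\upharpoonright\!G\cdot(\partial_t^ku)\!\upharpoonright\!G$ plus terms involving only $\partial_t^l u\!\upharpoonright\!G$ with $l<k$. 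Hence $\Lambda(\chi u)=\chi\cdot\Lambda u+(\text{terms of the form }\mathcal{R}u)$, where $\mathcal{R}$ is a bounded operator from $H^{\sigma,\sigma/(2b);\psi}(\Omega)$ into $\mathcal{H}^{\sigma-2m+1,(\sigma-2m+1)/(2b);\psi}$ for every admissible $(\sigma,\psi)$, i.e. $\mathcal{R}$ gains one half-derivative relative to the full operator.

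The second step is the bootstrap. By hypothesis $u\in H^{\sigma_0,\sigma_0/(2b)}(\Omega)$; I would use the local hypotheses \eqref{16f13}--\eqref{16f15} together with the product rule for the spaces $H^{\bullet;\varphi}$ (multiplication by a $C^\infty$ function is bounded on each of these spaces, which follows from the corresponding Sobolev statements and interpolation — cf.\ the properties established in Section~\ref{16sec3}) to see that $\chi_1\cdot\Lambda u$ has the components $\chi_1 f\in H^{s-2m,(s-2m)/(2b);\varphi}(\Omega)$, etc., whenever $\operatorname{supp}\chi_1$ is small enough to lie inside $\Omega_0\cup\Omega'$. Starting from regularity $\sigma_0$ one improves the regularity of $\chi u$ in steps of size $1/2$: at stage $N$ one knows $\chi' u\in H^{\sigma_N,\sigma_N/(2b)}(\Omega)$ for all admissible cut-offs $\chi'$, with $\sigma_N=\sigma_0+N/2$; picking a cut-off $\chi$ and a slightly larger cut-off $\chi_1$ equal to $1$ on $\operatorname{supp}\chi$ with $\operatorname{supp}\chi_1\subset\Omega_0\cup\Omega'$, one has $\mathcal{R}(\chi_1 u)\in\mathcal{H}^{\sigma_N-2m+1,\ldots}$ and $\chi_1\Lambda u\in\mathcal{H}^{\min\{s,\sigma_{N+1}\}-2m,\ldots}$ satisfies the compatibility conditions (the compatibility conditions are preserved under multiplication by a smooth function because the relations \eqref{16f8} are local and the bracket terms contribute consistently to both sides — this uses $U\cap\Gamma=\emptyset$, so near $\Omega'$ the boundary conditions are vacuous), whence $\Lambda(\chi u)\in\mathcal{Q}^{\min\{s,\sigma_{N+1}\}-2m,\ldots;\varphi}$ and Corollary~\ref{16cor4.2} gives $\chi u\in H^{\min\{s,\sigma_{N+1}\},\ldots;\varphi}(\Omega)$. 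Iterating finitely many times reaches regularity $s$ (with the function parameter $\varphi$ switched on at the final step, or carried along throughout once $\sigma_N\ge s-2m$ forces using the $\varphi$-version of the data hypotheses).

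The third, essentially bookkeeping, step is to handle the geometry of the sets $\Omega_0,\Omega',S_0,S',G_0$ correctly: one must check that for a given cut-off $\chi$ supported in $\Omega_0\cup\Omega'$ there exist nested cut-offs $\chi\prec\chi_1\prec\chi_2\prec\cdots$ still supported in $\Omega_0\cup\Omega'$, and that their restrictions to $S$ and to $G$ are supported in $S_0\cup S'$ and $G_0$ respectively, so that the local data hypotheses actually apply to $\chi_\ell f,\chi_\ell\!\upharpoonright\!S\cdot g_j,\chi_\ell\!\upharpoonright\!G\cdot h_k$. This is where the assumptions $U\cap\Gamma=\emptyset$ and $\Omega':=U\cap\partial\overline\Omega$ are used: they guarantee that $\operatorname{supp}\chi$ meets the boundary of $\overline\Omega$ only in the top and bottom faces $G_0$ and $S'$, so no lateral boundary regularity is ever needed and the trace/compatibility analysis is confined to $t=0$ and $t=\tau$.

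I expect the main obstacle to be the verification that multiplication by $\chi$ preserves the compatibility conditions defining $\mathcal{Q}^{\bullet;\varphi}$ — i.e.\ that $\chi\cdot\Lambda u+\mathcal{R}u\in\mathcal{Q}^{\bullet;\varphi}$ when $\Lambda u$ has the prescribed local regularity. For $s\notin E$ this is a direct computation: the relations \eqref{16f8} are differential identities in which each side transforms the same way under the Leibniz rule, so they are stable under multiplication by a smooth function; one must track that the commutator contributions to the left side \eqref{16f9B} match those coming from substituting the modified $v_k$'s from \eqref{16f9}. For $s\in E$ the space $\mathcal{Q}^{\bullet;\varphi}$ is defined by interpolation \eqref{16f10}, so one instead invokes the interpolation property (Section~\ref{16sec6}): multiplication by $\chi$ is bounded on both endpoint spaces $\mathcal{Q}^{s\mp\varepsilon-2m,\ldots;\varphi}$, hence on the interpolation space, by the interpolation functor's behaviour on operators. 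Modulo this point and the routine product-rule and trace lemmas, the argument is the standard elliptic/parabolic interior-regularity bootstrap transplanted to the Hörmander scale.
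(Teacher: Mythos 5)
Your proposal is correct and follows essentially the same route as the paper: the commutator identity $\Lambda(\chi u)=\chi\,\Lambda u+\Lambda'(\eta u)$ with a lower-order remainder, verification of the compatibility conditions for $\Lambda(\chi u)$ via the fact that $\operatorname{supp}\chi$ stays away from $\Gamma$ (with the $s\in E$ case handled by interpolating the multiplication operator), an appeal to Corollary~\ref{16cor4.2}, and a finite bootstrap (the paper uses steps of size $1$ rather than $1/2$, and treats the landing step separately when $s\in\mathbb{Z}$). One small correction: $U\cap\Gamma=\emptyset$ only excludes the bottom edge $\Gamma\times\{0\}$, so $\operatorname{supp}\chi$ may well meet the lateral surface $S$ and hypothesis \eqref{16f14} is genuinely used there; what the condition buys is that the compatibility relations \eqref{16f8}, which live on $\Gamma$ at $t=0$, hold trivially for the localized data.
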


If $\Omega'=\emptyset$, Theorem~\ref{16th4.3} asserts that
the regularity of $u$ increases on neighbourhoods of internal points
of $\overline{\Omega}$. If $G_0=\emptyset$, this theorem states
that the regularity of $u(x,t)$ increases whenever $t>0$. In this
case, the theorem follows directly from \cite[Theorem~4.3]{LosMikhailetsMurach17CPAA} provided $\sigma_0/(2b)\in\mathbb{Z}$.
Remark that we restrict ourselves to the case $U\cap\Gamma=\emptyset$ because
the conclusion of Theorem~\ref{16th4.3} is not true in the general case.

Using the spaces introduced, we obtain sufficient conditions under which the generalized solution $u$ and its generalized derivatives of a prescribed order are continuous on $\Omega_0\cup\Omega'$.

\begin{theorem}\label{16th4.4}
Let an integer $p\geq0$ satisfy $p+b+n/2>\sigma_{0}$. Assume that a function $u\in H^{\sigma_0,\sigma_0/(2b)}(\Omega)$ is a generalized solution to the parabolic problem \eqref{16f1}--\eqref{16f3} whose right-hand sides satisfy conditions \eqref{16f13}--\eqref{16f15} for $s:=p+b+n/2$ and some function parameter $\varphi\in\mathcal{M}$ subject to
\begin{equation}\label{9f4.7}
\int\limits_{1}^{\infty}\frac{dr}{r\varphi^2(r)}<\infty.
\end{equation}
Then the solution $u(x,t)$ and all its generalized derivatives
$D_{x}^{\alpha}\partial_{t}^{\beta}u(x,t)$ with $|\alpha|+2b\beta\leq p$ are continuous on $\Omega_0\cup\Omega'$.
\end{theorem}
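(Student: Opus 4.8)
The plan is to derive Theorem~\ref{16th4.4} from the local regularity Theorem~\ref{16th4.3} together with a Sobolev-type embedding of the generalized anisotropic H\"ormander space $H^{s,s/(2b);\varphi}(\mathbb{R}^{n+1})$ into a class of functions with continuous generalized derivatives. Set $s:=p+b+n/2$; the hypothesis $p+b+n/2>\sigma_{0}$ means $s>\sigma_0$, so conditions \eqref{16f13}--\eqref{16f15} are exactly those of Theorem~\ref{16th4.3} for this $s$, and that theorem yields $u\in H^{s,s/(2b);\varphi}_{\mathrm{loc}}(\Omega_0,\Omega')$. Since $U\cap\Gamma=\emptyset$, the set $\Omega_0\cup\Omega'=U\cap\overline{\Omega}$ is open in $\overline{\Omega}$ and avoids the lateral boundary $S$. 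It therefore suffices to show the following: for each point $z_0\in\Omega_0\cup\Omega'$ and each $\chi\in C^\infty(\overline{\Omega})$ with $\mathrm{supp}\,\chi\subset\Omega_0\cup\Omega'$ and $\chi\equiv1$ near $z_0$, the distribution $D^\alpha_x\partial^\beta_t(\chi u)$ with $|\alpha|+2b\beta\leq p$ is represented by a continuous function on $\overline{\Omega}$; for then, since $\chi\equiv1$ near $z_0$ and $\chi u\in H^{s,s/(2b);\varphi}(\Omega)$ by Theorem~\ref{16th4.3}, the derivative $D^\alpha_x\partial^\beta_t u$ coincides near $z_0$ with a continuous function, and the arbitrariness of $z_0$ gives the assertion.

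The heart of the matter is the embedding
\begin{equation*}
H^{s,s/(2b);\varphi}(\mathbb{R}^{n+1})\hookrightarrow
\bigl\{w:\ D^\alpha_x\partial^\beta_t w\in C_b(\mathbb{R}^{n+1})\text{ whenever }|\alpha|+2b\beta\leq p\bigr\},\qquad s=p+b+n/2,
\end{equation*}
valid for every $\varphi\in\mathcal{M}$ satisfying \eqref{9f4.7}, where $C_b$ denotes the bounded continuous functions. To prove it I would argue on the Fourier side. Writing $\mu$ for the regularity index \eqref{16f70} with $k:=n+1$ and $\gamma:=1/(2b)$, and $\langle\xi\rangle:=(1+|\xi'|^2+|\xi_{n+1}|^{1/b})^{1/2}$, one has $\mu(\xi)=\langle\xi\rangle^{s}\varphi(\langle\xi\rangle)$ and $|\xi'|^{|\alpha|}|\xi_{n+1}|^{\beta}\leq\langle\xi\rangle^{|\alpha|+2b\beta}\leq\langle\xi\rangle^{p}$. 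Since $\widehat{D^\alpha_x\partial^\beta_t w}(\xi)$ equals $\xi^\alpha(i\xi_{n+1})^\beta\widehat{w}(\xi)$ up to sign, the Cauchy--Schwarz inequality gives
\begin{equation*}
\int_{\mathbb{R}^{n+1}}\bigl|\xi^\alpha\xi_{n+1}^\beta\,\widehat{w}(\xi)\bigr|\,d\xi
\leq\|w\|_{H^{s,s/(2b);\varphi}(\mathbb{R}^{n+1})}
\left(\int_{\mathbb{R}^{n+1}}\frac{\langle\xi\rangle^{2p}}{\mu^2(\xi)}\,d\xi\right)^{1/2}
=\|w\|_{H^{s,s/(2b);\varphi}(\mathbb{R}^{n+1})}
\left(\int_{\mathbb{R}^{n+1}}\frac{d\xi}{\langle\xi\rangle^{n+2b}\varphi^2(\langle\xi\rangle)}\right)^{1/2}.
\end{equation*}
Substituting $\xi_{n+1}=\eta|\eta|^{2b-1}$ (so that $|\xi_{n+1}|^{1/b}=|\eta|^2$ and $d\xi_{n+1}=2b|\eta|^{2b-1}\,d\eta$) and then passing to polar coordinates in $\zeta:=(\xi',\eta)\in\mathbb{R}^{n+1}$, one bounds the last integral by a constant multiple of $\int_{0}^{\infty}\rho^{n+2b-1}(1+\rho^2)^{-(n+2b)/2}\varphi^{-2}((1+\rho^2)^{1/2})\,d\rho$: its part over $\rho\leq1$ is finite because $\varphi$ and $1/\varphi$ are bounded on compact subintervals of $[1,\infty)$, while for $\rho\geq1$ the substitution $r=(1+\rho^2)^{1/2}$ turns the integrand into one comparable with $1/(r\varphi^2(r))$, so finiteness is precisely \eqref{9f4.7}. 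Hence $\xi^\alpha\xi_{n+1}^\beta\widehat{w}\in L_1(\mathbb{R}^{n+1})$, and by the Riemann--Lebesgue lemma and Fourier inversion $D^\alpha_x\partial^\beta_t w$ is bounded and continuous.

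It remains to transfer this to $\Omega$. Given $z_0$ and $\chi$ as above (such $\chi$ exists because $\Omega_0\cup\Omega'$ is open in $\overline{\Omega}$), Theorem~\ref{16th4.3} gives $\chi u\in H^{s,s/(2b);\varphi}(\Omega)$, so by the very definition of this space there is $w\in H^{s,s/(2b);\varphi}(\mathbb{R}^{n+1})$ with $w\!\upharpoonright\!\Omega=\chi u$. By the embedding above, $D^\alpha_x\partial^\beta_t w$ is continuous on $\mathbb{R}^{n+1}$ for $|\alpha|+2b\beta\leq p$; since restriction commutes with differentiation, $D^\alpha_x\partial^\beta_t(\chi u)$ coincides on $\Omega$ with the function $(D^\alpha_x\partial^\beta_t w)\!\upharpoonright\!\overline{\Omega}$, which is continuous on $\overline{\Omega}$. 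Combined with the reduction in the first paragraph, this proves that $u$ and all its generalized derivatives $D^\alpha_x\partial^\beta_t u$ with $|\alpha|+2b\beta\leq p$ are continuous on $\Omega_0\cup\Omega'$. The main obstacle is the embedding of the second paragraph, i.e.\ checking that the anisotropic Fourier integral collapses exactly to the Dini-type condition \eqref{9f4.7}; the localization and the passage between $\mathbb{R}^{n+1}$ and $\Omega$ are routine. If an anisotropic embedding of this type is already available in the companion papers (e.g.\ \cite{LosMikhailetsMurach17CPAA, Los16JMathSci}), the second paragraph can be shortened to a citation, retaining only the verification that \eqref{9f4.7} is the sharp hypothesis.
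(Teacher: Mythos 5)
Your proposal is correct and follows essentially the same route as the paper: apply Theorem~\ref{16th4.3} to get $\chi u\in H^{s,s/(2b);\varphi}(\Omega)$ with $s=p+b+n/2$, extend to some $w\in H^{s,s/(2b);\varphi}(\mathbb{R}^{n+1})$, and invoke the embedding of that space into functions with continuous derivatives $D_x^{\alpha}\partial_t^{\beta}$ for $|\alpha|+2b\beta\leq p$ under condition~\eqref{9f4.7}. The only difference is that the paper cites this embedding as Lemma~8.1(i) of \cite{LosMikhailetsMurach17CPAA} while you prove it directly via Cauchy--Schwarz on the Fourier side, and your computation (reducing the anisotropic integral to $\int_1^\infty dr/(r\varphi^2(r))$ via the substitution $\xi_{n+1}=\eta|\eta|^{2b-1}$ and polar coordinates) is sound.
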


As to the conclusion of this theorem, note that a distribution $v$ in $\Omega$ is called continuous on the set $\Omega_0\cup\Omega'$ if there exists a continuous function $v_{0}$ on $\Omega_0\cup\Omega'$ such that
\begin{equation}\label{rem-to-th4.4}
v(\omega)=\int\limits_{\Omega_0}v_{0}(x,t)\,\omega(x,t)\,dxdt
\end{equation}
for every test function $\omega\in C^{\infty}(\Omega)$ subject to $\mathrm{supp}\,\omega\subset\Omega_0$. Here, $v(\omega)$ stands for the value of the functional $v$ at $\omega$. (It is not difficult to show that this definition is equivalent to the following: $\chi v\in C(\overline\Omega)$ for every function $\chi\in C^\infty (\overline\Omega)$ such that $\mathrm{supp}\,\chi\subset\Omega_0\cup\Omega'$.)

\begin{remark}\label{16rem4.5}\rm
Condition \eqref{9f4.7} in Theorem~\ref{16th4.4} is sharp. Namely, let $s:=p+b+n/2$ and $\varphi\in\mathcal{M}$, and assume that for every function $u\in\nobreak H^{\sigma_0,\sigma_0/(2b)}(\Omega)$ the following implication holds:
\begin{equation}\label{f-rem4.5}
\begin{aligned}
&\bigl(u\;\mbox{is a solution to problem \eqref{16f1}--\eqref{16f3} for some right-hand sides \eqref{16f13}--\eqref{16f15}}\bigr)\\
&\Longrightarrow\bigl(u\;\mbox{satisfies the conclusion of Theorem \ref{16th4.4}}\bigr).
\end{aligned}
\end{equation}
Then $\varphi$ satisfies condition \eqref{9f4.7}.
\end{remark}

Note also that the use of generalized Sobolev spaces allows us to obtain a finer result then it is possible in the framework of Sobolev spaces. Namely, if we formulate an analog of Theorem~\ref{16th4.4} for the Sobolev case of $\varphi\equiv\nobreak1$, we have to replace the condition of this theorem with a stronger one.  Thus, we have to claim that the right-hand sides of the problem \eqref{16f1}--\eqref{16f3} satisfy conditions \eqref{16f13}--\eqref{16f15} for certain $s>p+b+n/2$. This claim is stronger than the condition of Theorem~\ref{16th4.4} due to the left-hand embeddings in~\eqref{8f5a} and \eqref{8f5b}. This theorem can be used to obtain sufficient conditions under which the generalized solution $u$ to the parabolic problem is classical (see \cite{Los17UMG11}).

We will prove Theorems~\ref{16th4.3} and \ref{16th4.4} at the end of
Section~\ref{16sec7} and then substantiate Remark~\ref{16rem4.5}.

\section{Interpolation with a function parameter between Hilbert spaces}\label{16sec6}

This method of interpolation is a natural generalization of the classical interpolation method by S.~Krein and J.-L.~Lions (see their monographs \cite[Chapter~IV, Section~1, Subsection~10]{KreinPetuninSemenov82} and \cite[Chapter~1, Sections 2 and 5]{LionsMagenes72i}) to the case where a general enough function is used instead of a number as an interpolation parameter. We restrict ourselves to the case of separable complex Hilbert spaces and mainly follow the monograph \cite[Section~1.1]{MikhailetsMurach14}.

Let $X:=[X_{0},X_{1}]$ be an ordered pair of separable complex Hilbert spaces such that $X_1$ is a dense linear manifold in $X_0$ and that the
embedding $X_{1}\subseteq X_{0}$ is continuous.
This pair is called regular. For $X$ there is a positive-definite self-adjoint operator $J$ in $X_{0}$ with the domain $X_{1}$ such that $\|Jv\|_{X_{0}}=\|v\|_{X_{1}}$ for every $v\in X_{1}$. This operator is uniquely determined by $X$ and is called the generating operator for~$X$; see, e.g., \cite[Chapter~IV, Theorem~1.12]{KreinPetuninSemenov82}. The operator sets an isometric isomorphism between $X_{1}$ and $X_{0}$.

Let $\mathcal{B}$ denote the set of all Borel measurable functions $\psi:(0,\infty)\rightarrow(0,\infty)$ such that $\psi$ is bounded on each compact interval $[a,b]$, with $0<a<b<\infty$, and that $1/\psi$ is bounded on every semiaxis $[a,\infty)$, with $a>0$.

Choosing a function $\psi\in\mathcal{B}$ arbitrarily, we consider the (generally, unbounded) operator $\psi(J)$ in $X_{0}$ as the Borel function $\psi$ of $J$. This operator is built with the help of Spectral Theorem applied to the self-adjoint operator $J$. Let $[X_{0},X_{1}]_{\psi}$ or, simply, $X_{\psi}$ denote the domain of $\psi(J)$ endowed with the inner product $(v_{1},v_{2})_{X_{\psi}}:=(\psi(J)v_{1},\psi(J)v_{2})_{X_{0}}$
and the corresponding norm $\|v\|_{X_{\psi}}:=\|\psi(J)v\|_{X_{0}}$. The linear space $X_{\psi}$ is Hilbert and separable with respect to this norm.

A function $\psi\in\mathcal{B}$ is called an interpolation parameter if the following condition is satisfied for all regular pairs $X=[X_{0},X_{1}]$ and $Y=[Y_{0},Y_{1}]$ of Hilbert spaces and for an arbitrary linear mapping $T$ given on $X_{0}$: if the restriction of $T$ to $X_{j}$ is a bounded operator $T:X_{j}\rightarrow Y_{j}$ for each $j\in\{0,1\}$, then the restriction of $T$ to
$X_{\psi}$ is also a bounded operator $T:X_{\psi}\rightarrow Y_{\psi}$.

If $\psi$ is an interpolation parameter, we will say that the Hilbert space $X_{\psi}$ is obtained by the interpolation with the function parameter $\psi$ of the pair $X=\nobreak[X_{0},X_{1}]$ or, otherwise speaking, between the spaces $X_{0}$ and $X_{1}$. In this case, the dense and continuous embeddings $X_{1}\hookrightarrow
X_{\psi}\hookrightarrow X_{0}$ hold.

The class of all interpolation parameters (in the sense of the given definition) admits a constructive description. Namely, a function $\psi\in\mathcal{B}$ is an interpolation parameter if and only if $\psi$ is pseudoconcave in a neighbourhood of infinity. The latter property means that there exists a concave positive function $\psi_{1}(r)$ of $r\gg1$ that both the functions $\psi/\psi_{1}$ and $\psi_{1}/\psi$ are bounded in some neighbourhood of infinity. This criterion follows from Peetre's description of all interpolation functions for the weighted Lebesgue spaces \cite{Peetre66, Peetre68} (this result of Peetre is set forth in the monograph \cite[Theorem 5.4.4]{BerghLefstrem76}). The proof of the criterion is given, e.g., in \cite[Section 1.1.9]{MikhailetsMurach14}.

The application of this criterion to power functions gives the classical result by Krein and Lions. Namely, the function $\psi(r)\equiv r^{\theta}$ is an interpolation parameter if and only if $\nobreak{0\leq\theta\leq1}$. In this case, the exponent $\theta$ serves as a number parameter of the interpolation, and the interpolation space $X_{\psi}$ is also denoted by $X_{\theta}$. We used this interpolation in our definition \eqref{16f10}, with $\theta=1/2$.

For the readers' convenience, we formulate the general interpolation properties used systematically below. The first of them enables us to reduce the interpolation of subspaces to the interpolation of the whole spaces (see \cite[Theorem~1.6]{MikhailetsMurach14} or \cite[Section~1.17.1, Theorem~1]{Triebel95}). As usual, subspaces of normed spaces are supposed to be closed. Generally, we consider nonorthogonal projectors onto subspaces of a Hilbert space.

\begin{proposition}\label{8prop1}
Let $X=[X_{0},X_{1}]$ be a regular pair of Hilbert spaces, and let $Y_{0}$ be a subspace of $X_{0}$. Then $Y_{1}:=X_{1}\cap Y_{0}$ is a subspace of $X_{1}$. Suppose that there exists a linear mapping $P$ on $X_{0}$ such that $P$ is a projector of the space $X_{j}$ onto its subspace $Y_{j}$ for each $j\in\{0,\,1\}$. Then the pair $[Y_{0},Y_{1}]$ is regular, and $[Y_{0},Y_{1}]_{\psi}=X_{\psi}\cap Y_{0}$ with equivalence of norms for an arbitrary interpolation parameter~$\psi\in\mathcal{B}$. Here, $X_{\psi}\cap Y_{0}$ is a subspace of $X_{\psi}$.
\end{proposition}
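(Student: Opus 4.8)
The plan is to reduce everything to the case where the projector $P$ is the \emph{orthogonal} projector of $X_0$ onto $Y_0$, and then invoke the behaviour of the generating operator under the interpolation with a function parameter. First I would check the elementary structural claims: since $Y_0$ is closed in $X_0$ and the embedding $X_1\hookrightarrow X_0$ is continuous, $Y_1:=X_1\cap Y_0$ is closed in $X_1$, hence a subspace; and if $P$ restricts to a projector $X_j\to Y_j$ for $j=0,1$, then $1-P$ restricts to a projector onto the complementary subspaces, so both $[Y_0,Y_1]$ and $[(1-P)X_0,(1-P)X_1]$ are regular pairs. The continuity of $P$ on $X_1$ together with its continuity on $X_0$ is exactly the hypothesis that lets us apply the interpolation-parameter property of $\psi$ to the operator $P$ itself.

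The core step is the identity $[Y_0,Y_1]_\psi = X_\psi\cap Y_0$ with equivalence of norms. The inclusion "$\subseteq$" is essentially formal: the embedding $Y_j\hookrightarrow X_j$ is bounded for $j=0,1$, so by the interpolation-parameter property $\psi$ it induces a bounded embedding $[Y_0,Y_1]_\psi\hookrightarrow X_\psi$; since moreover $[Y_0,Y_1]_\psi\subseteq Y_0$, we get $[Y_0,Y_1]_\psi\subseteq X_\psi\cap Y_0$ continuously. For the reverse inclusion I would use the projector: regard $P$ as a linear map $X_0\to Y_0$; it is bounded $X_j\to Y_j$ for $j=0,1$, so $\psi$ being an interpolation parameter yields a bounded operator $P:X_\psi\to[Y_0,Y_1]_\psi$. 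If $v\in X_\psi\cap Y_0$ then $Pv=v$ (because $P$ acts as the identity on $Y_0$), whence $v\in[Y_0,Y_1]_\psi$ with $\|v\|_{[Y_0,Y_1]_\psi}\le\|P\|\,\|v\|_{X_\psi}$. Combined with the previous paragraph this gives the two-sided norm estimate, i.e. equality of the spaces with equivalent norms; in particular $X_\psi\cap Y_0$ is closed in $X_\psi$, hence a subspace, which is the last assertion.

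The main obstacle, and the only place where real care is needed, is making sense of $X_\psi\cap Y_0$ as a \emph{set}: $X_\psi$ is a priori a subset of $X_0$ (the domain of $\psi(J)$ with a stronger norm), so the intersection with the closed subspace $Y_0\subseteq X_0$ is literally well defined, and one must verify that the norm it inherits from $X_\psi$ is complete. This is automatic once the identification with $[Y_0,Y_1]_\psi$ is established, since the latter is complete by construction of the interpolation space; but to run the argument one should first know that $P$ maps $X_\psi$ into $[Y_0,Y_1]_\psi$, which in turn presupposes that $[Y_0,Y_1]$ is a regular pair — hence the order above, establishing regularity of $[Y_0,Y_1]$ \emph{before} invoking the interpolation-parameter property of $\psi$ for $P$. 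I would also note that this is precisely \cite[Theorem~1.6]{MikhailetsMurach14} (or \cite[Section~1.17.1, Theorem~1]{Triebel95} in the number-parameter case), so one may simply cite it; the sketch above is how that reference is proved.
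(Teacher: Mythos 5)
Your argument is correct and is essentially the proof of the result the paper relies on: the paper itself gives no proof of Proposition~\ref{8prop1}, citing \cite[Theorem~1.6]{MikhailetsMurach14} and \cite[Section~1.17.1, Theorem~1]{Triebel95}, and your two-inclusion retraction argument (interpolate the embeddings $Y_j\hookrightarrow X_j$ one way, the projector $P:X_j\to Y_j$ the other way, using $Pv=v$ on $Y_0$), preceded by the verification that $[Y_0,Y_1]$ is regular, is exactly how that reference proceeds. One caveat: your opening sentence announces a reduction to the case where $P$ is the \emph{orthogonal} projector of $X_0$ onto $Y_0$, which you then (rightly) never carry out --- the orthogonal projector onto $Y_0$ need not map $X_1$ boundedly onto $Y_1$, which is precisely why the proposition is stated for a general, possibly nonorthogonal, projector; the argument you actually give does not depend on that sentence and is sound without it.
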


The second property reduces the interpolation of orthogonal sums of Hilbert spaces to the interpolation of their summands (see \cite[Theorem~1.8]{MikhailetsMurach14}).

\begin{proposition}\label{8prop2}
Let $[X_{0}^{(j)},X_{1}^{(j)}]$, with $j=1,\ldots,q$, be a finite collection of regular pairs of Hilbert spaces. Then
$$
\biggl[\,\bigoplus_{j=1}^{q}X_{0}^{(j)},\,
\bigoplus_{j=1}^{q}X_{1}^{(j)}\biggr]_{\psi}=\,
\bigoplus_{j=1}^{q}\bigl[X_{0}^{(j)},\,X_{1}^{(j)}\bigr]_{\psi}
$$
with equality of norms for every function $\psi\in\mathcal{B}$.
\end{proposition}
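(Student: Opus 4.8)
The plan is to reduce the identity to a statement about the generating operators of the pairs involved and then to invoke the behaviour of the Borel functional calculus under orthogonal direct sums. Write $X_0:=\bigoplus_{j=1}^q X_0^{(j)}$ and $X_1:=\bigoplus_{j=1}^q X_1^{(j)}$, each equipped with its Hilbert direct-sum inner product. First I would check that the pair $[X_0,X_1]$ is regular: the embedding $X_1\hookrightarrow X_0$ is continuous because it acts componentwise and there are finitely many summands, and $X_1$ is dense in $X_0$ because each $X_1^{(j)}$ is dense in $X_0^{(j)}$. Let $J_j$ denote the generating operator of the pair $[X_0^{(j)},X_1^{(j)}]$, so that $J_j$ is positive-definite self-adjoint in $X_0^{(j)}$ with domain $X_1^{(j)}$ and $\|J_jv\|_{X_0^{(j)}}=\|v\|_{X_1^{(j)}}$ for every $v\in X_1^{(j)}$.

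The key step is to show that the generating operator $J$ of $[X_0,X_1]$ is exactly the orthogonal direct sum $\bigoplus_{j=1}^q J_j$. I would verify the three defining properties and then appeal to the uniqueness of the generating operator stated in the excerpt. A finite direct sum of self-adjoint operators is self-adjoint, and its domain consists of all tuples $(v_1,\dots,v_q)$ with $v_j\in\mathrm{dom}\,J_j$; here finiteness of $q$ is essential, since it makes the summability condition $\sum_j\|J_jv_j\|^2<\infty$ automatic, so that $\mathrm{dom}\bigl(\bigoplus_jJ_j\bigr)=\bigoplus_jX_1^{(j)}=X_1$. Positive-definiteness is inherited, because $\bigoplus_jJ_j\geq(\min_jc_j)\,I$ whenever $J_j\geq c_jI$ with $c_j>0$ (such $c_j$ exist since each embedding $X_1^{(j)}\hookrightarrow X_0^{(j)}$ is continuous). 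Finally the norm identity
\[
\Bigl\|\Bigl(\bigoplus_{j=1}^q J_j\Bigr)v\Bigr\|_{X_0}^2
=\sum_{j=1}^q\|J_jv_j\|_{X_0^{(j)}}^2
=\sum_{j=1}^q\|v_j\|_{X_1^{(j)}}^2=\|v\|_{X_1}^2
\]
holds for every $v=(v_1,\dots,v_q)\in X_1$. By uniqueness of the generating operator, $J=\bigoplus_{j=1}^q J_j$.

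Next I would transfer this factorization through the function $\psi$. Since the spectral measure of a direct sum of self-adjoint operators is the direct sum of the individual spectral measures, the Borel functional calculus commutes with direct sums, giving $\psi(J)=\bigoplus_{j=1}^q\psi(J_j)$ with $\mathrm{dom}\,\psi(J)=\bigoplus_{j=1}^q\mathrm{dom}\,\psi(J_j)$ (again the summability over the finitely many $j$ is automatic). By the spectral definition of the interpolation space, $X_\psi=\mathrm{dom}\,\psi(J)$ and $[X_0^{(j)},X_1^{(j)}]_\psi=\mathrm{dom}\,\psi(J_j)$, so this is precisely the asserted set equality. Moreover, for $v=(v_1,\dots,v_q)$ in this space,
\[
\|v\|_{X_\psi}^2=\|\psi(J)v\|_{X_0}^2
=\sum_{j=1}^q\|\psi(J_j)v_j\|_{X_0^{(j)}}^2
=\sum_{j=1}^q\|v_j\|_{[X_0^{(j)},X_1^{(j)}]_\psi}^2,
\]
which is the claimed equality of norms, the right-hand side being the Hilbert direct-sum norm of $\bigoplus_{j=1}^q[X_0^{(j)},X_1^{(j)}]_\psi$. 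Note that $\psi$ need only belong to $\mathcal{B}$, not be an interpolation parameter, since the argument uses only the spectral construction of $X_\psi$ and not the interpolation property itself.

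The main obstacle is the identification $J=\bigoplus_jJ_j$ together with the compatibility $\psi\bigl(\bigoplus_jJ_j\bigr)=\bigoplus_j\psi(J_j)$; once these two spectral-theoretic facts are in place, both the set equality and the \emph{exactness} of the norms follow mechanically. The finiteness of the collection enters precisely in guaranteeing that the domain of the direct-sum operator is the full algebraic direct sum, which is what upgrades the conclusion from equivalence to equality of norms.
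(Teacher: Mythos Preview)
Your argument is correct. The paper itself does not supply a proof of this proposition; it merely states the result and cites \cite[Theorem~1.8]{MikhailetsMurach14} for the proof. Your approach---identifying the generating operator of the direct-sum pair as $\bigoplus_j J_j$ via uniqueness, then using that the Borel functional calculus commutes with finite orthogonal direct sums---is the standard one and is essentially what appears in that reference. The observation that finiteness of $q$ is what makes the domain of $\bigoplus_j J_j$ equal to the full algebraic direct sum (rather than merely a dense subspace subject to a summability constraint) is exactly the point that secures equality, not just equivalence, of norms.
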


The third property  is  Reiteration Theorem for the interpolation \cite[Theorem~1.3]{MikhailetsMurach14}.

\begin{proposition}\label{8prop3}
Let $\alpha,\beta,\psi\in\mathcal{B}$, and suppose that the function $\alpha/\beta$ is bounded in a neighbourhood of infinity. Define the function $\omega\in\mathcal{B}$  by the formula
$\omega(r):=\alpha(r)\psi(\beta(r)/\alpha(r))$ for $r>0$. Then $\omega\in\mathcal{B}$, and $[X_{\alpha},X_{\beta}]_{\psi}=X_{\omega}$ with equality of norms for every regular pair $X$ of Hilbert spaces. Besides, if $\alpha,\beta,\psi$ are interpolation parameters, then
$\omega$ is also an interpolation parameter.
\end{proposition}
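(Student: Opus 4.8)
The plan is to realise all four spaces $X_\alpha$, $X_\beta$, $X_\omega$ and $[X_\alpha,X_\beta]_\psi$ through the Borel functional calculus of the single generating operator $J$ of the pair $X=[X_0,X_1]$, and then to reduce the asserted identity to the composition rule for functions of a self-adjoint operator. Recall that $J$ is positive-definite self-adjoint in $X_0$; since $X_1\hookrightarrow X_0$ continuously we have $\|v\|_{X_0}\le c\,\|Jv\|_{X_0}$ on $X_1$, so $J^{-1}$ is bounded and $\mathrm{Spec}\,J\subseteq[\kappa,\infty)$ for some $\kappa>0$. As $1/\eta$ is bounded on $[\kappa,\infty)$ for every $\eta\in\mathcal{B}$, the operator $\eta(J)$ is positive-definite self-adjoint with domain $X_\eta$ and $\|\eta(J)v\|_{X_0}=\|v\|_{X_\eta}$; in particular $\alpha(J)$ and $\beta(J)$ are the generating operators of $[X_0,X_\alpha]$ and $[X_0,X_\beta]$, and $\alpha(J)\colon X_\alpha\to X_0$ is an isometric isomorphism. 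The verification that $\omega\in\mathcal{B}$ is routine: on a compact $[a,b]\subset(0,\infty)$ the functions of $\mathcal{B}$ are bounded above and below, so $\beta/\alpha$ stays in a compact subinterval of $(0,\infty)$ and $\omega=\alpha\cdot\psi(\beta/\alpha)$ is bounded there, while on $[a,\infty)$ the hypothesis that $\alpha/\beta$ is bounded near infinity forces $\beta/\alpha\ge c_0>0$, whence $1/\psi(\beta/\alpha)$ is bounded and so is $1/\omega=(1/\alpha)\cdot(1/\psi(\beta/\alpha))$.

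Second, I would show that $[X_\alpha,X_\beta]$ is a regular pair whose generating operator is $J_1:=(\beta/\alpha)(J)$. The continuous embedding $X_\beta\hookrightarrow X_\alpha$ is exactly where the hypothesis enters: writing $\alpha(J)=(\alpha/\beta)(J)\,\beta(J)$ and using that $\alpha/\beta$ is bounded on $\mathrm{Spec}\,J$ (near infinity by assumption, on compacts by membership in $\mathcal{B}$), one gets $\|v\|_{X_\alpha}\le C\,\|v\|_{X_\beta}$. Density of $X_\beta$ in $X_\alpha$ follows from the spectral projections $E_n:=E([\kappa,n])$ of $J$: for $v\in X_\alpha$ one has $E_nv\in X_\beta$ since $\beta$ is bounded on $[\kappa,n]$, and $\alpha(J)E_nv=E_n\alpha(J)v\to\alpha(J)v$ in $X_0$, i.e. $E_nv\to v$ in $X_\alpha$. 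Finally, via the unitary $\alpha(J)\colon X_\alpha\to X_0$ the operator $J_1$ is unitarily equivalent to $(\beta/\alpha)(J)$ acting in $X_0$, which is positive-definite self-adjoint there (all these functions of $J$ commute); its domain in $X_\alpha$ is $\{v:\beta(J)v\in X_0\}=X_\beta$, and $\|J_1v\|_{X_\alpha}=\|\alpha(J)J_1v\|_{X_0}=\|\beta(J)v\|_{X_0}=\|v\|_{X_\beta}$. Hence $J_1$ is the generating operator for $[X_\alpha,X_\beta]$.

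Third, I would compute the interpolation space. By definition $[X_\alpha,X_\beta]_\psi$ is the domain of $\psi(J_1)$ with the norm $\|\psi(J_1)\cdot\|_{X_\alpha}$. The composition rule for the Borel functional calculus gives $\psi(J_1)=\psi\bigl((\beta/\alpha)(J)\bigr)=(\psi\circ(\beta/\alpha))(J)$, and since the resulting functions of $J$ commute,
\begin{equation*}
\alpha(J)\,\psi(J_1)=\bigl(\alpha\cdot(\psi\circ(\beta/\alpha))\bigr)(J)=\omega(J),\qquad \omega(r)=\alpha(r)\psi(\beta(r)/\alpha(r)).
\end{equation*}
Consequently the domain of $\psi(J_1)$ equals $\{v:\omega(J)v\in X_0\}=X_\omega$, and for such $v$
\begin{equation*}
\|v\|_{[X_\alpha,X_\beta]_\psi}=\|\psi(J_1)v\|_{X_\alpha}=\|\alpha(J)\psi(J_1)v\|_{X_0}=\|\omega(J)v\|_{X_0}=\|v\|_{X_\omega}.
\end{equation*}
Thus $[X_\alpha,X_\beta]_\psi=X_\omega$ with equality of norms.

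For the last assertion I would argue directly from the definition, using the identity just proved. Let $X$ and $Y$ be regular pairs and let $T$ be linear on $X_0$ with bounded restrictions $T\colon X_j\to Y_j$, $j=0,1$. Since $\alpha$ and $\beta$ are interpolation parameters, $T$ is bounded as $X_\alpha\to Y_\alpha$ and as $X_\beta\to Y_\beta$. The pairs $[X_\alpha,X_\beta]$ and $[Y_\alpha,Y_\beta]$ are regular by the second step, so the assumption that $\psi$ is an interpolation parameter yields boundedness of $T\colon[X_\alpha,X_\beta]_\psi\to[Y_\alpha,Y_\beta]_\psi$; by the third step these spaces are $X_\omega$ and $Y_\omega$, whence $T\colon X_\omega\to Y_\omega$ is bounded, and $\omega$ is an interpolation parameter. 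I expect the main obstacle to lie in the second step, that is, in the identification of the generating operator of $[X_\alpha,X_\beta]$ and, above all, in the density of $X_\beta$ in $X_\alpha$ together with the continuity of $X_\beta\hookrightarrow X_\alpha$; this is precisely where the membership of $\alpha,\beta$ in $\mathcal{B}$ and the hypothesis on $\alpha/\beta$ are needed to control these functions on the whole spectrum of $J$. The remaining steps are formal manipulations with the functional calculus.
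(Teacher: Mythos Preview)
The paper does not give its own proof of this proposition; it is quoted as a known result from \cite[Theorem~1.3]{MikhailetsMurach14}. Your argument is correct and is in fact the standard way to prove such a reiteration theorem: reduce everything to the Borel functional calculus of the single generating operator $J$ of $X=[X_0,X_1]$, identify the generating operator of the derived pair $[X_\alpha,X_\beta]$ as $(\beta/\alpha)(J)$ via the isometry $\alpha(J)\colon X_\alpha\to X_0$, and then use the composition rule $\psi\bigl((\beta/\alpha)(J)\bigr)=(\psi\circ(\beta/\alpha))(J)$ to obtain $\omega(J)$. Your identification of where the hypothesis on $\alpha/\beta$ enters---namely, to ensure both that $X_\beta\hookrightarrow X_\alpha$ continuously and that $\beta/\alpha$ is bounded away from zero on $\mathrm{Spec}\,J$, so that $1/\psi(\beta/\alpha)$ stays bounded---is accurate, and the density argument via spectral cut-offs $E([\kappa,n])$ is clean. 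The final step, deducing that $\omega$ is an interpolation parameter by composing the interpolation properties of $\alpha$, $\beta$, and $\psi$, is exactly right. This is essentially the proof given in the cited monograph.
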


Our proof of Theorem \ref{16th4.1} is based on the key fact that the interpolation with an appropriate function parameter between marginal Sobolev spaces in \eqref{8f5a} and \eqref{8f5b} gives the intermediate spaces $H^{s,s\gamma;\varphi}(\cdot)$ and $H^{s;\varphi}(\cdot)$ respectively. Let us formulate this property separately for isotropic and for anisotropic spaces.

\begin{proposition}\label{8prop4}
Let $s_{0},s,s_{1}\in\mathbb{R}$  satisfy
$s_{0}<s<s_{1}$, and let $\varphi\in\mathcal{M}$. Put
\begin{equation}\label{8f16}
\psi(r):=
\begin{cases}
\;r^{(s-s_{0})/(s_{1}-s_{0})}\,\varphi(r^{1/(s_{1}-s_{0})})&\text{if}
\quad r\geq1,\\
\;\varphi(1) & \text{if}\quad0<r<1.
\end{cases}
\end{equation}
Then the function $\psi\in\mathcal{B}$ is an interpolation parameter, and the equality of spaces
\begin{equation}\label{8f49}
H^{s-\lambda;\varphi}(W)=
\bigl[H^{s_{0}-\lambda}(W),H^{s_{1}-\lambda}(W)\bigr]_{\psi}
\end{equation}
holds true with equivalence of norms for arbitrary $\lambda\in\mathbb{R}$ provided that $W=G$ or $W=\Gamma$. If $W=\mathbb{R}^{k}$ with $1\leq k\in\mathbb{Z}$, then \eqref{8f49} holds true with equality of norms.
\end{proposition}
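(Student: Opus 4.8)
The plan is to treat the Euclidean case $W=\mathbb{R}^{k}$ first and then transfer to $W=G$ and $W=\Gamma$ by the standard localization/subspace machinery already recorded in the excerpt. First I would verify that the function $\psi$ defined by \eqref{8f16} belongs to $\mathcal{B}$ and is an interpolation parameter. Membership in $\mathcal{B}$ is immediate from property (a) of the class $\mathcal{M}$ (boundedness of $\varphi$ and $1/\varphi$ on compacta) together with the power factor $r^{(s-s_0)/(s_1-s_0)}$, whose exponent lies strictly between $0$ and $1$. To see that $\psi$ is an interpolation parameter, I would invoke the Peetre-type criterion quoted in Section~\ref{16sec6}: a function in $\mathcal{B}$ is an interpolation parameter iff it is pseudoconcave near infinity. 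Writing $\theta:=(s-s_0)/(s_1-s_0)\in(0,1)$, the function $r\mapsto r^{\theta}\varphi(r^{1/(s_1-s_0)})$ is, up to the bounded-ratio equivalence allowed by pseudoconcavity, comparable to a concave function because $\varphi$ is slowly varying (Karamata) and the power factor $r^{\theta}$ with $0<\theta<1$ is concave; the slowly varying perturbation does not destroy pseudoconcavity. This is exactly the kind of computation carried out in \cite[Section~1.1]{MikhailetsMurach14}, so I would cite it rather than redo it.

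Next, for $W=\mathbb{R}^{k}$, I would identify the generating operator of the regular pair $\bigl[H^{s_0-\lambda}(\mathbb{R}^{k}),H^{s_1-\lambda}(\mathbb{R}^{k})\bigr]$. Using the Fourier transform, this pair is unitarily equivalent to a pair of weighted $L_2$ spaces with weights $(1+|\xi|^2)^{(s_0-\lambda)/2}$ and $(1+|\xi|^2)^{(s_1-\lambda)/2}$, and the generating operator $J$ is multiplication by $(1+|\xi|^2)^{(s_1-s_0)/2}$. Then $\psi(J)$ is multiplication by
\[
\psi\bigl((1+|\xi|^2)^{(s_1-s_0)/2}\bigr)
=(1+|\xi|^2)^{(s-s_0)/2}\,\varphi\bigl((1+|\xi|^2)^{1/2}\bigr),
\]
where I have substituted $r=(1+|\xi|^2)^{(s_1-s_0)/2}\geq1$ into \eqref{8f16} and simplified the exponents. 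Multiplying by the fixed weight $(1+|\xi|^2)^{(s_0-\lambda)/2}$ coming from $X_0=H^{s_0-\lambda}(\mathbb{R}^{k})$ produces exactly the weight $(1+|\xi|^2)^{(s-\lambda)/2}\varphi((1+|\xi|^2)^{1/2})$ that defines $H^{s-\lambda;\varphi}(\mathbb{R}^{k})$ via \eqref{8f50}. Hence the norms coincide, which gives \eqref{8f49} with equality of norms in the Euclidean case.

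To pass to $W=G$ (a bounded domain with smooth boundary) and $W=\Gamma$ (a closed smooth manifold), I would use Proposition~\ref{8prop1}. For $W=G$: the space $H^{s-\lambda}(G)$ is by definition a factor space of $H^{s-\lambda}(\mathbb{R}^{n})$, equivalently realizable as a complemented subspace via a bounded linear extension operator $G\to\mathbb{R}^{n}$ that works simultaneously on the whole Sobolev scale (a universal, e.g. Stein-type, extension operator; such an operator also acts continuously on the $H^{s;\varphi}$ scale, as in \cite{MikhailetsMurach14}). This yields a common projector $P$ on $H^{s_0-\lambda}(\mathbb{R}^{n})$ and $H^{s_1-\lambda}(\mathbb{R}^{n})$ whose ranges are the copies of $H^{s_0-\lambda}(G)$ and $H^{s_1-\lambda}(G)$, and Proposition~\ref{8prop1} then gives $[H^{s_0-\lambda}(G),H^{s_1-\lambda}(G)]_{\psi}=[H^{s_0-\lambda}(\mathbb{R}^{n}),H^{s_1-\lambda}(\mathbb{R}^{n})]_{\psi}\cap H^{s-\lambda}(G)$, which by the Euclidean case equals $H^{s-\lambda;\varphi}(\mathbb{R}^{n})\cap H^{s-\lambda}(G)=H^{s-\lambda;\varphi}(G)$, with equivalence of norms. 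For $W=\Gamma$ the argument is analogous but uses the finite atlas $\{\theta_j\}$ and partition of unity $\{\chi_j\}$ to realize $H^{s-\lambda}(\Gamma)$ and $H^{s-\lambda;\varphi}(\Gamma)$ as complemented subspaces of $\bigoplus_{j=1}^{\lambda}H^{s-\lambda}(\mathbb{R}^{n-1})$, then combines Proposition~\ref{8prop2} (interpolation commutes with finite orthogonal sums) with Proposition~\ref{8prop1}; these are precisely the arguments of \cite[Theorems~2.3 and~3.3]{MikhailetsMurach14}, which I would cite. Here only equivalence of norms is claimed, consistent with the statement.

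The main obstacle is not any single step but ensuring the correct bookkeeping of exponents in the substitution $r=(1+|\xi|^2)^{(s_1-s_0)/2}$ and, slightly more delicately, the verification that $\psi$ is genuinely an interpolation parameter — i.e. that the slowly varying factor $\varphi$ does not spoil pseudoconcavity. The cleanest route is to reduce to the known Euclidean identity for the $H^{s;\varphi}(\mathbb{R}^k)$ scale, for which this is already established in \cite{MikhailetsMurach14}, and then let Propositions~\ref{8prop1}--\ref{8prop2} do the transfer to $G$ and $\Gamma$ mechanically.
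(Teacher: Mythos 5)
Your proposal is correct and follows essentially the route the paper relies on: the paper proves Proposition~\ref{8prop4} simply by citing \cite[Theorems 1.14, 2.2, and 3.2]{MikhailetsMurach14}, and those theorems are established exactly as you describe --- the Fourier-side identification of the generating operator $J$ as multiplication by $(1+|\xi|^2)^{(s_1-s_0)/2}$ giving equality of norms on $\mathbb{R}^k$, followed by transfer to $G$ and $\Gamma$ via a common extension/projector and Propositions~\ref{8prop1}--\ref{8prop2}. The only cosmetic slip is that Proposition~\ref{8prop1} yields the intersection with the subspace $Y_0$ (the copy of $H^{s_0-\lambda}(G)$), not with $H^{s-\lambda}(G)$, but the resulting space is the same.
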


The proof of this proposition is given in \cite[Theorems 1.14, 2.2, and 3.2]{MikhailetsMurach14} for the cases where $W=\mathbb{R}^{k}$, $W=\Gamma$, and $W=G$ respectively.

\begin{proposition}\label{8prop5}
Let $s_{0},s,s_{1}\in\mathbb{R}$
satisfy $0\leq s_{0}<s<s_{1}$, and let $\varphi\in\mathcal{M}$. Define the interpolation parameter $\psi\in\mathcal{B}$ by formula \eqref{8f16}. Then the equality of spaces
\begin{equation}\label{8f22}
H^{s-\lambda,(s-\lambda)/(2b);\varphi}(W)=
\bigl[H^{s_{0}-\lambda,(s_{0}-\lambda)/(2b)}(W),
H^{s_{1}-\lambda,(s_{1}-\lambda)/(2b)}(W)\bigr]_{\psi}
\end{equation}
holds true with equivalence of norms for arbitrary real $\lambda\leq s_{0}$ provided that $W=\Omega$ or $W=S$. If $W=\mathbb{R}^{k}$ with $2\leq k\in\mathbb{Z}$, then \eqref{8f22} holds true with equality of norms without the assumption that $0\leq s_{0}$.
\end{proposition}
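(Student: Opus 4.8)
The plan is to reduce everything to the already-known isotropic interpolation formula of Proposition~\ref{8prop4}. The essential observation is that the anisotropic H\"ormander space $H^{s,s\gamma;\varphi}(\mathbb{R}^k)$ is, by definition, $H^{\mu}(\mathbb{R}^k)$ with the weight $\mu$ from \eqref{16f70}, which depends on $\xi=(\xi',\xi_k)$ only through the single quantity $\langle\xi\rangle_\gamma:=(1+|\xi'|^2+|\xi_k|^{2\gamma})^{1/2}$. Thus the whole anisotropic scale $\{H^{s,s\gamma;\varphi}(\mathbb{R}^k):s\in\mathbb{R},\varphi\in\mathcal{M}\}$ is a scale of Hilbert spaces built from a common ``order function'' $\langle\xi\rangle_\gamma$ in exactly the same algebraic way that the isotropic scale is built from $\langle\xi\rangle=(1+|\xi|^2)^{1/2}$. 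First I would make this precise: the generating operator $J$ of the regular pair $[H^{s_0,s_0\gamma}(\mathbb{R}^k),H^{s_1,s_1\gamma}(\mathbb{R}^k)]$ is (up to unitary equivalence via the Fourier transform) multiplication by $\langle\xi\rangle_\gamma^{\,s_1-s_0}$, so that for a Borel function $\psi$ the space $[H_0,H_1]_\psi$ is multiplication by $\psi(\langle\xi\rangle_\gamma^{\,s_1-s_0})$ — and with $\psi$ as in \eqref{8f16} this product equals $\langle\xi\rangle_\gamma^{\,s}\varphi(\langle\xi\rangle_\gamma)$, i.e.\ precisely the weight \eqref{16f70} defining $H^{s,s\gamma;\varphi}(\mathbb{R}^k)$. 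That $\psi$ is an interpolation parameter is already contained in Proposition~\ref{8prop4}. Inserting the shift $\lambda$ is routine since it merely multiplies every weight by $\langle\xi\rangle_\gamma^{-\lambda}$. This gives \eqref{8f22} with equality of norms for $W=\mathbb{R}^k$, with no sign restriction on $s_0$.

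Next I would pass from $\mathbb{R}^k$ to $W=\Omega$ and $W=S$ by the standard ``subspace/retraction'' trick, using Proposition~\ref{8prop1}. For $W=\Omega$ one writes $H^{\mu}(\Omega)$ as the quotient of $H^{\mu}(\mathbb{R}^{n+1})$ by $H^{\mu}_{Q}(\mathbb{R}^{n+1})$ with $Q=\mathbb{R}^{n+1}\setminus\Omega$, or — what is more convenient for interpolation — identifies $H^\mu(\Omega)$ with a complemented subspace of $H^\mu(\mathbb{R}^{n+1})$ via a \emph{common} bounded linear extension operator. Here is where the hypothesis $\lambda\le s_0$ (equivalently $s_j-\lambda\ge 0$, so all orders occurring are nonnegative) is used: $\Omega=G\times(0,\tau)$ is a Lipschitz (indeed smooth in $x$, interval in $t$) cylinder, and Slobodetskii-type extension operators $E:H^{s,s\gamma}(\Omega)\to H^{s,s\gamma}(\mathbb{R}^{n+1})$ bounded \emph{simultaneously} for all orders in a range are available only for nonnegative smoothness (one cannot extend distributions of negative order by a single universal operator). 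Taking $E$ bounded on both $H^{s_j-\lambda,(s_j-\lambda)\gamma}(\Omega)$, $j=0,1$, and $R$ the restriction operator, $P:=E\circ R$ is a projector on each $H^{s_j-\lambda,(s_j-\lambda)\gamma}(\mathbb{R}^{n+1})$ onto the image of $E$, which is isomorphic to $H^{s_j-\lambda,(s_j-\lambda)\gamma}(\Omega)$; Proposition~\ref{8prop1} then yields \eqref{8f22} for $W=\Omega$ from the already-proved case $W=\mathbb{R}^{n+1}$, up to equivalence of norms. The case $W=S$ is reduced to $W=\Pi=\mathbb{R}^{n-1}\times(0,\tau)$ by freezing the finite atlas $\{\theta_j\}$ and partition of unity $\{\chi_j\}$ used in the very definition of $H^{s,s\gamma;\varphi}(S)$: the maps $v\mapsto(v_1,\dots,v_\lambda)$ and its left inverse $\sum_j(\cdot)_j$ are bounded between the corresponding Sobolev spaces for all nonnegative orders (this is exactly \cite[Theorem~1]{Los16JMathSci}, cited in Section~\ref{16sec3}), so $H^{s,s\gamma;\varphi}(S)$ is a common complemented subspace of $\bigoplus_{j=1}^{\lambda}H^{s,s\gamma;\varphi}(\Pi)$; combining Proposition~\ref{8prop2} (to interpolate the finite orthogonal sum) with Proposition~\ref{8prop1} (for the subspace) and the already-established formula on $\Pi$ finishes the argument. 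One checks that $\Pi$ is covered by the $\mathbb{R}^k$ case just as $\Omega$ is, via a universal extension operator in the variable $t\in(0,\tau)$ — again available only for nonnegative smoothness, which is why $0\le s_0$ is assumed when $W=S$ or $\Omega$.

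The main obstacle is this nonnegativity restriction and, more precisely, producing the \emph{single} (order-independent) bounded extension/retraction operators on the cylinder $\Omega$ and the strip $\Pi$ that are needed to invoke Proposition~\ref{8prop1} on the whole pair at once. For a fixed order such operators are classical, but one must verify that one and the same operator works across the range $[s_0-\lambda,s_1-\lambda]$ (or at least at the two endpoints and hence, by interpolation itself, in between); this is where the smoothness of $G$ and the product structure of $\Omega$ and $S$ enter essentially, and where the proof genuinely uses more than formal Fourier-analytic bookkeeping. Once the operators are in hand, the interpolation steps are mechanical applications of Propositions \ref{8prop1}--\ref{8prop3} together with the $\mathbb{R}^k$ identity, and the equivalence (rather than equality) of norms for $W=\Omega,S$ is simply inherited from the norm distortion introduced by $E$ and $R$.
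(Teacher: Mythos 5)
Your proposal is correct and takes essentially the same route as the paper, which for this proposition simply cites \cite[Theorem~2 and Lemmas~1--2]{Los16JMathSci}: the $\mathbb{R}^{k}$ case by computing $\psi(J)$ on the Fourier side for the multiplication operator $J$ associated with the weight $(1+|\xi'|^{2}+|\xi_{k}|^{2\gamma})^{(s_1-s_0)/2}$, and the $\Omega$ and $S$ cases by retraction/coretraction through a common extension operator together with the flattening and sewing maps (the same operators $L$, $K$, $K_{1}$ that reappear in the proof of Lemma~\ref{8lem1}). Your diagnosis of where the restrictions $0\le s_{0}$ and $\lambda\le s_{0}$ enter --- the need for a single extension operator of nonnegative anisotropic order on the cylinder and the strip, bounded at both endpoint orders --- matches the cited arguments.
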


This result is proved in \cite[Theorem~2 and Lemma~1]{Los16JMathSci} for
the cases where $W=S$ and  $W=\mathbb{R}^{k}$ respectively. In the $W=\Omega$ case, the proof of the result is the same as the proof of  its analog for a strip \cite[Lemma~2]{Los16JMathSci}. Note that we represent the indexes as $s-\lambda$ etc. for the sake of convenience of our application of Propositions \ref{8prop4} and \ref{8prop5} to the spaces used in \eqref{16rangeH}.

\section{Proofs}\label{16sec7}

To deduce Theorem \ref{16th4.1} from its known counterpart in the Sobolev case, we need to prove a version of Proposition \ref{8prop5} for the range of isomorphism \eqref{16f11}. This proof is based on the following lemma about properties of the operator that assigns the Cauchy data to an arbitrary function $g\in H^{s,s/(2b);\varphi}(S)$.

\begin{lemma}\label{8lem1}
Choose an integer $r\geq1$, and consider the linear mapping
\begin{equation}\label{8f25}
R:g\mapsto\bigl(g\!\upharpoonright\!\Gamma,
\partial_{t}g\!\upharpoonright\!\Gamma,\dots,
\partial^{r-1}_{t}g\!\upharpoonright\!\Gamma\bigr),
\quad\mbox{with}\quad g\in C^{\infty}(\overline{S}).
\end{equation}
This mapping extends uniquely (by continuity) to a bounded linear operator
\begin{equation}\label{8f65}
R:H^{s,s/(2b);\varphi}(S)\rightarrow \bigoplus_{k=0}^{r-1}
H^{s-2bk-b;\varphi}(\Gamma)=:\mathbb{H}^{s;\varphi}(\Gamma)
\end{equation}
for arbitrary $s>2br-b$ and $\varphi\in\mathcal{M}$. This operator is right invertible; moreover, there exists a continuous linear mapping  $T:(L_2(\Gamma))^r\to L_2(S)$ that the restriction of $T$ to the space $\mathbb{H}^{s;\varphi}(\Gamma)$
is a bounded linear operator
\begin{equation}\label{8f43}
T:\mathbb{H}^{s;\varphi}(\Gamma)\to H^{s,s/(2b);\varphi}(S)
\end{equation}
for all $s>2br-b$ and  $\varphi\in\mathcal{M}$ and that $RTv=v$ for every $v\in\mathbb{H}^{s;\varphi}(\Gamma)$.
\end{lemma}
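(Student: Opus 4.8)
The plan is to reduce everything to the flat model case $S=\Pi=\mathbb{R}^{n-1}\times(0,\tau)$ with the help of the special local charts \eqref{8f-local} and the partition of unity $\{\chi_j\}$ on $\Gamma$, and there to construct $T$ explicitly by an elementary ``extension in $t$'' formula. First I would observe that, by the very definition of $H^{s,s/(2b);\varphi}(S)$ and $H^{s;\varphi}(\Gamma)$ and the independence of these norms of the atlas, it suffices to prove the analogue of the lemma with $S$ replaced by $\Pi$ and $\Gamma$ replaced by $\mathbb{R}^{n-1}$, i.e. to treat the trace operator
$$
R:g\mapsto\bigl(g\!\upharpoonright\!(t=0),\dots,\partial_t^{r-1}g\!\upharpoonright\!(t=0)\bigr)
$$
acting from $H^{s,s/(2b);\varphi}(\mathbb{R}^{n-1}\times(0,\tau))$ to $\bigoplus_{k=0}^{r-1}H^{s-2bk-b;\varphi}(\mathbb{R}^{n-1})$, together with a continuous right inverse $T$ that does not depend on $s,\varphi$ as a map $(L_2(\mathbb{R}^{n-1}))^r\to L_2(\Pi)$ and that is bounded on the scale. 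The boundedness of $R$ on the $\Pi$-level in the Sobolev case $\varphi\equiv 1$, $s=s_0$ and $s=s_1$ (with $2br-b<s_0<s_1$) is the classical anisotropic trace theorem (e.g. \cite[Chapter~II, Theorem~7]{Slobodetskii58}); interpolating these two bounded operators with the function parameter $\psi$ of \eqref{8f16} and invoking Proposition~\ref{8prop5} (for $W=\mathbb{R}^{k}$, with $k=n$ on the source side and the isotropic Proposition~\ref{8prop4} for $W=\mathbb{R}^{n-1}$ on the target side) yields the bounded operator \eqref{8f65} at once. Passing back from $\Pi$ to $S$ via $v\mapsto(\chi_j v\circ\theta_j^*)_j$ and summing over $j$ then gives \eqref{8f65}.

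Second, for the right inverse I would write down an explicit operator of Hestenes/Seeley type in the time variable. Fix a $C^\infty$ cut-off $\eta$ on $\mathbb{R}$ with $\eta\equiv1$ near $0$ and $\mathrm{supp}\,\eta\subset(-\tau,\tau)$, and for data $(v_0,\dots,v_{r-1})$ on $\mathbb{R}^{n-1}$ set, say via a partial Fourier transform in $x'$,
$$
(Tv)(x',t):=\mathcal{F}^{-1}_{\xi'\to x'}\Biggl(\eta(t\langle\xi'\rangle^{2b})\sum_{k=0}^{r-1}\frac{t^{k}}{k!}\,\langle\xi'\rangle^{-2bk}\widehat{v_k}(\xi')\Biggr),
$$
with $\langle\xi'\rangle:=(1+|\xi'|^2)^{1/2}$; this is a fixed continuous map $(L_2(\mathbb{R}^{n-1}))^r\to L_2(\Pi)$, and a direct computation of $\partial_t^j (Tv)(x',0)$ shows $RTv=v$. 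One then checks, using the multiplier $\mu(\xi',\xi_k)=\langle(\xi',\xi_k^{1/b})\rangle^{s}\varphi(\langle(\xi',\xi_k^{1/b})\rangle)$ in the anisotropic norm on the source and the isotropic $\langle\xi'\rangle^{s-2bk-b}\varphi(\langle\xi'\rangle)$ on the target, that $T$ is bounded as in \eqref{8f43} — the self-similar scaling $t\mapsto t\langle\xi'\rangle^{2b}$ together with slow variation of $\varphi$ makes each summand's norm comparable to $\|v_k\|_{H^{s-2bk-b;\varphi}(\mathbb{R}^{n-1})}$; alternatively one can establish \eqref{8f43} in the Sobolev cases $s=s_0,s_1$ and interpolate, exactly as for $R$. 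Undoing the localization, $T$ descends to $S$ and $\Gamma$ because multiplication by $\chi_j$ and the diffeomorphisms $\theta_j^*$ act continuously on all the spaces involved, and the relation $RTv=v$ is preserved since $\theta_j^*$ respects the foliation $t=\mathrm{const}$; a routine patching argument (multiply $T$ by a further cut-off subordinate to $\{\Gamma_j\}$ and sum, then correct by a Neumann-series argument exactly as in \cite[Lemma~3.1]{LosMikhailetsMurach17CPAA}) produces a genuine right inverse on the manifold.

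The main obstacle I expect is not the boundedness — that is the interpolation argument above — but the \emph{$s$- and $\varphi$-independence of $T$ as a map into $L_2(S)$} together with the identity $RTv=v$ holding simultaneously on the whole scale. The point is that one cannot build $T$ separately on each pair of Sobolev spaces and interpolate, because the interpolated right inverses need not coincide with a single $L_2$-operator; hence the explicit formula (or an equivalent one) is essential, and the care lies in verifying that the very same $T$ lands in every $H^{s,s/(2b);\varphi}(S)$, which is why the cut-off $\eta$ must be chosen once and for all and the estimates must be uniform in the scale. A secondary technical nuisance is the transition from $\Pi$ back to $S$: the charts $\theta_j^*$ from \eqref{8f-local} are product maps in the $t$-direction, so they do preserve traces at $t=0$, but one must make sure the partition of unity $\chi_j$ is handled so that $\sum_j$ reconstructs $v$ and not merely something equivalent to it; this is dealt with by the standard device of composing with a dual partition of unity and a finite Neumann correction, as already invoked in the excerpt for the well-definedness of $H^{s,s\gamma;\varphi}(S)$.
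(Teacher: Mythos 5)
Your overall route is the same as the paper's: boundedness of $R$ by interpolation with the parameter $\psi$ of \eqref{8f16} from the Sobolev trace theorem, an explicit H\"ormander-type extension operator in the time variable with the anisotropic scaling $t\langle\xi'\rangle^{2b}$, and a reduction to the flat case via the product charts \eqref{8f-local}. However, there is a concrete error in your formula for $T$: the extra factor $\langle\xi'\rangle^{-2bk}$ destroys the identity $RTv=v$. Since $\eta\equiv1$ near $t=0$, a direct computation gives
\begin{equation*}
\mathcal{F}_{x'\to\xi'}\bigl[\partial_t^{\,j}(Tv)\big|_{t=0}\bigr](\xi')
=\partial_t^{\,j}\Bigl(\sum_{k=0}^{r-1}\frac{t^{k}}{k!}\,
\langle\xi'\rangle^{-2bk}\,\widehat{v_k}(\xi')\Bigr)\Big|_{t=0}
=\langle\xi'\rangle^{-2bj}\,\widehat{v_j}(\xi'),
\end{equation*}
so your $T$ reproduces $\langle D'\rangle^{-2bj}v_j$ rather than $v_j$, and the claimed "direct computation showing $RTv=v$" fails for every $r\geq2$. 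The repair is simply to delete that factor; the resulting formula is exactly the one used in the paper (following H\"ormander), and then both the trace identity and the boundedness estimates (which the paper carries out in the integer Sobolev cases $H^{2bm,m}$ and then interpolates, one of the two options you mention) go through.

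Two smaller points. First, the paper works on all of $\mathbb{R}^{n}$ and only afterwards restricts to the strip $\Pi$; your plan to work on $\Pi$ directly is fine but you must then say why the trace $R_1u:=R_0w$ is independent of the chosen extension $w$. Second, the final patching to $S$ does not require any Neumann-series correction: the paper composes the flattening operator $L$ (multiplication by $\chi_j$ and pull-back) with a sewing operator $K$ built from cut-offs $\eta_j$ equal to $1$ on $\theta_j^{-1}(\mathrm{supp}\,\chi_j)$, for which $KL=\mathrm{id}$ holds exactly; this gives $RTv=KLv=v$ on the manifold with no further correction, and it is worth using this cleaner device since a Neumann-series fix would obscure the $s$- and $\varphi$-independence of $T$ that you rightly identify as the delicate point.
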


\begin{proof}
We first prove an analog of this lemma for H\"ormander spaces
defined on $\mathbb{R}^{n}$ and $\mathbb{R}^{n-1}$ instead of $S$ and $\Gamma$. Then we deduce the lemma with the help of the special local charts on~$S$.

Consider the linear mapping
\begin{equation}\label{8f60}
R_{0}:w\mapsto\bigl(w\!\mid_{t=0},\,
\partial_{t}w\!\mid_{t=0},\dots,
\partial^{r-1}_{t}w\!\mid_{t=0}\bigr),\quad\mbox{with}
\quad w\in \mathcal{S}(\mathbb{R}^{n}).
\end{equation}
Here, we interpret $w$ as a function $w(x,t)$ of $x\in\mathbb{R}^{n-1}$ and $t\in\mathbb{R}$ so that $R_{0}w\in(\mathcal{S}(\mathbb{R}^{n-1}))^{r}$.
Choose $s>2br-b$ and $\varphi\in\mathcal{M}$ arbitrarily, and prove that  the mapping \eqref{8f60} extends uniquely (by continuity) to a bounded linear operator
\begin{equation}\label{8f63}
R_{0}:H^{s,s/(2b);\varphi}(\mathbb{R}^{n})\rightarrow \bigoplus_{k=0}^{r-1}
H^{s-2bk-b;\varphi}(\mathbb{R}^{n-1})=:
\mathbb{H}^{s;\varphi}(\mathbb{R}^{n-1}).
\end{equation}
This fact is known in the Sobolev case of $\varphi\equiv1$ due to \cite[Chapter~II, Theorem 7]{Slobodetskii58}. Using the interpolation with a function parameter between Sobolev spaces, we can deduce this fact in the general case of arbitrary $\varphi\in\mathcal{M}$.

Namely, choose $s_0,s_1\in\mathbb{R}$ such that $2br-b<s_0<s<s_1$, and
consider the bounded linear operators
\begin{equation}\label{8f63-sob}
R_{0}:H^{s_j,s_j/(2b)}(\mathbb{R}^{n})\to\mathbb{H}^{s_j}(\mathbb{R}^{n-1})
\quad\mbox{for each}\quad j\in\{0,1\}.
\end{equation}
Let $\psi$ be the interpolation parameter \eqref{8f16}. Then the restriction of the mapping \eqref{8f63-sob} with $j=0$ to the space
\begin{equation}\label{8f-intH}
\bigl[H^{s_0,s_{0}/(2b)}(\mathbb{R}^{n}),
H^{s_1,s_{1}/(2b)}(\mathbb{R}^{n})\bigr]_{\psi}=
H^{s,s/(2b);\varphi}(\mathbb{R}^{n})
\end{equation}
is a bounded operator
\begin{equation}\label{8f27}
R_{0}:H^{s,s/(2b);\varphi}(\mathbb{R}^{n})\to
\bigl[\mathbb{H}^{s_0}(\mathbb{R}^{n-1}),
\mathbb{H}^{s_1}(\mathbb{R}^{n-1})\bigr]_{\psi}.
\end{equation}
The latter equality is due to Proposition~\ref{8prop5}. This operator is an extension by continuity of the mapping \eqref{8f60} because the set
$\mathcal{S}(\mathbb{R}^{n})$ is dense in $H^{s,s/(2b);\varphi}(\mathbb{R}^{n})$. Owing to Propositions \ref{8prop2} and \ref{8prop4}, we get
\begin{equation}\label{8f-intHH}
\begin{aligned}
\bigl[\mathbb{H}^{s_0}(\mathbb{R}^{n-1}),
\mathbb{H}^{s_1}(\mathbb{R}^{n-1})\bigr]_{\psi}&=
\bigoplus_{k=0}^{r-1}
\bigl[H^{s_0-2bk-b}(\mathbb{R}^{n-1}),
H^{s_1-2bk-b}(\mathbb{R}^{n-1})\bigr]_{\psi}\\
&=\bigoplus_{k=0}^{r-1}H^{s-2bk-b;\varphi}(\mathbb{R}^{n-1})=
\mathbb{H}^{s;\varphi}(\mathbb{R}^{n-1}).
\end{aligned}
\end{equation}
Hence, the bounded linear operator \eqref{8f27} is the required operator \eqref{8f63}.

Let us now build a continuous linear mapping
\begin{equation}\label{8f58}
T_0:\bigl(L_{2}(\mathbb{R}^{n-1})\bigr)^r\to L_{2}(\mathbb{R}^{n})
\end{equation}
such that its restriction to every space $\mathbb{H}^{s;\varphi}(\mathbb{R}^{n-1})$, with $s>2br-b$ and $\varphi\in\mathcal{M}$, is a bounded operator between the spaces $\mathbb{H}^{s;\varphi}(\mathbb{R}^{n-1})$ and $H^{s,s/(2b);\varphi}(\mathbb{R}^{n})$ and that this operator is a right inverse of \eqref{8f63}.

Similarly to \cite[Proof of Theorem~2.5.7]{Hermander63}, we define the linear mapping
\begin{equation}\label{8f58-def}
T_0:v\mapsto F_{\xi\mapsto x}^{-1}\biggl[
\beta\bigl(\langle\xi\rangle^{2b}t\bigr)\,\sum_{k=0}^{r-1}
\frac{1}{k!}\,\widehat{v_k}(\xi)\times t^k\biggr](x,t)
\end{equation}
on the linear topological space of all vectors
$$
v:=(v_0,\dots,v_{r-1})\in\bigl(\mathcal{S}'(\mathbb{R}^{n-1})\bigr)^r.
$$
We consider $T_{0}v$ as a distribution on the Euclidean space $\mathbb{R}^{n}$ of points $(x,t)$, with $x=(x_{1},\ldots,x_{n-1})\in\mathbb{R}^{n-1}$ and $t\in\mathbb{R}$. In \eqref{8f58-def}, the function $\beta\in C^{\infty}_{0}(\mathbb{R})$ is chosen so that $\beta=1$ in a certain neighbourhood of zero. As usual, $F_{\xi\mapsto x}^{-1}$ denotes the inverse Fourier transform with respect to $\xi=(\xi_{1},\ldots,\xi_{n-1})\in\mathbb{R}^{n-1}$, and $\langle\xi\rangle:=(1+|\xi|^2)^{1/2}$. The variable $\xi$ is dual to $x$ relative to the direct Fourier transform $\widehat{w}(\xi)=(Fw)(\xi)$ of a function $w(x)$.

Obviously, the mapping \eqref{8f58-def} is well defined and acts continuously between $(\mathcal{S}'(\mathbb{R}^{n-1}))^r$ and $\mathcal{S}'(\mathbb{R}^{n})$. It is also evident that the restriction of this mapping to the space $(L_{2}(\mathbb{R}^{n-1}))^r$ is a continuous operator from $(L_{2}(\mathbb{R}^{n-1}))^r$ to $L_{2}(\mathbb{R}^{n})$.

We assert that
\begin{equation}\label{8f57}
R_{0}T_{0}v=v \quad\mbox{for every}\quad
v\in\bigl(\mathcal{S}(\mathbb{R}^{n-1})\bigr)^r.
\end{equation}
Since $v\in(\mathcal{S}(\mathbb{R}^{n-1}))^r$ implies $T_{0}v\in\mathcal{S}(\mathbb{R}^{n-1})$, the left-hand side of the equality \eqref{8f57} is well defined. Let us prove this equality.

Choosing $j\in\{0,\dots,r-1\}$ and
$v=(v_0,\dots,v_{r-1})\in(\mathcal{S}(\mathbb{R}^{n-1}))^r$
arbitrarily, we get
\begin{align*}
F\bigl[\partial^j_tT_0v\!\mid_{t=0}\bigr](\xi)&=
\partial^j_{t}F_{x\mapsto\xi}[T_0v](\xi,t)\big|_{t=0}=
\partial^j_t\biggl(
\beta\bigl(\langle\xi\rangle^{2b}t\bigr)\,\sum_{k=0}^{r-1}
\frac{1}{k!}\,\widehat{v_k}(\xi)\,t^k\biggr)\bigg|_{t=0}\\
&=\biggl(\partial^j_t\sum_{k=0}^{r-1}
\frac{1}{k!}\,\widehat{v_k}(\xi)\,t^k\biggr)\!
\bigg|_{t=0}=
\widehat{v_j}(\xi)
\end{align*}
for every $\xi\in\mathbb{R}^{n-1}$. In the third equality, we have used the fact that $\beta=1$ in a neighbourhood of zero.
Thus, the Fourier transforms of all the corresponding components of the vectors $R_{0}T_{0}v$ and $v$ coincide, which is equivalent to \eqref{8f57}.

Let us now prove that the restriction of the mapping \eqref{8f58-def} to each space
\begin{equation}\label{8f-doubleSobolev}
\mathbb{H}^{2bm}(\mathbb{R}^{n-1})=
\bigoplus_{k=0}^{r-1}H^{2bm-2bk-b}(\mathbb{R}^{n-1}),
\end{equation}
with $0\leq m\in\mathbb{Z}$, is a bounded operator between $\mathbb{H}^{2bm}(\mathbb{R}^{n-1})$ and $H^{2bm,m}(\mathbb{R}^{n})$.
Note that the integers $2bm-2bk-b$ may be negative in \eqref{8f-doubleSobolev}.

Let an integer $m\geq0$. We use the fact that the norm in the space $H^{2bm,m}(\mathbb{R}^{n})$ is equivalent to the norm
\begin{equation*}
\|w\|_{2bm,m}:=\|w\|+
\sum_{j=1}^{n-1}\|\partial_{x_j}^{2bm}w\|+
\|\partial_{t}^{m}w\|
\end{equation*}
(see, e.g., \cite[Section~9.1]{BesovIlinNikolskii75}). Here and below in this proof, $\|\cdot\|$ stands for the norm in the Hilbert space $L_2(\mathbb{R}^{n})$. Of course, $\partial_{x_j}$ and $\partial_{t}$ denote the operators of generalized differentiation with respect to $x_j$ and $t$ respectively. Choosing $v=(v_0,\dots,v_{r-1})\in(\mathcal{S}(\mathbb{R}^{n-1}))^r$ arbitrarily and using the Parseval equality, we obtain the following:
\begin{align*}
\|T_0v\|_{2bm,m}&=\|T_0v\|+
\sum_{j=1}^{n-1}\|\partial_{x_j}^{2bm}\,T_0v\|+
\|\partial_{t}^{m}\,T_0v\|\\
&=\|\widehat{T_0v}\|+
\sum_{j=1}^{n-1}\|\xi_j^{2bm}\,\widehat{T_0v}\|+
\|\partial_{t}^{m}\,\widehat{T_0v}\|\\
&\leq\sum_{k=0}^{r-1}\frac{1}{k!}\,
\biggl(\,\int\limits_{\mathbb{R}^{n}}
\bigl|\beta(\langle\xi\rangle^{2b}t)\,\widehat{v_k}(\xi)\,t^k\bigr|^2
d\xi dt\biggr)^{1/2}\\
&+\sum_{j=1}^{n-1}\sum_{k=0}^{r-1}\frac{1}{k!}\,
\biggl(\,\int\limits_{\mathbb{R}^{n}}
\bigl|\xi_j^{2bm}\,\beta(\langle\xi\rangle^{2b}t)\,\widehat{v_k}(\xi)\,
t^k\bigr|^2d\xi dt\biggr)^{1/2}\\
&+\sum_{k=0}^{r-1}\frac{1}{k!}\,
\biggl(\,\int\limits_{\mathbb{R}^{n}}
\bigl|\partial_{t}^{m}\bigl(\beta(\langle\xi\rangle^{2b}t)\,t^k\bigr)\,
\widehat{v_k}(\xi)\bigr|^2d\xi dt\biggr)^{1/2}.
\end{align*}

Let us estimate each of these three integrals separately. We begin with the third integral. Changing the variable $\tau=\langle\xi\rangle^{2b}t$ in the interior integral with respect to $t$, we get the equalities
\begin{align*}
\int\limits_{\mathbb{R}^{n}}
\bigl|\partial_{t}^{m}\bigl(\beta(\langle\xi\rangle^{2b}t)\,t^k\bigr)\,
\widehat{v_k}(\xi)\bigr|^2d\xi dt
&=\int\limits_{\mathbb{R}^{n-1}}
|\widehat{v_k}(\xi)|^2d\xi \int\limits_{\mathbb{R}}
|\partial_{t}^{m}(\beta(\langle\xi\rangle^{2b}t)t^k)|^2 dt\\
&=\int\limits_{\mathbb{R}^{n-1}}
\langle\xi\rangle^{4bm-4bk-2b}\,|\widehat{v_k}(\xi)|^2 d\xi \int\limits_{\mathbb{R}}
|\partial_{\tau}^{m}(\beta(\tau)\tau^{k})|^2 d\tau.
\end{align*}
Hence,
$$
\int\limits_{\mathbb{R}^{n}}
\bigl|\partial_{t}^{m}\bigl(\beta(\langle\xi\rangle^{2b}t)\,t^k\bigr)\,
\widehat{v_k}(\xi)\bigr|^2d\xi dt=
c_1\,\|v_k\|^2_{H^{2bm-2bk-b}(\mathbb{R}^{n-1})},
$$
with
$$
c_1:=\int\limits_{\mathbb{R}}
|\partial_{\tau}^{m}(\beta(\tau)\tau^{k})|^2 d\tau<\infty.
$$

Using the same changing of $t$ in the second integral, we obtain the following:
\begin{align*}
\int\limits_{\mathbb{R}^{n}}
\bigl|\xi_j^{2bm}\,\beta(\langle\xi\rangle^{2b}t)\,\widehat{v_k}(\xi)\,
t^k\bigr|^2d\xi dt
&=\int\limits_{\mathbb{R}^{n-1}}
|\xi_j|^{4bm}|\widehat{v_k}(\xi)|^2d\xi \int\limits_{\mathbb{R}}
|t^{k}\,\beta(\langle\xi\rangle^{2b}t)|^2dt\\
&=\int\limits_{\mathbb{R}^{n-1}}
|\xi_j|^{4bm}\langle\xi\rangle^{-4bk-2b}\,|\widehat{v_k}(\xi)|^2d\xi \int\limits_{\mathbb{R}}|\tau^{k}\beta(\tau)|^2d\tau\\
&\leq\int\limits_{\mathbb{R}^{n-1}}
\langle\xi\rangle^{4bm-4bk-2b}\,|\widehat{v_k}(\xi)|^2d\xi
\int\limits_{\mathbb{R}}|\tau^{k}\beta(\tau)|^2d\tau.
\end{align*}
Hence,
$$
\int\limits_{\mathbb{R}^{n}}
\bigl|\xi_j^{2bm}\,\beta(\langle\xi\rangle^{2b}t)\,\widehat{v_k}(\xi)\,
t^k\bigr|^2d\xi dt
\leq c_2\,\|v_k\|^2_{H^{2bm-2bk-b}(\mathbb{R}^{n-1})},
$$
with
$$
c_2:=\int\limits_{\mathbb{R}}|\tau^{k}\beta(\tau)|^2d\tau<\infty.
$$

The first integral is estimated analogously:
\begin{align*}
\int\limits_{\mathbb{R}^{n}}
\bigl|\beta(\langle\xi\rangle^{2b}t)\,\widehat{v_k}(\xi)\,
t^k\bigr|^2d\xi dt
&=\int\limits_{\mathbb{R}^{n-1}}
\langle\xi\rangle^{-4bk-2b}\,|\widehat{v_k}(\xi)|^2d\xi
\int\limits_{\mathbb{R}}|\tau^{k}\beta(\tau)|^2d\tau\\
&=c_2\,\|v_k\|^2_{H^{-2bk-b}(\mathbb{R}^{n-1})}
\leq c_2\,\|v_k\|^2_{H^{2bm-2bk-b}(\mathbb{R}^{n-1})}.
\end{align*}

Thus, we conclude that
\begin{equation*}
\|T_0v\|_{H^{2bm,m}(\mathbb{R}^{n})}^{2}\leq c\,\sum_{k=0}^{r-1}
\|v_k\|^2_{H^{2bm-2bk-b}(\mathbb{R}^{n-1})}=
c\,\|v\|_{\mathbb{H}^{2bm}(\mathbb{R}^{n-1})}^{2}
\end{equation*}
for any $v\in(\mathcal{S}(\mathbb{R}^{n-1}))^r$, with the number $c>0$ being independent of $v$. Since the set $\bigl(S(\mathbb{R}^{n-1})\bigr)^r$ is dense in $\mathbb{H}^{2bm}(\mathbb{R}^{n-1})$, it follows from the latter estimate that the mapping \eqref{8f58-def} sets a bounded linear operator
\begin{equation*}
T_0:\mathbb{H}^{2bm}(\mathbb{R}^{n-1})\to H^{2bm,m}(\mathbb{R}^{n})
\quad\mbox{whenever}\quad 0\leq m\in\mathbb{Z}.
\end{equation*}

Let us deduce from this fact that the mapping \eqref{8f58-def} acts continuously between the spaces $\mathbb{H}^{s;\varphi}(\mathbb{R}^{n-1})$ and $H^{s,s/(2b);\varphi}(\mathbb{R}^{n})$ for every $s>2br-b$ and $\varphi\in\mathcal{M}$. Put $s_0=0$, and choose an integer $s_1>s$ such that $s_1/(2b)\in\mathbb{Z}$,
and consider the linear bounded operators
\begin{equation}\label{8f66}
T_{0}:\mathbb{H}^{s_j}(\mathbb{R}^{n-1})\to H^{s_j,s_j/(2b)}(\mathbb{R}^{n}),
\quad\mbox{with}\quad j\in\{0,1\}.
\end{equation}
Let, as above, $\psi$ be the interpolation parameter \eqref{8f16}. Then the restriction of the mapping \eqref{8f66} with $j=0$ to the space
\begin{equation*}
\bigl[\mathbb{H}^{s_0}(\mathbb{R}^{n-1}),
\mathbb{H}^{s_1}(\mathbb{R}^{n-1})\bigr]_{\psi}=
\mathbb{H}^{s;\varphi}(\mathbb{R}^{n-1})
\end{equation*}
is a bounded operator
\begin{equation}\label{8f48}
T_0:\mathbb{H}^{s;\varphi}(\mathbb{R}^{n-1})\to
H^{s,s/(2b);\varphi}(\mathbb{R}^{n}).
\end{equation}
Here, we have used formulas \eqref{8f-intH} and \eqref{8f-intHH}, which remain true for the considered $s_{0}$ and $s_{1}$.

Now the equality \eqref{8f57} extends by continuity over all vectors
$v\in\mathbb{H}^{s;\varphi}(\mathbb{R}^{n-1})$. Hence, the operator \eqref{8f48} is right inverse to \eqref{8f63}. Thus, the required mapping \eqref{8f58} is built.

We need to introduce analogs of the operators \eqref{8f63} and \eqref{8f48} for the strip
$$
\Pi=\bigl\{(x,t):x\in\mathbb{R}^{n-1},0<t<\tau\bigr\}.
$$
Let $s>2br-b$ and $\varphi\in\mathcal{M}$. Given $u\in H^{s,s/(2b);\varphi}(\Pi)$, we put
$R_{1}u:=R_{0}w$, where a function
$w\in H^{s,s/(2b);\varphi}(\mathbb{R}^{n})$ satisfies the condition
$w\!\upharpoonright\!\Pi=u$. Evidently, this definition does not depend on the choice of $w$. The linear mapping $u\mapsto R_{1}u$ is a bounded  operator
\begin{equation}\label{8f67}
R_{1}:H^{s,s/(2b);\varphi}(\Pi)\to\mathbb{H}^{s;\varphi}(\mathbb{R}^{n-1}).
\end{equation}
This follows immediately from the boundedness of the operator \eqref{8f63} and from the definition of $H^{s,s/(2b);\varphi}(\Pi)$.

Let us build a right inverse of \eqref{8f67} on the base of the mapping \eqref{8f58-def}. We put $T_{1}v:=(T_0v)\!\upharpoonright\!\Pi$ for arbitrary $v\in(L_{2}(\mathbb{R}^{n-1}))^{r}$. Owing to \eqref{8f58},
the linear mapping $v\mapsto T_{1}v$ acts continuously between $(L_{2}(\mathbb{R}^{n-1}))^{r}$ and $L_{2}(\Pi)$. Moreover, its restriction to $\mathbb{H}^{s;\varphi}(\mathbb{R}^{n-1})$ is a bounded operator
\begin{equation}\label{8f51}
T_1:\mathbb{H}^{s;\varphi}(\mathbb{R}^{n-1})\to H^{s,s/(2b);\varphi}(\Pi).
\end{equation}
This follows directly from the boundedness of the operator \eqref{8f48}. Besides,
$$
R_1T_1v=R_1\bigl((T_0v)\!\upharpoonright\!\Pi\bigr)=R_0T_0v=v
\quad\mbox{for every}\quad v\in\mathbb{H}^{s;\varphi}(\mathbb{R}^{n-1}).
$$
Thus, the operator \eqref{8f51} is a right inverse of \eqref{8f67}.

Using operators \eqref{8f67} and \eqref{8f51}, we can now prove our lemma with the help of the special local charts \eqref{8f-local} on $S$. As above, let $s>2br-b$ and $\varphi\in\mathcal{M}$. Choosing $g\in C^{\infty}(\overline{S})$ arbitrarily, we get the following:
\begin{align*}
\|Rg\|_{\mathbb{H}^{s;\varphi}(\Gamma)}^{2}
&=\sum_{k=0}^{r-1}\|\partial^{k}_{t}g\!\upharpoonright\!\Gamma\|_
{H^{s-2bk-b;\varphi}(\Gamma)}^{2}\\
&=\sum_{k=0}^{r-1}\sum_{j=1}^{\lambda}
\|(\chi_{j}(\partial^{k}_{t}g\!\upharpoonright\!\Gamma))
\circ\theta_{j}\|_{H^{s-2bk-b;\varphi}(\mathbb{R}^{n-1})}^{2}\\
&=\sum_{j=1}^{\lambda}\sum_{k=0}^{r-1}
\|\partial^{k}_{t}((\chi_{j}\,g)\circ\theta^{\ast}_{j})
\!\upharpoonright\!\mathbb{R}^{n-1})\|_
{H^{s-2bk-b;\varphi}(\mathbb{R}^{n-1})}^{2}\\
&\leq c^{2}\,\sum_{j=1}^{\lambda}
\|(\chi_{j}\,g)\circ\theta^{\ast}_{j}\|_{H^{s,s/(2b);\varphi}(\Pi)}^{2}=
c^{2}\,\|g\|_{H^{s,s/(2b);\varphi}(S)}^{2}.
\end{align*}
Here, $c$ denotes the norm of the bounded operator \eqref{8f67}, and, as usual, the symbol "$\circ$" stands for a composition of functions. Recall that $\{\theta_{j}\}$ is a collection of local charts on $\Gamma$ and that $\{\chi_{j}\}$ is an infinitely smooth partition of unity on $\Gamma$. Thus,
\begin{equation*}
\|Rg\|_{\mathbb{H}^{s;\varphi}(\Gamma)}\leq c\,\|g\|_{H^{s,s/(2b);\varphi}(S)}
\quad\mbox{for every}\quad g\in C^{\infty}(\overline{S}).
\end{equation*}
This implies that the mapping \eqref{8f25} extends by continuity to the bounded linear operator \eqref{8f65}.

Let us now build a continuous linear mapping $T:(L_2(\Gamma))^r\to L_2(S)$ whose restriction to $\mathbb{H}^{s;\varphi}(\Gamma)$ is a right inverse of \eqref{8f65}. Consider the linear mapping of flattening of $\Gamma$
\begin{equation*}
L:\omega\mapsto\bigl((\chi_{1}\omega)\circ\theta_{1},\ldots,
(\chi_{\lambda}\omega)\circ\theta_{\lambda}\bigr),
\quad\mbox{with}\quad\omega\in L_2(\Gamma).
\end{equation*}
This mapping acts continuously between $L_{2}(\Gamma)$ and $(L_{2}(\mathbb{R}^{n-1}))^{\lambda}$. Moreover, its restriction to $H^{\sigma;\varphi}(\Gamma)$ is an isometric operator
\begin{equation}\label{8f52}
L:H^{\sigma;\varphi}(\Gamma)\rightarrow
\bigl(H^{\sigma;\varphi}(\mathbb{R}^{n-1})\bigr)^{\lambda}
\quad\mbox{whenever}\quad\sigma>0.
\end{equation}
Besides, consider the linear mapping of sewing of $\Gamma$
\begin{equation*}
K:(w_{1},\ldots,w_{\lambda})\mapsto\sum_{j=1}^{\lambda}\,
O_{j}\bigl((\eta_{j}w_{j})\circ\theta_{j}^{-1}\bigr),
\quad\mbox{with}\quad w_{1},\ldots,w_{\lambda}\in L_2(\mathbb{R}^{n-1}).
\end{equation*}
Here, each function $\eta_{j}\in
C_{0}^{\infty}(\mathbb{R}^{n-1})$ is chosen so that $\eta_{j}=1$ on the
set $\theta^{-1}_{j}(\mathrm{supp}\,\chi_{j})$, whereas $O_{j}$
denotes the operator of the extension by zero to $\Gamma$ of a function given on $\Gamma_j$. The restriction of this mapping to $(H^{\sigma;\varphi}(\mathbb{R}^{n-1}))^{\lambda}$ is a bounded  operator
\begin{equation*}
K:\bigl(H^{\sigma;\varphi}(\mathbb{R}^{n-1})\bigr)^{\lambda}\to
H^{\sigma;\varphi}(\Gamma)\quad\mbox{whenever}\quad\sigma>0,
\end{equation*}
and this operator is left inverse to \eqref{8f52} (see \cite[the proof of Theorem~2.2]{MikhailetsMurach14}).

The mapping $K$ induces the operator $K_{1}$ of the sewing of the manifold $S=\Gamma\times(0,\tau)$ by the formula
\begin{equation*}
\bigl(K_1(u_1,\dots,u_\lambda)\bigr)(x,t):=
\bigl(K(u_1(\cdot,t),\ldots,u_\lambda(\cdot,t))\bigr)(x)
\end{equation*}
for arbitrary functions $u_1,\dots,u_\lambda\in L_2(\Pi)$ and almost all $x\in\Gamma$ and $t\in(0,\tau)$. The linear operator $K_1$ acts continuously between $(L_2(\Pi))^{\lambda}$ and $L_2(S)$. Moreover, its restriction to $(H^{\sigma,\sigma/(2b);\varphi}(\Pi))^{\lambda}$ is a bounded  operator
\begin{equation}\label{8f53}
K_{1}:(H^{\sigma,\sigma/(2b);\varphi}(\Pi))^{\lambda}\to
H^{\sigma,\sigma/(2b);\varphi}(S)\quad\mbox{whenever}\quad\sigma>0
\end{equation}
(see \cite[the proof of Theorem~2]{Los16JMathSci}).

Given $v:=(v_0,v_1,\dots,v_{r-1})\in(L_{2}(\Gamma))^{r}$, we set
\begin{equation*}
Tv:=K_1\bigl(T_1(v_{0,1},\ldots,v_{r-1,1}),\ldots,
T_1(v_{0,\lambda},\ldots,v_{r-1,\lambda})\bigr),
\end{equation*}
where
$$
(v_{k,1},\ldots,v_{k,\lambda}):=
Lv_{k}\in(L_{2}(\mathbb{R}^{n-1}))^{\lambda}
$$
for each integer $k\in\{0,\ldots,r-1\}$. The linear mapping $v\mapsto Tv$ acts continuously between $(L_{2}(\Gamma))^{r}$ and $L_{2}(S)$, which follows directly from the corresponding properties of $L$, $T_1$, and~$K_1$. The restriction of this mapping to $\mathbb{H}^{s;\varphi}(\Gamma)$ is the required bounded operator \eqref{8f43}. Indeed, its boundedness follows immediately from the boundedness of the operators \eqref{8f51}, \eqref{8f52}, and \eqref{8f53}.
Besides, the operator \eqref{8f43} is a right inverse of \eqref{8f65} because
\begin{align*}
(RTv)_k&=\bigl(RK_1\bigl(T_1(v_{0,1},\ldots,v_{r-1,1}),\ldots,
T_1(v_{0,\lambda},\ldots,v_{r-1,\lambda})\bigr)\bigr)_k\\
&=K\bigl(\bigl(R_1T_1(v_{0,1},\ldots,v_{r-1,1})\bigr)_k,\ldots,
\bigl(R_1T_1(v_{0,\lambda},\ldots,v_{r-1,\lambda})\bigr)_k\bigr)\\
&=K(v_{k,1},\dots,v_{k,\lambda})=KLv_k=v_k
\end{align*}
for an arbitrary  vector $v=(v_0,v_1,\dots,v_{r-1})\in\mathbb{H}^{s;\varphi}(\Gamma)$.
Here, the index $k$ runs over the set $\{0,\dots,r-1\}$ and denotes the $k$-th component of a vector.
\end{proof}

Using this lemma, we will now prove a version of Proposition \ref{8prop5} for the range of  isomorphism \eqref{16f11}.
It is sufficient to restrict ourselves to the $s\notin E$ case; recall that $E$ is denoted by \eqref{setE}.
Let $\{J_l:1\leq l\in\mathbb{Z}\}$ stand for the collection of all
connected components of the set $(\sigma_0,\infty)\setminus E$. Each  component $J_l$ is a certain finite subinterval of $(\sigma_0,\infty)$.

\begin{lemma}\label{16lem7.4}
Let $1\leq l\in\mathbb{Z}$. Suppose that real numbers $s_0,s,s_1\in J_{l}$ satisfy the inequality $s_0<s<s_1$ and that $\varphi\in\mathcal{M}$. Define an interpolation parameter $\psi\in\mathcal{B}$ by formula \eqref{8f16}.
Then the equality of spaces
\begin{equation}\label{16f42}
\mathcal{Q}^{s-2m,(s-2m)/(2b);\varphi}=\,
\bigl[\mathcal{Q}^{s_0-2m,(s_0-2m)/(2b)},
\mathcal{Q}^{s_1-2m,(s_1-2m)/(2b)}\bigr]_{\psi}
\end{equation}
holds true up to equivalence of norms.
\end{lemma}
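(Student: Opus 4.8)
The plan is to realize $\mathcal{Q}^{s-2m,\dots;\varphi}$ as $\mathcal{H}^{s-2m,\dots;\varphi}\cap\mathcal{Q}^{s_0-2m,\dots}$ (with $\varphi\equiv 1$ on the second factor) and then invoke the subspace interpolation result, Proposition~\ref{8prop1}, after reducing the interpolation of $\mathcal{H}$ to that of its summands via Proposition~\ref{8prop2} and Propositions~\ref{8prop4}--\ref{8prop5}. The first step is to handle the target space $\mathcal{H}$: since $\lambda=2m$, $\lambda=m_j+1/2$, or $\lambda=2bk+b$ are all $\leq s_0$ when $s_0\in J_l$ (one must check $s_0>\sigma_0\geq 2m\geq m_j+1$ and $s_0-2bk-b>0$ for $k\leq\varkappa-1$, so the hypotheses of Propositions~\ref{8prop4} and~\ref{8prop5} are met), applying Proposition~\ref{8prop2} componentwise gives
$$
\mathcal{H}^{s-2m,(s-2m)/(2b);\varphi}=
\bigl[\mathcal{H}^{s_0-2m,(s_0-2m)/(2b)},
\mathcal{H}^{s_1-2m,(s_1-2m)/(2b)}\bigr]_{\psi}
$$
with equivalence of norms.

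The second, central step is to produce a projector $P$ on $\mathcal{H}^{s_0-2m,(s_0-2m)/(2b)}$ that maps each $\mathcal{H}^{s_i-2m,(s_i-2m)/(2b)}$ onto $\mathcal{Q}^{s_i-2m,(s_i-2m)/(2b)}$ for $i=0,1$ (and onto the $\varphi$-version as well). Here is where Lemma~\ref{8lem1} enters: the compatibility conditions \eqref{16f8} express $\partial_t^k g_j\!\upharpoonright\!\Gamma$ as a fixed differential-trace operator applied to $(f,h_0,\dots,h_{\varkappa-1})$, so the "defect" $F\mapsto$ (residuals of \eqref{16f8}) is a bounded linear map into a finite orthogonal sum $\mathbb{H}(\Gamma)$ of spaces $H^{\cdot;\varphi}(\Gamma)$; Lemma~\ref{8lem1} furnishes a bounded right inverse $T$ of the Cauchy-data operator $R$ on the $g_j$-components, simultaneously bounded on all the relevant Sobolev and H\"ormander scales. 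Using $R$, $T$, and the explicit formulas \eqref{16f9B} one builds $P$ by subtracting from each $g_j$ a correction $T(\dots)$ that kills exactly the residuals, so that $P$ is idempotent, $PF=F$ iff $F$ satisfies \eqref{16f8}, and $P$ is bounded on every scale involved because every ingredient ($R$, $T$, the differential operators, the traces) is. Thus $P$ is a common projector onto $\mathcal{Q}^{s_i-2m,\dots}$ inside $\mathcal{H}^{s_i-2m,\dots}$.

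With $P$ in hand, Proposition~\ref{8prop1} applied to the pair $\bigl[\mathcal{H}^{s_0-2m,\dots},\mathcal{H}^{s_1-2m,\dots}\bigr]$ and the subspace $\mathcal{Q}^{s_0-2m,\dots}$ yields
$$
\bigl[\mathcal{Q}^{s_0-2m,(s_0-2m)/(2b)},
\mathcal{Q}^{s_1-2m,(s_1-2m)/(2b)}\bigr]_{\psi}
=\bigl[\mathcal{H}^{s_0-2m,\dots},\mathcal{H}^{s_1-2m,\dots}\bigr]_{\psi}\cap
\mathcal{Q}^{s_0-2m,(s_0-2m)/(2b)}
$$
up to equivalence of norms, and the right-hand side equals $\mathcal{H}^{s-2m,(s-2m)/(2b);\varphi}\cap\mathcal{Q}^{s_0-2m,\dots}$ by the first step. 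It remains to identify this intersection with $\mathcal{Q}^{s-2m,(s-2m)/(2b);\varphi}$; but that is precisely the description of the latter space given in the text preceding Theorem~\ref{16th4.1} (the identity $\mathcal{Q}^{s-2m,\dots;\varphi}=\mathcal{H}^{s-2m,\dots;\varphi}\cap\mathcal{Q}^{s-\varepsilon-2m,\dots}$, valid here since $s,s_0\in J_l$ and one may take $s_0$ in place of $s-\varepsilon$), finishing the proof.

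\textbf{Main obstacle.} The delicate point is the construction of the single projector $P$ that is simultaneously bounded on the two endpoint Sobolev pairs and on the H\"ormander space, and whose range is exactly the compatibility subspace: one must track the various orders $s-2bk-b$, $s-m_j-1/2-2bk-b$ arising in \eqref{16f8}--\eqref{16f9B}, verify they stay positive so that the traces to $\Gamma$ are defined on the whole interval $J_l$ (this is exactly why the $s\in E$ values are excluded), and confirm that Lemma~\ref{8lem1}'s operator $T$ lands in the right $H^{s,s/(2b);\varphi}(S)$-component so that $PF$ still belongs to $\mathcal{H}^{s-2m,\dots;\varphi}$.
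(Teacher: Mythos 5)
Your proposal is correct and follows essentially the same route as the paper: interpolate the ambient spaces $\mathcal{H}$ componentwise via Propositions~\ref{8prop2}, \ref{8prop4}, \ref{8prop5}, build a common projector onto the compatibility subspaces by correcting each $g_j$ with $T(\text{residuals of \eqref{16f8}})$ using the right inverse from Lemma~\ref{8lem1}, apply Proposition~\ref{8prop1}, and identify the resulting intersection with $\mathcal{Q}^{s-2m,(s-2m)/(2b);\varphi}$ using that $s,s_0\in J_l$ impose the same compatibility conditions. The "main obstacle" you flag (simultaneous boundedness of the projector across the scales and positivity of the trace orders on all of $J_l$) is exactly the point the paper's proof verifies.
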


\begin{proof}
It relies on Proposition~\ref{8prop1} and the interpolation formula
for the pair of Sobolev spaces $\mathcal{H}^{s_0-2m,(s_0-2m)/(2b)}$
and $\mathcal{H}^{s_1-2m,(s_1-2m)/(2b)}$. Bearing in mind the compatibility conditions \eqref{16f8}, consider the set
\begin{equation}\label{set-kj}
\biggl\{k\in\mathbb{Z}:0\leq k<\frac{s-m_j-1/2-b}{2b}\biggr\}
\end{equation}
for each $j\in\{1,\dots,m\}$. This set
does not depend on $s\in J_l$. Let
$q_{l,j}^{\star}$ denote the number of all elements of
\eqref{set-kj}, and put $q_{l,j}:=q_{l,j}^{\star}-1$ for convenience.

Let us build a linear mapping $P_l$ on
\begin{equation}\label{16f43-aa}
\bigcup_{\sigma\in J_{l}}\mathcal{H}^{\sigma-2m,(\sigma-2m)/(2b)}
\end{equation}
that its restriction to the space
$\mathcal{H}^{\sigma-2m,(\sigma-2m)/(2b)}$ is a projector of this space on its subspace $\mathcal{Q}^{\sigma-2m,(\sigma-2m)/(2b)}$ for every
$\sigma\in J_{l}$. Choosing a vector
$$
F:=\bigl(f,g_1,\dots,g_m,h_0,\dots,h_{\varkappa-1}\bigr)\in
\bigcup_{\sigma\in J_{l}}\mathcal{H}^{\sigma-2m,(\sigma-2m)/(2b)}
$$
arbitrarily, we put
\begin{equation}\label{16f43}
\left\{
  \begin{array}{ll}
    g^{*}_{j}:=g_j & \hbox{whenever}\; q_{l,j}=-1,\\
    g^{*}_{j}:=g_j+T_{l,j}(w_{j,0},\dots,w_{j,q_{l,j}})
    & \hbox{whenever}\;q_{l,j}\geq0
  \end{array}
\right.
\end{equation}
for each $j\in\{1,\dots,m\}$. Here,
\begin{align*}
w_{j,0}&:=B_{j,0}(v_{0},\dots,v_{[m_j/(2b)]})\!\upharpoonright\!\Gamma-g_j\!\upharpoonright\!\Gamma,\\
&\dots\\
w_{j,q_{l,j}}&:=B_{j,q_{l,j}}(v_{0},\dots,v_{[m_j/(2b)]+q_{l,j}})\!\upharpoonright\!\Gamma-
\partial_{t}^{q_{l,j}} g_j\!\upharpoonright\!\Gamma,
\end{align*}
with the functions $v_0,v_1\dots,v_{[m_j/(2b)]+q_{l,j}}$ and the differential operators $B_{j,0},\ldots,B_{j,q_{l,j}}$ being defined
by \eqref{16f9} and \eqref{16f9B} respectively and with $T_{l,j}$ denoting the linear mapping $T$ from Lemma~\ref{8lem1} in the $r=q_{l,j}^\star$ case.

The linear mapping
\begin{equation*}
P_l:\,\bigl(f,g_1,\dots,g_m,h_0,\dots,h_{\varkappa-1}\bigr)\mapsto
\bigl(f,g^*_1,\dots,g^*_m,h_0,\dots,h_{\varkappa-1}\bigr)
\end{equation*}
given on the space \eqref{16f43-aa} is required. Indeed, its restriction to the space $\mathcal{H}^{\sigma-2m,(\sigma-2m)/(2b)}$ is a bounded operator on this space for every $\sigma\in J_l$, which follows from
\eqref{16f9}--\eqref{8f69aa} and \eqref{8f43}. Note that we
use the boundedness of the operators \eqref{8f43} in the case where
$r=q_{l,j}^\star$, $s=\sigma-m_j-1/2$, and $\varphi(\cdot)\equiv1$,
with the condition $s>2br-b$ being satisfied because
\begin{equation*}
q_{l,j}^\star<\frac{\sigma-m_j-1/2-b}{2b}+1\quad\mbox{whenever}\quad
\sigma\in J_l.
\end{equation*}
Besides, it follows from the definition of $P_l$ and the compatibility
conditions \eqref{16f8} that
$P_lF\in\mathcal{Q}^{\sigma-2m,(\sigma-2m)/(2b)}$ for every
$F\in\mathcal{H}^{\sigma-2m,(\sigma-2m)/(2b)}$. Indeed, these conditions for the vector $P_{l}F=(f,g^*_1,\dots,g^*_m,h_0,\dots,h_{\varkappa-1})$ become
\begin{equation*}
\partial^{k}_t g_j^*\!\upharpoonright\!\Gamma=
B_{j,k}(v_0,\dots,v_{[m_j/(2b)]+k})\!\upharpoonright\!\Gamma
\end{equation*}
for all $j$ and $k$ indicated in \eqref{16f8}. However,
\begin{align*}
\partial^{k}_t g_j^*\!\upharpoonright\!\Gamma=&
\partial^{k}_t g_j\!\upharpoonright\!\Gamma+
\partial^{k}_t T_{l,j}(w_{j,0},\dots,w_{j,q_{l,j}})\!\upharpoonright\!\Gamma\\
=& \partial^{k}_t g_j\!\upharpoonright\!\Gamma+w_{j,k}=
B_{j,k}(v_0,\dots,v_{[m_j/(2b)]+k})\!\upharpoonright\!\Gamma
\end{align*}
provided that $q_{l,j}\geq0$. If $q_{l,j}=-1$, there will not be the compatibility conditions involving~$g_{j}$. Thus, the vector $P_{l}F$ satisfies the compatibility conditions, i.e. $P_{l}F\in\mathcal{Q}^{\sigma-2m,(\sigma-2m)/(2b)}$. Moreover, $F\in\mathcal{Q}^{\sigma-2m,(\sigma-2m)/(2b)}$ implies that
$P_lF=F$. Namely, if $F\in\mathcal{Q}^{\sigma-2m,(\sigma-2m)/(2b)}$, then
\eqref{16f8} holds, which entails that all $w_{j,k}=0$, i.e.
$g_1^*=g_1$, ..., $g_m^*=g_m$.

Now we use Proposition~\ref{8prop1} in which
$X_j:=\mathcal{H}^{s_j-2m,(s_j-2m)/(2b)}$ and
$Y_j:=\mathcal{Q}^{s_j-2m,(s_j-2m)/(2b)}$ for each $j\in\{0,1\}$,
and $P:=P_l$. According to this proposition, the pair
$$
\bigl[\mathcal{Q}^{s_0-2m,(s_0-2m)/(2b)},
\mathcal{Q}^{s_1-2m,(s_1-2m)/(2b)}\bigr]
$$
is regular, and
\begin{equation}\label{16f44}
\begin{aligned}
&\bigl[\mathcal{Q}^{s_0-2m,(s_0-2m)/(2b)},
\mathcal{Q}^{s_1-2m,(s_1-2m)/(2b)}\bigr]_{\psi}\\
&=\bigl[\mathcal{H}^{s_0-2m,(s_0-2m)/(2b)},
\mathcal{H}^{s_1-2m,(s_1-2m)/(2b)}\bigr]_{\psi}\cap
\mathcal{Q}^{s_0-2m,(s_0-2m)/(2b)}.
\end{aligned}
\end{equation}
The right-hand side of this equality is a subspace of
$$
\bigl[\mathcal{H}^{s_0-2m,(s_0-2m)/(2b)},
\mathcal{H}^{s_1-2m,(s_1-2m)/(2b)}\bigr]_{\psi}.
$$
Owing to Propositions \ref{8prop2}, \ref{8prop4}, and \ref{8prop5},
we obtain the following equalities:
\begin{align*}
\bigl[&\mathcal{H}^{s_0-2m,(s_0-2m)/(2b)},
\mathcal{H}^{s_1-2m,(s_1-2m)/(2b)}\bigr]_{\psi} \\
&=\bigl[H^{s_0-2m,(s_0-2m)/(2b)}(\Omega),H^{s_1-2m,(s_1-2m)/(2b)}(\Omega)\bigr]_{\psi}\\
&\qquad
\oplus\bigoplus_{j=1}^{m}
\bigl[H^{s_0-m_j-1/2,\,(s_0-m_j-1/2)/(2b)}(S),
H^{s_1-m_j-1/2,\,(s_1-m_j-1/2)/(2b)}(S)\bigr]_{\psi}\\
&\qquad
\oplus\bigoplus_{k=0}^{\varkappa-1}\bigl[H^{s_0-2bk-b}(G),H^{s_1-2bk-b}(G)\bigr]_{\psi}\\
&=H^{s-2m,(s-2m)/(2b);\varphi}(\Omega)
\oplus\bigoplus_{j=1}^{m}H^{s-m_j-1/2,(s-m_j-1/2)/(2b);\varphi}(S)
\oplus\bigoplus_{k=0}^{\varkappa-1}H^{s-2bk-b;\varphi}(G)\\
&=\mathcal{H}^{s-2m,(s-2m)/(2b);\varphi}.
\end{align*}
Thus,
\begin{equation}\label{16f45}
\bigl[\mathcal{H}^{s_0-2m,(s_0-2m)/(2b)},
\mathcal{H}^{s_1-2m,(s_1-2m)/(2b)}\bigr]_{\psi}=
\mathcal{H}^{s-2m,(s-2m)/(2b);\varphi}
\end{equation}
up to equivalence of norms.
Formulas \eqref{16f44} and \eqref{16f45} give
\begin{align*}
&\bigl[\mathcal{Q}^{s_0-2m,(s_0-2m)/(2b)},
\mathcal{Q}^{s_1-2m,(s_1-2m)/(2b)}\bigr]_{\psi}\\
&=\mathcal{H}^{s-2m,(s-2m)/(2b);\varphi}\cap
\mathcal{Q}^{s_0-2m,(s_0-2m)/(2b)}=
\mathcal{Q}^{s-2m,(s-2m)/(2b);\varphi}.
\end{align*}
The latter equality holds because $s,s_0\in J_l$, i.e.
the elements of the subspace $\mathcal{Q}^{s-2m,(s-2m)/(2b);\varphi}$
satisfy the same compatibility conditions as the elements of
$\mathcal{Q}^{s_0-2m,(s_0-2m)/(2b)}$.
\end{proof}

Lemma \ref{16lem7.4} just proved will be used in the proof of Theorem~\ref{16th4.1} in the $s\notin E$ case. Examining
the opposite case, we need the following two results.

\begin{lemma}\label{16lem7.5}
Let numbers $s,\varepsilon\in\mathbb{R}$ satisfy
$s>\varepsilon>0$, and let $\varphi\in\mathcal{M}$.
Then the equality of spaces
\begin{equation}\label{16f46}
H^{s,s/(2b);\varphi}(W)=
\bigl[H^{s-\varepsilon,(s-\varepsilon)/(2b);\varphi}(W),
H^{s+\varepsilon,(s+\varepsilon)/(2b);\varphi}(W)\bigr]_{1/2}
\end{equation}
holds true up to equivalence of norms provided that $W=\Omega$ or $W=S$.
\end{lemma}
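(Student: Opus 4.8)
The plan is to express the three spaces occurring in \eqref{16f46} as interpolation spaces generated by a single regular pair of anisotropic Sobolev spaces, and then to apply the Reiteration Theorem (Proposition~\ref{8prop3}).

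First I would fix real numbers $s_0,s_1$ with $0\le s_0<s-\varepsilon$ and $s_1>s+\varepsilon$; this is possible since $s-\varepsilon>0$ by hypothesis. Put
\begin{equation*}
X:=\bigl[H^{s_0,s_0/(2b)}(W),\,H^{s_1,s_1/(2b)}(W)\bigr],
\end{equation*}
which is a regular pair of Hilbert spaces because the embedding of the second space in the first is continuous and dense (see \eqref{8f5a}). For every $r\in\{s-\varepsilon,\,s,\,s+\varepsilon\}$ let $\psi_r\in\mathcal{B}$ be the interpolation parameter given by formula \eqref{8f16} with $r$ in place of $s$ and with the above fixed pair $s_0<s_1$. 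Since $0\le s_0<r<s_1$ and $\lambda:=0\le s_0$, Proposition~\ref{8prop5} yields
\begin{equation*}
H^{r,r/(2b);\varphi}(W)=X_{\psi_r}=\bigl[H^{s_0,s_0/(2b)}(W),\,H^{s_1,s_1/(2b)}(W)\bigr]_{\psi_r}
\end{equation*}
up to equivalence of norms, both for $W=\Omega$ and for $W=S$.

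Next I would apply Proposition~\ref{8prop3}, taking its parameters $\alpha,\beta$ to be $\psi_{s-\varepsilon}$ and $\psi_{s+\varepsilon}$ and its parameter $\psi$ to be the power function $t\mapsto t^{1/2}$ (so that $[X_{\psi_{s-\varepsilon}},X_{\psi_{s+\varepsilon}}]_{\psi}=[X_{\psi_{s-\varepsilon}},X_{\psi_{s+\varepsilon}}]_{1/2}$). The hypothesis that $\psi_{s-\varepsilon}/\psi_{s+\varepsilon}$ be bounded near infinity holds because this quotient equals $t^{-2\varepsilon/(s_1-s_0)}$ for $t\ge1$. Proposition~\ref{8prop3} then gives $[X_{\psi_{s-\varepsilon}},X_{\psi_{s+\varepsilon}}]_{1/2}=X_{\omega}$ with equality of norms, where $\omega(t):=\psi_{s-\varepsilon}(t)\,\bigl(\psi_{s+\varepsilon}(t)/\psi_{s-\varepsilon}(t)\bigr)^{1/2}=\sqrt{\psi_{s-\varepsilon}(t)\,\psi_{s+\varepsilon}(t)}$. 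A direct computation from \eqref{8f16} shows $\omega=\psi_s$: for $t\ge1$ one has $\psi_{s-\varepsilon}(t)\,\psi_{s+\varepsilon}(t)=t^{2(s-s_0)/(s_1-s_0)}\,\varphi^2\bigl(t^{1/(s_1-s_0)}\bigr)$, hence $\omega(t)=t^{(s-s_0)/(s_1-s_0)}\varphi\bigl(t^{1/(s_1-s_0)}\bigr)=\psi_s(t)$, while for $0<t<1$ both $\psi_{s-\varepsilon}(t)$ and $\psi_{s+\varepsilon}(t)$ equal $\varphi(1)$, so $\omega(t)=\varphi(1)=\psi_s(t)$. Therefore $X_{\omega}=X_{\psi_s}=H^{s,s/(2b);\varphi}(W)$, and combining the displayed identities gives
\begin{equation*}
\bigl[H^{s-\varepsilon,(s-\varepsilon)/(2b);\varphi}(W),\,H^{s+\varepsilon,(s+\varepsilon)/(2b);\varphi}(W)\bigr]_{1/2}=H^{s,s/(2b);\varphi}(W)
\end{equation*}
up to equivalence of norms, which is \eqref{16f46}.

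I do not expect a genuine obstacle here: the argument is a routine reiteration. The points that need care are checking that the pair $X$ is regular over $W=\Omega$ and $W=S$, verifying that $\psi_{s-\varepsilon},\psi_{s+\varepsilon}\in\mathcal{B}$ are interpolation parameters with $\psi_{s-\varepsilon}/\psi_{s+\varepsilon}$ bounded near infinity so that Proposition~\ref{8prop3} applies, and noting that Proposition~\ref{8prop5} provides only equivalence (not equality) of norms over $\Omega$ and $S$, which is the reason the conclusion of the lemma is stated up to equivalence of norms.
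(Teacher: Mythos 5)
Your proposal is correct and follows essentially the same route as the paper: both proofs write the three spaces as interpolation spaces of a single underlying anisotropic Sobolev pair via Proposition~\ref{8prop5} (with $\lambda=0$) and then invoke the reiteration theorem (Proposition~\ref{8prop3}) with the power parameter $t^{1/2}$, checking that the resulting parameter $\omega=\sqrt{\psi_{s-\varepsilon}\psi_{s+\varepsilon}}$ coincides with $\psi_s$. The only cosmetic difference is that the paper fixes the symmetric endpoints $s_0=s-\varepsilon-\delta$, $s_1=s+\varepsilon+\delta$, whereas you allow any $0\le s_0<s-\varepsilon$ and $s_1>s+\varepsilon$; your computation shows this generality is harmless.
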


\begin{proof}
Choose a number $\delta>0$ such that $s-\varepsilon-\delta>0$.
According to Proposition~\ref{8prop5} for $\lambda=0$, we have the equalities
\begin{equation*}
H^{s-\varepsilon,(s-\varepsilon)/(2b);\varphi}(W)=
\bigl[H^{s-\varepsilon-\delta,(s-\varepsilon-\delta)/(2b)}(W),
H^{s+\varepsilon+\delta,(s+\varepsilon+\delta)/(2b)}(W)\bigr]_{\alpha}
\end{equation*}
and
\begin{equation*}
H^{s+\varepsilon,(s+\varepsilon)/(2b);\varphi}(W)=
\bigl[H^{s-\varepsilon-\delta,(s-\varepsilon-\delta)/(2b)}(W),
H^{s+\varepsilon+\delta,(s+\varepsilon+\delta)/(2b)}(W)\bigr]_{\beta}.
\end{equation*}
Here, the interpolation parameters $\alpha$ and $\beta$
are defined by the formulas
\begin{equation*}
\alpha(r):=r^{\delta/(2\varepsilon+2\delta)}
\varphi(r^{1/(2\varepsilon+2\delta)})
\quad\mbox{and}
\quad
\beta(r):=r^{(2\varepsilon+\delta)/(2\varepsilon+2\delta)}
\varphi(r^{1/(2\varepsilon+2\delta)})
\quad\mbox{if}\quad r\geq1,
\end{equation*}
and $\alpha(r)=\beta(r):=1$ if $0<r<1$. Owing to Propositions \ref{8prop3} and \ref{8prop5}, we then get
\begin{align*}
\bigl[&H^{s-\varepsilon,(s-\varepsilon)/(2b);\varphi}(W),
H^{s+\varepsilon,(s+\varepsilon)/(2b);\varphi}(W)\bigr]_{1/2}\\
\notag
&=\Bigl[
\bigl[H^{s-\varepsilon-\delta,(s-\varepsilon-\delta)/(2b)}(W),
H^{s+\varepsilon+\delta,(s+\varepsilon+\delta)/(2b)}(W)\bigr]_{\alpha},\\
&\quad\quad\bigl[H^{s-\varepsilon-\delta,(s-\varepsilon-\delta)/(2b)}(W),
H^{s+\varepsilon+\delta,(s+\varepsilon+\delta)/(2b)}(W)\bigr]_{\beta}
\Bigr]_{1/2}\\ \label{8f74}
&=\bigl[H^{s-\varepsilon-\delta,(s-\varepsilon-\delta)/(2b)}(W),
H^{s+\varepsilon+\delta,(s+\varepsilon+\delta)/(2b)}(W)\bigr]_{\omega}=
H^{s,s/(2b);\varphi}(W).
\end{align*}
Here, the interpolation parameter $\omega$ is defined by the formulas
\begin{equation*}
\omega(r):=\alpha(r)(\beta(r)/\alpha(r))^{1/2}=r^{1/2}\varphi(r^{1/(2\varepsilon+2\delta)})
\quad\mbox{if}\quad r\geq1
\end{equation*}
and $\omega(r):=1$ if $0<r<1$. As to Proposition \ref{8prop5}, note that
the interpolation parameter $\omega$ equals the right-hand side of \eqref{8f16} if we put $s_0:=s-\varepsilon-\delta$ and $s_1:=s+\varepsilon+\delta$.
Thus, \eqref{16f46} is valid.
\end{proof}

\begin{lemma}\label{16lem7.6}
Let numbers $s\in\mathbb{R}$ and $\varepsilon>0$ satisfy
$s-\varepsilon>\sigma_0$, and let $\varphi\in\mathcal{M}$.
Then the equality of spaces
\begin{equation}\label{16f67}
\mathcal{H}^{s-2m,(s-2m)/(2b);\varphi}=
\bigl[\mathcal{H}^{s-\varepsilon-2m,(s-\varepsilon-2m)/(2b);\varphi},
\mathcal{H}^{s+\varepsilon-2m,(s+\varepsilon-2m)/(2b);\varphi}\bigr]_{1/2}
\end{equation}
holds true up to equivalence of norms.
\end{lemma}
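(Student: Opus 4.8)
The plan is to reduce the asserted formula for the orthogonal sum $\mathcal{H}^{s-2m,(s-2m)/(2b);\varphi}$ to the corresponding formulas for its individual summands and then invoke Lemma~\ref{16lem7.5}. First I would apply Proposition~\ref{8prop2}: it rewrites the right-hand side of \eqref{16f67} as the orthogonal sum of the interpolation spaces built from the respective summands of $\mathcal{H}^{s-\varepsilon-2m,(s-\varepsilon-2m)/(2b);\varphi}$ and $\mathcal{H}^{s+\varepsilon-2m,(s+\varepsilon-2m)/(2b);\varphi}$. Hence it suffices to prove the component identities
\[
H^{\sigma,\sigma/(2b);\varphi}(W)=
\bigl[H^{\sigma-\varepsilon,(\sigma-\varepsilon)/(2b);\varphi}(W),
H^{\sigma+\varepsilon,(\sigma+\varepsilon)/(2b);\varphi}(W)\bigr]_{1/2}
\]
for $W=\Omega$ with $\sigma:=s-2m$, for $W=S$ with $\sigma:=s-m_j-1/2$ ($j=1,\dots,m$), together with the isotropic analog
\[
H^{\sigma;\varphi}(G)=
\bigl[H^{\sigma-\varepsilon;\varphi}(G),H^{\sigma+\varepsilon;\varphi}(G)\bigr]_{1/2}
\]
for $\sigma:=s-2bk-b$ ($k=0,\dots,\varkappa-1$).

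The two anisotropic cases follow at once from Lemma~\ref{16lem7.5} applied with the indicated $\sigma$ in place of $s$, once its hypothesis $\sigma>\varepsilon>0$ is checked. This is immediate from the definition of $\sigma_0$: for $W=\Omega$ we have $\sigma-\varepsilon=(s-\varepsilon)-2m>\sigma_0-2m\geq0$, while for $W=S$ we have $\sigma-\varepsilon=(s-\varepsilon)-m_j-1/2>\sigma_0-m_j-1/2\geq1/2>0$ since $\sigma_0\geq m_j+1$; and $\varepsilon>0$ is given.

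For the isotropic $G$-summands I would repeat the argument of the proof of Lemma~\ref{16lem7.5} verbatim, with Proposition~\ref{8prop4} replacing Proposition~\ref{8prop5}. Namely, fix $\delta>0$ and use Proposition~\ref{8prop4} (with $\lambda=0$) to write
\[
H^{\sigma-\varepsilon;\varphi}(G)=
\bigl[H^{\sigma-\varepsilon-\delta}(G),H^{\sigma+\varepsilon+\delta}(G)\bigr]_{\alpha},
\qquad
H^{\sigma+\varepsilon;\varphi}(G)=
\bigl[H^{\sigma-\varepsilon-\delta}(G),H^{\sigma+\varepsilon+\delta}(G)\bigr]_{\beta},
\]
with the same parameters $\alpha,\beta$ as in the proof of Lemma~\ref{16lem7.5}. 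Then Reiteration Theorem (Proposition~\ref{8prop3}) applied with these $\alpha,\beta$ and the classical parameter $\psi(r)\equiv r^{1/2}$, followed by one more use of Proposition~\ref{8prop4}, gives
\[
\bigl[H^{\sigma-\varepsilon;\varphi}(G),H^{\sigma+\varepsilon;\varphi}(G)\bigr]_{1/2}=
\bigl[H^{\sigma-\varepsilon-\delta}(G),H^{\sigma+\varepsilon+\delta}(G)\bigr]_{\omega}=
H^{\sigma;\varphi}(G),
\]
where $\omega(r):=r^{1/2}\varphi(r^{1/(2\varepsilon+2\delta)})$ for $r\geq1$ is exactly the right-hand side of \eqref{8f16} with $s_0:=\sigma-\varepsilon-\delta$, $s:=\sigma$, $s_1:=\sigma+\varepsilon+\delta$. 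Since Proposition~\ref{8prop4} imposes no sign restriction on the orders, this works for every real $\sigma$, so no positivity of $\sigma=s-2bk-b$ is needed.

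Combining the three families of component identities through Proposition~\ref{8prop2} then yields \eqref{16f67} up to equivalence of norms. There is no genuine obstacle here: the lemma is essentially a bookkeeping consequence of the interpolation apparatus of Section~\ref{16sec6} and of Lemma~\ref{16lem7.5}; the only points deserving care are the elementary verification of $\sigma>\varepsilon>0$ for the $\Omega$- and $S$-components, and the observation that the isotropic $G$-components are not covered by Lemma~\ref{16lem7.5} itself and must instead be handled directly via Propositions~\ref{8prop3} and~\ref{8prop4}.
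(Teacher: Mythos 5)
Your proposal is correct and takes essentially the same route as the paper's proof: Proposition~\ref{8prop2} to split the orthogonal sums into components, Lemma~\ref{16lem7.5} (formula \eqref{16f46}) for the anisotropic $\Omega$- and $S$-summands, and the isotropic analog of that formula for the $G$-summands. The only difference is that the paper simply cites \cite[Lemma~4.3]{MikhailetsMurach14} for the isotropic analog, whereas you reprove it by rerunning the reiteration argument of Lemma~\ref{16lem7.5} with Proposition~\ref{8prop4} in place of Proposition~\ref{8prop5}; both are valid, and your hypothesis checks ($\sigma>\varepsilon>0$ for the anisotropic components) are accurate.
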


\begin{proof}
Relation \eqref{16f67} follows from Proposition~\ref{8prop2} and formula \eqref{16f46} and its analog for the isotropic space $H^{\sigma;\varphi}(G)$, with $\sigma>0$ (see \cite[Lemma~4.3]{MikhailetsMurach14}).
Indeed,
\begin{align*}
\bigl[&\mathcal{H}^{s-\varepsilon-2m,(s-\varepsilon-2m)/(2b);\varphi},
\mathcal{H}^{s+\varepsilon-2m,(s+\varepsilon-2m)/(2b);\varphi}\bigr]_{1/2} \\&=
\biggl[H^{s-\varepsilon-2m,(s-\varepsilon-2m)/(2b);\varphi}(\Omega)\oplus
\bigoplus_{j=1}^{m}
H^{s-\varepsilon-m_j-1/2,(s-\varepsilon-m_j-1/2)/(2b);\varphi}(S)\\
&\qquad\oplus\bigoplus_{k=0}^{\varkappa-1}
H^{s-\varepsilon-2bk-b;\varphi}(G), \\
&\quad\;\;\, H^{s+\varepsilon-2m,(s+\varepsilon-2m)/(2b);\varphi}(\Omega)\oplus
\bigoplus_{j=1}^{m}H^{s+\varepsilon-m_j-1/2,(s+\varepsilon-m_j-1/2)/(2b);
\varphi}(S)\\
&\qquad\oplus\bigoplus_{k=0}^{\varkappa-1}
H^{s+\varepsilon-2bk-b;\varphi}(G)\biggr]_{1/2}\\
&=\bigl[H^{s-2m-\varepsilon,(s-2m-\varepsilon)/(2b);\varphi}(\Omega),
H^{s-2m+\varepsilon,(s-2m+\varepsilon)/(2b);\varphi}(\Omega)\bigr]_{1/2}\\
&\qquad
\oplus\bigoplus_{j=1}^{m}
\bigl[H^{s-m_j-1/2-\varepsilon,\,(s-m_j-1/2-\varepsilon)/(2b);\varphi}(S),
H^{s-m_j-1/2+\varepsilon,\,(s-m_j-1/2+\varepsilon)/(2b);\varphi}(S)\bigr]_{1/2}\\
&\qquad
\oplus\bigoplus_{k=0}^{\varkappa-1}\bigl[H^{s-2bk-b-\varepsilon;\varphi}(G),
H^{s-2bk-b+\varepsilon;\varphi}(G)\bigr]_{1/2}\\
&=H^{s-2m,(s-2m)/(2b);\varphi}(\Omega)
\oplus\bigoplus_{j=1}^{m}H^{s-m_j-1/2,(s-m_j-1/2)/(2b);\varphi}(S)
\oplus\bigoplus_{k=0}^{\varkappa-1}H^{s-2bk-b;\varphi}(G)\\
&=\mathcal{H}^{s-2m,(s-2m)/(2b);\varphi}.
\end{align*}
\end{proof}

Now we are in position to prove Theorem~\ref{16th4.1}.

\begin{proof}[Proof of Theorem $\ref{16th4.1}$.]
Let $s>\sigma_0$ and $\varphi\in\mathcal{M}$.
We first consider the case where $s\notin E$.
Then $s\in J_{l}$ for a certain integer $l\geq1$.
Choose numbers $s_0,s_1\in J_{l}$ such that $s_0<s<s_1$ and that  $s_j+1/2\notin\mathbb{Z}$ and $s_j/(2b)+1/2\notin\mathbb{Z}$ whenever $j\in\{0,1\}$. According to Zhitarashu \cite[Theorem~9.1]{Zhitarashu85}, the mapping \eqref{16f4}
extends uniquely (by continuity) to an isomorphism
\begin{equation}\label{16f49}
\Lambda:\,H^{s_j,s_j/(2b)}(\Omega)\leftrightarrow
\mathcal{Q}^{s_j-2m,(s_j-2m)/(2b)}
\quad\mbox{for each}\quad j\in\{0,1\}
\end{equation}
(see also the book \cite[Theorem~5.7]{ZhitarashuEidelman98}).
Let $\psi$ be the interpolation parameter~\eqref{8f16}. Then the restriction of the operator
\eqref{16f49} with $j=0$ to the space
\begin{equation*}
\bigl[H^{s_0,s_{0}/(2b)}(\Omega),
H^{s_1,s_{1}/(2b)}(\Omega)\bigr]_{\psi}=
H^{s,s/(2b);\varphi}(\Omega)
\end{equation*}
is an isomorphism
\begin{align}\label{16f50}
\Lambda:\,H^{s,s/(2b);\varphi}(\Omega)\leftrightarrow
\bigl[&\mathcal{Q}^{s_0-2m,(s_0-2m)/(2b)},
\mathcal{Q}^{s_1-2m,(s_1-2m)/(2b)}\bigr]_{\psi}\\ \notag
&=\mathcal{Q}^{{s-2m,(s-2m)/(2b)};\varphi}.
\end{align}
Here, the equalities of spaces hold true up to equivalence of norms due to Proposition~\ref{8prop5} and Lemma~\ref{16lem7.4}. The operator \eqref{16f50} is an extension by continuity
of the mapping \eqref{16f4} because $C^{\infty}(\overline{\Omega})$
is dense in $H^{s,s/(2b);\varphi}(\Omega)$.
Thus, Theorem \ref{16th4.1} is proved in the case considered.

Examine now the $s\in E$ case. Choose $\varepsilon\in(0,1/2)$ arbitrarily.
Since $s\pm\varepsilon\notin E$ and $s-\varepsilon>\sigma_0$, we have the isomorphisms
\begin{equation}\label{8f37}
\Lambda:H^{s\pm\varepsilon,(s\pm\varepsilon)/(2b);\varphi}(\Omega)\leftrightarrow
\mathcal{Q}^{s\pm\varepsilon-2m,(s\pm\varepsilon-2m)/(2b);\varphi},
\end{equation}
as has just been proved.
They imply that the mapping \eqref{16f4} extends uniquely (by continuity) to an isomorphism
\begin{equation}\label{8f38}
\begin{aligned}
\Lambda:&
\bigl[H^{s-\varepsilon,(s-\varepsilon)/(2b);\varphi}(\Omega),
H^{s+\varepsilon,(s+\varepsilon)/(2b);\varphi}(\Omega)\bigr]_{1/2}\\
&\leftrightarrow
\bigl[\mathcal{Q}^{s-\varepsilon-2m,(s-\varepsilon-2m)/(2b);\varphi},
\mathcal{Q}^{s+\varepsilon-2m,(s+\varepsilon-2m)/(2b);\varphi}\bigr]_{1/2}=
\mathcal{Q}^{s-2m,(s-2m)/(2b);\varphi}.
\end{aligned}
\end{equation}
Recall that the last equality is the definition of the space $\mathcal{Q}^{s-2m,(s-2m)/(2b);\varphi}$. To complete the proof, it remains to apply Lemma~\ref{16lem7.5} for $W=\Omega$ to \eqref{8f38}.
\end{proof}

\begin{remark}\label{16rem8.1}
Let $s\in E$. The space $\mathcal{Q}^{s-2m,(s-2m)/(2b);\varphi}$ defined by formula \eqref{16f10} is
independent of the choice of the number $\varepsilon\in(0,1/2)$ up to equivalence of norms. Indeed, according to Theorem \ref{16th4.1} we have the isomorphism
\begin{equation*}
\Lambda:H^{s,s/(2b);\varphi}(\Omega)\leftrightarrow
\bigl[\mathcal{Q}^{s-\varepsilon-2m,(s-\varepsilon-2m)/(2b);\varphi},
\mathcal{Q}^{s+\varepsilon-2m,(s+\varepsilon-2m)/(2b);\varphi}\bigr]_{1/2}.
\end{equation*}
whenever $0<\varepsilon<1/2$. This directly implies the mentioned independence. Besides, the space $\mathcal{Q}^{s-2m,(s-2m)/(2b);\varphi}$ is embedded continuously in $\mathcal{H}^{s-2m,(s-2m)/(2b);\varphi}$. Indeed, choosing $\varepsilon\in(0,1/2)$, we get the continuous embeddings
\begin{equation*}
\mathcal{Q}^{s\mp\varepsilon-2m,(s\mp\varepsilon-2m)/(2b);\varphi}
\hookrightarrow
\mathcal{H}^{s\mp\varepsilon-2m,(s\mp\varepsilon-2m)/(2b);\varphi}
\end{equation*}
in view of $s\mp\varepsilon\in(0,\sigma_{0})\setminus E$ and the definition of the left-hand space. It follows from this that the embedding operator acts continuously from \eqref{16f10} to \eqref{16f67}, as stated.
\end{remark}

\begin{proof}[Proof of Theorem $\ref{16th4.3}$.]
We will first prove that, under its hypotheses
\eqref{16f13}--\eqref{16f15}, the implication
\begin{equation}\label{16f54}
u\in H^{s-\lambda,(s-\lambda)/(2b);\varphi}_{\mathrm{loc}}
(\Omega_0,\Omega')\;\Longrightarrow\;u\in H^{s-\lambda+1,(s-\lambda+1)/(2b);\varphi}_{\mathrm{loc}}
(\Omega_0,\Omega')
\end{equation}
holds for each integer $\lambda\geq1$ subject to $s-\lambda+1>\sigma_{0}$.

We arbitrarily choose a function $\chi\in C^\infty(\overline\Omega)$ with $\mbox{supp}\,\chi\subset\Omega_0\cup\Omega'$. For $\chi$ there exists a function $\eta\in C^\infty(\overline\Omega)$ such that $\mbox{supp}\,\eta\subset\Omega_0\cup\Omega'$ and
$\eta=1$ in a neighbourhood of $\mbox{supp}\,\chi$. Interchanging each of the differential operators
$A$,  $B_{j}$ and $\partial^k_t$
with the operator of the multiplication by $\chi$, we can write
\begin{equation}\label{16f55}
\begin{aligned}
\Lambda(\chi u)&=\Lambda(\chi\eta u)=\chi\,\Lambda(\eta u)+ \Lambda'(\eta u)\\
&=\chi\,\Lambda u+\Lambda'(\eta u)=\chi\,(f,g_{1},\dots,g_{m},h_{0},\dots,h_{\varkappa-1})+\Lambda'(\eta u).
\end{aligned}
\end{equation}
Here, $\Lambda':=(A',B'_{1},\ldots,B'_{m},C'_{0},\dots C'_{\varkappa-1})$ is an operator whose components act on every function $w(x,t)$ from $H^{\sigma_{0}-1,(\sigma_{0}-1)/(2b)}(\Omega)$ as follows:
\begin{gather}\label{16f56}
A'(x,t,D_x,\partial_t)w(x,t)=\sum_{|\alpha|+2b\beta\leq 2m-1}a^{\alpha,\beta}_{1}(x,t)\,D^\alpha_x\partial^\beta_t w(x,t),\\
B_{j}'(x,t,D_x,\partial_t)w(x,t)=\sum_{|\alpha|+2b\beta\leq m_j-1}
b_{j,1}^{\alpha,\beta}(x,t)\,D^\alpha_x\partial^\beta_t
w(x,t)\!\upharpoonright\!S,\quad j=1,\ldots,m\label{16f57},
\end{gather}
and
\begin{equation}\label{16f58}
C_{0}'w=0,\quad
C_{k}'(x,\partial_t)w=\sum_{l=0}^{k-1}
c_{l,k}(x)\,(\partial^{l}_t w)(x,0),\quad k=1,\ldots,\varkappa-1,
\end{equation}
where all $a^{\alpha,\beta}_{1}\in C^{\infty}(\overline{\Omega})$,
$b_{j,1}^{\alpha,\beta}\in C^{\infty}(\overline{S})$ and $c_{l,\,k}\in C^{\infty}(\overline{G})$.
This operator acts continuously between the spaces
\begin{equation}\label{16f59}
\Lambda':H^{\sigma,\sigma/(2b);\varphi}(\Omega)\rightarrow
\mathcal{H}^{\sigma+1-2m,(\sigma+1-2m)/(2b);\varphi}
\end{equation}
for every $\sigma>\sigma_{0}-1$. In the $\varphi(\cdot)\equiv1$ case, this follows directly from \eqref{16f56}, \eqref{16f57}, \eqref{16f58},
and the known properties of partial differential operators and trace operators on anisotropic Sobolev spaces (see, e.g., \cite[Chapter~I, Lemma~4, and Chapter~II, Theorems~3 and~7]{Slobodetskii58}). Remark only that each operator \eqref{16f58}, with $1\leq k\leq\varkappa-1$, acts continuously between the spaces $H^{\sigma,\sigma/(2b)}(\Omega)$ and $H^{\sigma-2b(k-1)-b}(G)\hookrightarrow H^{\sigma+1-2bk-b}(G)$.
The boundedness of the operator \eqref{16f59} in the general situation is plainly deduced from this case with the help of the Propositions~\ref{8prop4} and~\ref{8prop5} (see also \eqref{16f45} for $s:=\sigma+1$).

Owing to \eqref{16f13}, \eqref{16f14}, and \eqref{16f15}, we obtain the inclusion
$$
\chi\,(f,g_{1},\dots,g_{m},h_0,\dots,h_{\varkappa-1})
\in\mathcal{H}^{s-2m,(s-2m)/(2b);\varphi}.
$$
Besides, according to \eqref{16f59} with $\sigma:=s-\lambda$, we have the implication
\begin{equation*}
u\in H^{s-\lambda,(s-\lambda)/(2b);\varphi}_{\mathrm{loc}}
(\Omega_0,\Omega')
\Longrightarrow\Lambda'(\eta u)\in
\mathcal{H}^{s-\lambda+1-2m,(s-\lambda+1-2m)/(2b);\varphi}.
\end{equation*}
Hence, using \eqref{16f55}, we conclude that
\begin{equation}\label{16f60}
u\in H^{s-\lambda,(s-\lambda)/(2b);\varphi}_{\mathrm{loc}}
(\Omega_0,\Omega')
\Longrightarrow\Lambda(\chi u)\in
\mathcal{H}^{s-\lambda+1-2m,(s-\lambda+1-2m)/(2b);\varphi}.
\end{equation}

To deduce the required property \eqref{16f54} from \eqref{16f60} let us prove that
\begin{equation}\label{16f61}
\Lambda(\chi u)\in
\mathcal{H}^{\sigma-2m,(\sigma-2m)/(2b);\varphi}\Longrightarrow
\Lambda(\chi u)\in\mathcal{Q}^{\sigma-2m,(\sigma-2m)/(2b);\varphi}
\end{equation}
for every $\sigma>\sigma_0$. Assume that the premise of this implication is true for some $\sigma>\sigma_0$. Since $\mathrm{dist}(\mathrm{supp}\,\chi,\Gamma)>0$, we have the equality
$\Lambda(\chi u)=0$ near $\Gamma$. Hence, the vector $\Lambda(\chi u)$ satisfies the compatibility conditions \eqref{16f8} in which $(f,g_{1},\dots,g_{m},h_{0},\dots,h_{\varkappa-1})$ means $\Lambda(\chi u)$ and $\sigma$ is taken instead of $s$. This yields \eqref{16f61} in the $\sigma\notin E$ case due to the definition of the space $\mathcal{Q}^{\sigma-2m,(\sigma-2m)/(2b);\varphi}$.

In the opposite case of $\sigma\in E$, this space is defined by the interpolation. Considering this case, we choose a function $\chi_1\in C^{\infty}(\overline\Omega)$ such that $\chi_1=0$ in a neighbourhood of $\Gamma$ and that $\chi_1=1$ in a neighbourhood of $\mathrm{supp}\,\chi$. The mapping
$M_{\chi_1}:F\mapsto\chi_1 F$ acts continuously between the spaces
\begin{equation}\label{16f63}
M_{\chi_1}:
\mathcal{H}^{\sigma\pm\varepsilon-2m,(\sigma\pm\varepsilon-2m)/(2b);\varphi}
\to\mathcal{Q}^{\sigma\pm\varepsilon-2m,(\sigma\pm\varepsilon-2m)/(2b);\varphi}
\end{equation}
whenever $0<\varepsilon<1/2$ because the vector $\chi_1 F$ satisfies the compatibility conditions \eqref{16f8} in which $(f,g_{1},\dots,g_{m},h_{0},\dots,h_{\varkappa-1})$ means $\chi_1 F$ and the numbers $\sigma\pm\varepsilon\notin E$ are taken instead of~$s$. Applying the interpolation with the number parameter $1/2$ to \eqref{16f63}, we obtain a bounded operator
\begin{equation}\label{16f64}
\begin{aligned}
M_{\chi_1}&:
\bigl[\mathcal{H}^{\sigma-\varepsilon-2m,(\sigma-\varepsilon-2m)/(2b);\varphi},
\mathcal{H}^{\sigma+\varepsilon-2m,(\sigma+\varepsilon-2m)/(2b);\varphi}\bigr]_{1/2}
\\
&\to\bigl[\mathcal{Q}^{\sigma-\varepsilon-2m,(\sigma-\varepsilon-2m)/(2b);\varphi},
\mathcal{Q}^{\sigma+\varepsilon-2m,(\sigma+\varepsilon-2m)/(2b);\varphi}\bigr]_{1/2}.
\end{aligned}
\end{equation}
According to the interpolation formulas \eqref{16f67} and
\eqref{16f10}, this operator acts between the spaces
\begin{equation}\label{16f65}
M_{\chi_1}:\mathcal{H}^{\sigma-2m,(\sigma-2m)/(2b);\varphi}\to
\mathcal{Q}^{\sigma-2m,(\sigma-2m)/(2b);\varphi}.
\end{equation}
Owing to our choice of $\chi_1$, we have $\chi_1\Lambda(\chi u)=\Lambda(\chi u)$. Since $\Lambda(\chi u)\in
\mathcal{H}^{\sigma-2m,(\sigma-2m)/(2b);\varphi}$ by our assumption, we conclude
that
$$
\Lambda(\chi u)=\chi_1\Lambda(\chi u)\in \mathcal{Q}^{\sigma-2m,(\sigma-2m)/(2b);\varphi}
$$
due to \eqref{16f65}. Thus, the implication \eqref{16f61} is proved.

Now, using properties \eqref{16f60}, \eqref{16f61} with $\sigma:=s-\lambda+1$,
and Corollary~\ref{16cor4.2}, we conclude that
\begin{align*}
u\in H^{s-\lambda,(s-\lambda)/(2b);\varphi}_{\mathrm{loc}}
(\Omega_0,\Omega')
&\Longrightarrow\Lambda(\chi u)\in
\mathcal{H}^{s-\lambda+1-2m,(s-\lambda+1-2m)/(2b);\varphi}\\
&\Longrightarrow \Lambda(\chi u)\in
\mathcal{Q}^{s-\lambda+1-2m,(s-\lambda+1-2m)/(2b);\varphi}\\
&\Longrightarrow \chi u\in
H^{s-\lambda+1,(s-\lambda+1)/(2b);\varphi}(\Omega)
\end{align*}
for every $\chi\in C^\infty(\overline\Omega)$ subject to  $\mbox{supp}\,\chi\subset\Omega_0\cup\Omega'$. Note that Corollary~\ref{16cor4.2} is applicable here because $\chi u\in H^{\sigma_0,\sigma_0/(2b)}(\Omega)$ by the hypothesis of the theorem and because $s-\lambda+1>\sigma_{0}$. Thus, we have proved the required implication \eqref{16f54}.

Let us use this implication to prove the theorem, i.e. to show that $u\in H^{s,s/(2b);\varphi}_{\mathrm{loc}}(\Omega_0,\Omega')$. We separately examine the case of
$s\notin\mathbb{Z}$ and the case of $s\in\mathbb{Z}$.

Consider first the case of  $s\notin\mathbb{Z}$.
In this case, there exists an integer $\lambda_{0}\geq1$ such that
\begin{equation}\label{16f66}
s-\lambda_{0}<\sigma_{0}<s-\lambda_{0}+1.
\end{equation}
Using the implication \eqref{16f54} successively for
$\lambda:=\lambda_{0}$,
$\lambda:=\lambda_{0}-1$,..., $\lambda:=1$, we conclude that
\begin{align*}
u&\in H^{\sigma_0,\sigma_0/(2b)}(\Omega)\subset
H^{s-\lambda_{0},(s-\lambda_{0})/(2b);\varphi}_{\mathrm{loc}}(\Omega_0,\Omega')\\
&\Longrightarrow u\in
H^{s-\lambda_{0}+1,(s-\lambda_{0}+1)/(2b);\varphi}_{\mathrm{loc}}(\Omega_0,\Omega')
\Longrightarrow\ldots\Longrightarrow
u\in H^{s,s/(2b);\varphi}_{\mathrm{loc}}(\Omega_0,\Omega').
\end{align*}
Note that $u\in H^{\sigma_0,\sigma_0/(2b)}(\Omega)$ by the hypothesis of the theorem.

Consider now the case of $s\in\mathbb{Z}$.
In this case, there is no integer $\lambda_{0}$ that satisfies~\eqref{16f66}.
Nevertheless, since $s-\varepsilon\notin\mathbb{Z}$ and $s-\varepsilon>\sigma_{0}$ whenever $0<\varepsilon<1$, the inclusion $u\in H^{s-\varepsilon,(s-\varepsilon)/(2b);\varphi}_{\mathrm{loc}}(\Omega_0,\Omega')$
holds true as we have just proved. Hence,
using \eqref{16f54} with $\lambda:=1$, we conclude that
\begin{align*}
u\in H^{s-\varepsilon,(s-\varepsilon)/(2b);\varphi}_{\mathrm{loc}}(\Omega_0,\Omega')\subset
H^{s-1,(s-1)/(2b);\varphi}_{\mathrm{loc}}(\Omega_0,\Omega')
\Longrightarrow u\in H^{s,s/(2b);\varphi}_{\mathrm{loc}}(\Omega_0,\Omega').
\end{align*}
\end{proof}

\begin{proof}[Proof of Theorem $\ref{16th4.4}$.]
Choosing a sufficiently small number $\varepsilon>0$, we put $U_{\varepsilon}:=\{x\in U:\mathrm{dist}(x,\partial U)>\varepsilon\}$, $\Omega_{\varepsilon}:=U_{\varepsilon}\cap\Omega$, and $\Omega'_{\varepsilon}:=U_{\varepsilon}\cap\partial\overline{\Omega}$. Consider a function $\chi_{\varepsilon}\in C^\infty(\overline\Omega)$ such that $\mbox{supp}\,\chi_{\varepsilon}\subset\Omega_0\cup\Omega'$ and that $\chi_{\varepsilon}=1$ on $\Omega_{\varepsilon}\cup\Omega'_{\varepsilon}$. Owing to Theorem~\ref{16th4.3}, we have the inclusion $\chi_{\varepsilon}u\in H^{s,s/(2b);\varphi}(\Omega)$ were $s=p+b+n/2$ and $\varphi$ satisfies condition \eqref{9f4.7}. Hence, there exists a distribution $w_{\varepsilon}\in H^{s,s/(2b);\varphi}(\mathbb{R}^{n+1})$ such that $w_{\varepsilon}=\chi_{\varepsilon}u=u$ on $\Omega_{\varepsilon}$. Let the indices $\alpha=(\alpha_{1},\ldots,\alpha_{n})$ and $\beta$ satisfy the condition $|\alpha|+2b\beta\leq p$. Then, according to  \cite[Lemma~8.1(i)]{LosMikhailetsMurach17CPAA}, the generalized partial derivative $D_{x}^{\alpha}\partial_{t}^{\beta}w_{\varepsilon}(x,t)$ is continuous on $\mathbb{R}^{n+1}$. Hence, the distribution  $v(x,t):=D_{x}^{\alpha}\partial_{t}^{\beta}u(x,t)$ in $\Omega$ is continuous on $\Omega_{\varepsilon}\cup\Omega'_{\varepsilon}$; i.e.,
\begin{equation*}
v(\omega)=\int\limits_{\Omega_\varepsilon}v_{\varepsilon}(x,t)\,
\omega(x,t)\,dxdt
\end{equation*}
for every test function $\omega\in C^{\infty}(\Omega)$ subject to $\mathrm{supp}\,\omega\subset\Omega_\varepsilon$, with $v_{\varepsilon}$ denoting the continuous function $v_{\varepsilon}(x,t):=
D_{x}^{\alpha}\partial_{t}^{\beta}w_{\varepsilon}(x,t)$ of $(x,t)\in\Omega_{\varepsilon}\cup\Omega'_{\varepsilon}$. We define the continuous function $v_{0}$ on $\Omega_{0}\cup\Omega'_{0}$ by the formula $v_{0}:=v_{\varepsilon}$ on $\Omega_{\varepsilon}\cup\Omega'_{\varepsilon}$ whenever $0<\varepsilon\ll1$. This function is well defined because $0<\delta<\varepsilon$ implies that $v_{\delta}=v_{\varepsilon}$ on $\Omega_{\varepsilon}\cup\Omega'_{\varepsilon}$. Then $v$ satisfies \eqref{rem-to-th4.4} for every test function $\omega\in C^{\infty}(\Omega)$ with $\mathrm{supp}\,\omega\subset\Omega_0$ because
$\mathrm{supp}\,\omega\subset\Omega_\varepsilon$ for a sufficiently small number $\varepsilon>0$ depending on $\omega$.
\end{proof}

Ending this section, we  substantiate Remark~\ref{16rem4.5}. Let $\varphi\in\mathcal{M}$, and let an integer $p\geq0$ be subject to the condition  $s:=p+b+n/2>\sigma_{0}$. Assume that every function $u\in\nobreak H^{\sigma_0,\sigma_0/(2b)}(\Omega)$ satisfies \eqref{f-rem4.5} and show that $\varphi$ then satisfies \eqref{9f4.7}. Let $V$ be a nonempty open subset of $\mathbb{R}^{n+1}$ such that $\overline{V}\subset\Omega_0$. We arbitrarily choose a function $w\in H^{s,s/(2b);\varphi}(\mathbb{R}^{n+1})$ such that $\mathrm{supp}\,w\subset V$. Put $u:=w\!\upharpoonright\!\Omega\in H^{s,s/(2b);\varphi}(\Omega)$ and
\begin{equation*}
(f,g_{1},...,g_{m},h_0,\dots,h_{\varkappa-1}):=\Lambda u\in
\mathcal{H}^{s-2m,(s-2m)/(2b);\varphi}.
\end{equation*}
The function $u$ satisfies the premise of the implication \eqref{f-rem4.5}. Hence,   $u$ satisfies the conclusion of Theorem~\ref{16th4.4} due to our assumption. Thus, the generalized derivative $D_{x}^{\alpha}\partial_{t}^{\beta}u(x,t)$ is continuous on $\Omega_0\cup\Omega'$ whenever $|\alpha|+2b\beta\leq p$. Specifically, each derivative $D_{1}^{j}u$, with $0\leq j\leq p$, is continuous on $V$. Therefore, each derivative $D_{1}^{j}w$, with $0\leq j\leq p$, is continuous on $\mathbb{R}^{n+1}$. Hence, $\varphi$ satisfies \eqref{9f4.7} due to \cite[Lemma~8.1(ii)]{LosMikhailetsMurach17CPAA}.  Remark~\ref{16rem4.5} is substantiated.

\section*{Appendix}

Along with the explicit compatibility conditions \eqref{16f8}, other (and less explicit) forms of them are often used in the theory of general parabolic initial-boundary value problems (see, e.g., \cite{AgranovichVishik64, LionsMagenes72ii, ZhitarashuEidelman98}). Our proof of Theorem~\ref{16th4.1} is based on the isomorphism theorem obtained in works \cite{Zhitarashu85, ZhitarashuEidelman98}. They refer to the compatibility conditions introduced in \cite[\S\,11]{AgranovichVishik64}. These conditions are equivalent to \eqref{16f8} on some assumptions about $s$, which is considered known. But we have not found the proof of this fact in the literature. Therefore, we prefer to give the proof for the sake of completeness of the presentation.

Let us formulate the compatibility conditions given in  \cite[\S\,11]{AgranovichVishik64}. They use some function spaces, which we introduce now.
Let $V$ be an open nonempty subset of $\mathbb{R}^{k}$, with $2\leq k\in\mathbb{Z}$, and let $s>0$.
The linear space $H^{s,s/(2b)}_{+}(V)$ is defined to consist of the restrictions
$u=w\!\upharpoonright\!V$ of all functions $w\in H^{s,s/(2b)}(\mathbb{R}^{k})$ which
vanish whenever $t<0$, we considering $w$ as a function $w(x,t)$ of
$x=(x_{1},\ldots,x_{k-1})\in\mathbb{R}^{k-1}$ and $t\in\mathbb{R}$.
The space is endowed with the norm
\begin{align*}
&\|u\|_{H^{s,s/(2b)}_{+}(V)} \\
&:=\inf\bigl\{\|w\|_{H^{s,s/(2b)}(\mathbb{R}^{k})}:\,w\in
H^{s,s/(2b)}(\mathbb{R}^{k}),\;w(x,t)=0\;\,\mbox{if}\;\,t<0,
\;u=w\!\upharpoonright\!V\bigr\}.
\end{align*}
We need this Hilbert space in the case were $V=\Omega$ and $k=n+1$ or
in the case were $V=\Pi$ and $k=n$, with $\Pi:=\mathbb{R}^{n-1}\times(0,\tau)$.
Changing $H^{s,s\gamma;\varphi}(\Pi)$ for $H^{s,s\gamma}_{+}(\Pi)$ in the definition of $H^{s,s\gamma;\varphi}(S)$, we define the Hilbert space $H^{s,s\gamma}_{+}(S)$. This space does not depend up to equivalence of norms on our choice of local charts and partition of unity on $\Gamma$ \cite[Lemma~3.1]{LosMikhailetsMurach17CPAA}.

Assume that $s\geq\sigma_0$, $s\notin E$, and $s/(2b)+1/2\notin\mathbb{Z}$.
A vector
\begin{equation}\label{vector-F}
F:=\bigl(f,g_1,...,g_m,h_0,...,h_{\varkappa-1}\bigr)\in
\mathcal{H}^{s-2m,(s-2m)/(2b)}
\end{equation}
satisfies the compatibility conditions in the sense of \cite[\S\,11]{AgranovichVishik64} if there exists a function
$v=v(x,t)$ from
$H^{s,s/(2b)}(\Omega)$ such that
\begin{gather}\label{16f73}
f-Av\in H^{s-2m,(s-2m)/(2b)}_{+}(\Omega), \\ \label{16f74}
g_j-B_jv\in H^{s-m_j-1/2,(s-m_j-1/2)/(2b)}_{+}(S)
\quad\mbox{for each}\quad j\in\{1,\dots,m\},\\ \label{16f75}
h_k=\partial^{k}_{t}v\big|_{t=0}\quad\mbox{for each}\quad
k\in\{0,\dots,\varkappa-1\}.
\end{gather}
(Note that $s+1/2\notin\mathbb{Z}\Rightarrow s\notin E$. Hence, the restrictions put on $s_{j}$ at the beginning of our proof of Theorem~\ref{16th4.1} and caused by the use of \cite[Theorem~9.1]{Zhitarashu85} are somewhat stronger than the assumptions just made about $s$.)

Let us prove that the collection of these compatibility conditions is
equivalent to \eqref{16f8} for every vector \eqref{vector-F}. Owing to
\cite[Lemma 5.1]{LosMikhailetsMurach17CPAA}, we rewrite conditions
\eqref{16f73} and \eqref{16f74} in the following equivalent form:
\begin{equation}\label{16f76}
\begin{aligned}
\partial^l_t&(f-Av)(x,0)=0\quad\mbox{for almost all}\quad x\in G\\
&\mbox{whenever}\quad l\in\mathbb{Z}
\quad\mbox{and}\quad 0\leq l<(s-2m)/(2b)-1/2,
\end{aligned}
\end{equation}
and
\begin{equation}\label{16f77}
\begin{aligned}
\partial^k_t &(g_j-B_jv)(x,0)=0\;\;\mbox{for almost all}\;\; x\in \Gamma
\;\;\mbox{and for each}\;\;j\in\{1,\dots,m\}\\
&\mbox{whenever}\;\; k\in\mathbb{Z}
\;\;\mbox{and}\;\; 0\leq k<(s-m_j-1/2)/(2b)-1/2.
\end{aligned}
\end{equation}
The collection of conditions \eqref{16f75} and \eqref{16f76} is equivalent to \eqref{16f9} where $0\leq k<s/(2b)-1/2$ provided that we put
\begin{equation}\label{functions-v}
v_k(x)=(\partial^{k}_{t}v)(x,0)\;\;\mbox{for almost all}\;\;x\in G\;\;
\mbox{whenever}\;\;0\leq k<s/(2b)-1/2.
\end{equation}
Indeed, note first that \eqref{16f76} is equivalent to the collection of conditions
\begin{align*}
\partial^{k-\varkappa}_t&((a^{(0,\ldots,0),\varkappa})^{-1}(f-Av))(x,0)=0
\quad\mbox{for almost all}\quad x\in G\\
&\mbox{whenever}\quad k\in\mathbb{Z}
\quad\mbox{and}\quad \varkappa\leq k<s/(2b)-1/2.
\end{align*}
If $\varkappa\leq k<s/(2b)-1/2$, then
\begin{equation*}
-\biggl(\partial^{k-\varkappa}_{t}
\frac{Av}{a^{(0,\ldots,0),\varkappa}}\biggr)(x,0)=
\sum_{|\alpha|+2b\beta\leq 2m}\,
\sum\limits_{q=0}^{k-\varkappa}
\binom{k-\varkappa}{q}(\partial^{k-\varkappa-q}_t
a_{0}^{\alpha,\beta})(x,0)\,D^\alpha_{x}v_{\beta+q}(x)
\end{equation*}
for almost all $x\in G$. Note that $v_{k}$ presents only in the summand corresponding to $\nobreak{\alpha=(0,\ldots,0)}$, $\beta=\varkappa$, and $q=k-\varkappa$ and that this summand equals $-v_{k}$. Hence,
\begin{align*}
v_{k}(x)=\biggl(&\partial^{k-\varkappa}_{t}
\frac{Av}{a^{(0,\ldots,0),\varkappa}}\biggr)(x,0)\\
+&\sum_{\substack{|\alpha|+2b\beta\leq 2m,\\ \beta\leq\varkappa-1}}\,
\sum\limits_{q=0}^{k-\varkappa}
\binom{k-\varkappa}{q}(\partial^{k-\varkappa-q}_t
a_{0}^{\alpha,\beta})(x,0)\,D^\alpha_{x}v_{\beta+q}(x).
\end{align*}
It is evident now that \eqref{16f76} is equivalent to the collection of relations \eqref{16f9} where $\varkappa\leq k<s/(2b)-1/2$. This gives the required equivalence of \eqref{16f75} and \eqref{16f76} to \eqref{16f9}.

Suppose now that a vector \eqref{vector-F} satisfies the compatibility conditions \eqref{16f8} in which $v_{k}$ and $B_{j,k}$ are defined by \eqref{16f9} and \eqref{16f9B}. Let us define a vector
$$
V=(v_0,...,v_r)\in \bigoplus_{k=0}^{r}H^{s-2bk-b}(G),
$$
by formulas \eqref{16f9}, with $r:=[s/(2b)-1/2]$.
Owing to \cite[Chapter~2, Theorem~10]{Slobodetskii58} there exists
a function $v\in H^{s,s/(2b)}(\Omega)$ that satisfies \eqref{functions-v}. It follows from this and \eqref{16f2} and \eqref{16f9B} that
\begin{equation}\label{last-formula}
\begin{aligned}
\partial^k_{t}&B_jv\big|_{t=0}=B_{j,k}(v_0,\dots,v_{[m_j/(2b)]+k})
\quad\mbox{on}\;\;G\\
&\mbox{for each}\;\;j\in\{1,\dots,m\}\;\;\mbox{and}\;\;k\in\mathbb{Z}
\;\;\mbox{such that}\;\;0\leq k<\frac{s-m_j-1/2-b}{2b}.
\end{aligned}
\end{equation}
Hence, \eqref{16f8} implies \eqref{16f77}. Besides, \eqref{16f9} implies \eqref{16f75} and \eqref{16f76}, as we have mentioned. Thus, the vector \eqref{vector-F} satisfies conditions \eqref{16f73}--\eqref{16f75}.

Conversely, suppose that a vector \eqref{vector-F} satisfies relations \eqref{16f73}--\eqref{16f75} for a certain function $v\in H^{s,s/(2b)}(\Omega)$. This implies \eqref{16f9} and \eqref{16f77} provided that we define the functions $v_k$ by \eqref{functions-v}. Then the compatibility conditions \eqref{16f8} follow from \eqref{16f9} and \eqref{16f77} in view of \eqref{last-formula}.

\end{document}